\newtheorem{theorem}{Theorem}[section]
\newtheorem{lemma}[theorem]{Lemma}
\newtheorem{proposition}[theorem]{Proposition}
\newtheorem{corollary}[theorem]{Corollary}
\theoremstyle{definition}
\newtheorem*{definition}{Definition}
\newtheorem*{remark}{Remark}
\title[BMO embeddings, chord-arc curves, Riemann mapping parametrization]
{BMO embeddings, chord-arc curves, and \\Riemann mapping parametrization}
\author[H. Wei]{Huaying Wei} 
\address{Department of Mathematics and Statistics, Jiangsu Normal University \endgraf Xuzhou 221116, PR China} 
\curraddr{Department of Mathematics, School of Education, Waseda University \endgraf
Shinjuku, Tokyo 169-8050, Japan}
\email{hywei@jsnu.edu.cn} 
\author[K. Matsuzaki]{Katsuhiko Matsuzaki}
\address{Department of Mathematics, School of Education, Waseda University \endgraf
Shinjuku, Tokyo 169-8050, Japan}
\email{matsuzak@waseda.jp}
\subjclass[2020]{Primary 32G15, 30C62, 30H35; Secondary 42A45, 26A46, 46G20}
\keywords{BMO function, BMO Teichm\"uller space, Beurling--Ahlfors extension, Carleson measure,
strongly symmetric homeomorphism, chord-arc curve, $A_\infty$-weight, topological group, composition operator}
\thanks{Research supported by 
Japan Society for the Promotion of Science (KAKENHI 18H01125 and 21F20027).}
\begin{document}

\maketitle

\begin{abstract}
We consider the space of chord-arc curves on the plane passing through the infinity with 
their parametrization $\gamma$ on the real line,
and embed this space into the product of the BMO Teichm\"uller spaces. The fundamental theorem we prove about
this representation is that $\log \gamma'$ also gives a biholomorphic homeomorphism 
into the complex Banach space of BMO functions. Using these two equivalent complex structures,
we develop a clear exposition on the analytic dependence of involved mappings between certain subspaces. Especially, we
examine the parametrization of a chord-arc curve by using the Riemann mapping and
its dependence on the arc-length parametrization. As a consequence, we can solve a conjecture of
Katznelson, Nag, and Sullivan in 1990
by showing that this dependence is not continuous.
\end{abstract}

\section{Introduction}

A quasicircle $\Gamma$ is the image of the real line $\mathbb R$ by a quasiconformal homeomorphism of
the complex plane $\mathbb C$. (In this paper, we always assume that a closed curve passes through the infinity.)
The family of all such quasicircles modulo affine translation is identified with the universal Teichm\"uller space $T$.
However, if we consider $\Gamma$ with its parametrization, namely, if we consider an embedding
$\mathbb R \to \mathbb C$ that is induced by a quasiconformal homeomorphism of $\mathbb C$,
the family of all such normalized (i.e., $0$, $1$, and $\infty$ are fixed) embeddings 
is identified with the product of the Teichm\"uller spaces $T(\mathbb U) \times T(\mathbb L)$ 
defined on the upper and the lower
half-planes. This representation has been used to investigate
quasifuchsian spaces by the Bers simultaneous uniformization.

We apply this method to consider chord-arc curves. A chord-arc curve $\Gamma$ is 
the image of $\mathbb R$ by a bi-Lipschitz homeomorphism of
$\mathbb C$. The family of all such chord-arc curves modulo affine translation
is identified with a proper subset of the BMO Teichm\"uller space $T_b$
introduced by Astala and Zinsmeister \cite{AZ}. To give a parametrization for $\Gamma$,
we introduce normalized BMO embeddings of $\mathbb R$ into $\mathbb C$ whose totality is identified with
the product $T_b(\mathbb U) \times T_b(\mathbb L)$. 
The subset of all normalized BMO embeddings whose images are chord-arc curves is denoted by $\rm CA$.
Every $\gamma \in {\rm CA}$ is locally absolutely continuous and $\log \gamma'$
belongs to the complex Banach space ${\rm BMO}(\mathbb R)$ of all complex-valued BMO functions on $\mathbb R$
modulo constant functions.
In this paper, we prove the following fundamental result on $\rm CA$ from the view point of quasiconformal Teichm\"uller theory
(see Theorem \ref{biholo}).
Differently from the case of quasicircles, the better regularity of chord-arc curves makes it possible 
for $\rm CA$ to have another representation of its complex structure in the complex Banach space ${\rm BMO}(\mathbb R)$.

\begin{theorem}\label{fundamental}
$\rm CA$ is an open subset of $T_b(\mathbb U) \times T_b(\mathbb L)$, and the map $L:{\rm CA} \to {\rm BMO}(\mathbb R)$ defined by
$L(\gamma)=\log \gamma'$ is a biholomorphic homeomorphism onto its image.
\end{theorem}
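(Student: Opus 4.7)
The plan is to prove all three assertions of the theorem---openness of $\mathrm{CA}$, biholomorphy of $L$, and the homeomorphism onto image---simultaneously by constructing a holomorphic local inverse to $L$ at every point and invoking the holomorphic inverse function theorem in the Banach-space setting. First, I would show that $L$ is well-defined and holomorphic on $\mathrm{CA}$. Fix a base point $\gamma_0 \in \mathrm{CA}$ and use the chart of $T_b(\mathbb{U})\times T_b(\mathbb{L})$ at $\gamma_0$ provided by pairs of Beltrami coefficients $(\mu,\nu)$ in the Carleson class. The corresponding normalized BMO embedding $\gamma$ is produced by solving the Beltrami equation on each half-plane and gluing along $\mathbb{R}$, and $\gamma'$ admits an expression (up to a global constant) in terms of the boundary derivatives of the Riemann maps $h_\pm$ of the complementary chord-arc domains of $\gamma(\mathbb{R})$ together with a welding factor. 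Each ingredient depends holomorphically on the Beltrami coefficients; moreover $\log|h_\pm'|\in\mathrm{BMO}(\mathbb{R})$ by the classical $A_\infty$ theory of chord-arc domains, and the welding factor contributes a BMO term by strong symmetry. Hence $L(\gamma)=\log\gamma'$ takes values in $\mathrm{BMO}(\mathbb{R})$ and depends holomorphically on $(\mu,\nu)$. Injectivity of $L$ is immediate: if $\log\gamma_1'\equiv\log\gamma_2'\pmod{\mathbb{C}}$, then $\gamma_1'/\gamma_2'$ is a nonzero constant, and $\gamma_i(0)=0$, $\gamma_i(1)=1$ forces $\gamma_1=\gamma_2$.

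Second, I would construct a local holomorphic inverse to $L$ near $b_0=L(\gamma_0)$. Given $b$ in a small BMO-neighborhood of $b_0$, define $\tilde\gamma(x)=c\int_0^x e^{b(t)}\,dt$ with $c$ chosen so that $\tilde\gamma(1)=1$. Two analytic facts are essential: for $b$ sufficiently close to $b_0$, the exponential $e^{\mathrm{Re}\,b}$ is an $A_\infty$ weight on $\mathbb{R}$ by John--Nirenberg, which together with BMO control on $\mathrm{Im}\,b$ yields that $\tilde\gamma(\mathbb{R})$ is a chord-arc curve; and $\tilde\gamma$ admits a canonical bi-Lipschitz extension to $\mathbb{C}$---for instance via the Riemann maps of its two complementary domains---whose Beltrami coefficients $(\mu_b,\nu_b)$ lie in the Carleson class and so represent a point of $T_b(\mathbb{U})\times T_b(\mathbb{L})$. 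Showing that the assignment $b\mapsto(\mu_b,\nu_b)$ is holomorphic (not merely continuous) realizes the inverse of $L$.

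The main obstacle will be precisely this last verification: promoting the continuous dependence $b\mapsto(\mu_b,\nu_b)$ to holomorphic dependence in the respective Banach norms requires fine control of the Riemann maps of the complementary chord-arc domains as a function of the boundary parametrization, exploiting exactly the extra regularity that distinguishes chord-arc curves from general quasicircles. Once holomorphy is in place, the holomorphic inverse function theorem shows $L$ is locally biholomorphic at each $\gamma_0$; combined with injectivity and the local inverses constructed above, this yields the biholomorphism of $L$ onto its image together with the openness of $\mathrm{CA}$ in $T_b(\mathbb{U})\times T_b(\mathbb{L})$.
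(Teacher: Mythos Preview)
Your overall strategy---prove $L$ holomorphic, then produce a holomorphic local inverse at each point and invoke the inverse function theorem---is exactly the paper's strategy. The gap is in how you build that local inverse.

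You propose to extend $\tilde\gamma$ to $\mathbb{C}$ ``via the Riemann maps of its two complementary domains'' and then read off the Beltrami coefficients. Concretely this means writing $\tilde\gamma=h_+\circ f_+$ with $h_+$ the Riemann-mapping parametrization and $f_+\in\mathrm{SQS}$, extending $f_+$ by Beurling--Ahlfors, and taking the resulting complex dilatation. The trouble is that the assignment $\tilde\gamma\mapsto h_+$ is the map $\Phi$ of the paper, and Proposition~\ref{continuity} (proved later, but independent of the theorem at hand) shows $\Phi$ is \emph{not continuous} on $\mathrm{BE}$. So the very decomposition you rely on does not vary well with $b$; you have correctly flagged this as ``the main obstacle'' but have not supplied a way around it, and the Riemann-map route does not obviously furnish one. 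There is also a circularity earlier: the claim that $e^{\mathrm{Re}\,b}\in A_\infty$ plus BMO control on $\mathrm{Im}\,b$ forces $\tilde\gamma(\mathbb{R})$ to be chord-arc presupposes that $\tilde\gamma$ is already a BMO embedding (Theorem~\ref{Ainfty} goes in that direction), which is part of what you must establish; and John--Nirenberg alone only gives $A_\infty$ for \emph{small} BMO norm, whereas openness of $\mathrm{BMO}_{\mathbb R}^*(\mathbb R)$ at a general $b_0$ needs the extra $A_\infty$ input of Proposition~\ref{convex}.

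The paper avoids the Riemann-map detour entirely. It first builds the local holomorphic inverse only at \emph{arc-length} points $w=iv\in i\mathrm{BMO}_{\mathbb R}(\mathbb R)^\circ$, using Semmes's explicit quasiconformal extension \cite[Proposition~4.13]{Se}: there the extension $G_w$ (and hence its complex dilatation $\mu_w$) is written directly as an integral operator in $w$, so continuity and G\^ateaux holomorphy of $w\mapsto\mu_w$ can be checked by hand, with no reference to Riemann maps. To reach a general point $u+iv'\in L(\mathrm{CA})$, the paper does not repeat the construction but transports it: one finds $iv$ with $Q_u(iv)=u+iv'$ (Theorem~\ref{bijective}), and then the intertwining identity $L\circ\widetilde R_{[\nu]}=Q_u\circ L$ of Proposition~\ref{Lequation} turns the local inverse $\Psi_{iv}^*$ at $iv$ into the local inverse $\widetilde R_{[\nu]}\circ\Psi_{iv}^*\circ Q_u^{-1}$ at $u+iv'$. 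The group of right translations $\widetilde R_{[\nu]}$ on $\mathrm{BE}$ and the affine automorphisms $Q_u$ on $\mathrm{BMO}(\mathbb R)$ are each biholomorphic (Lemma~\ref{auto}, Proposition~\ref{Lequation}), so holomorphy is preserved under this transport. That two-step mechanism---explicit Semmes extension at the imaginary slice, then conjugation by commuting biholomorphic automorphisms---is the missing idea in your proposal.
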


For the study of plane curves which have a nature of problems in harmonic analysis,
this point of view has missed until recently we begin to do researches for Weil--Petersson curves (see \cite{WM-4}). 
In this paper, we will show that the above theorem greatly simplify and clarify the arguments concerning
chord-arc curves by giving simple proofs for existing important results and
also by answering open problems in this subject matter.

A quasisymmetric homeomorphism $f:\mathbb R \to \mathbb R$ is the extension of a quasiconformal homeomorphism of
$\mathbb U$ onto itself. If $f$ is locally absolutely continuous and its derivative $f'$ is an $A_\infty$-weight
of Muckenhoupt, then $f$ is called strongly quasisymmetric. The group of 
all normalized strongly quasisymmetric homeomorphism on $\mathbb R$ is
denoted by ${\rm SQS}$, which can be identified with the BMO Teichm\"uller space $T_b$.
Let ${\rm BMO}_{\mathbb R}^*(\mathbb R)$ denote a convex open subset of 
the real Banach space of all real-valued BMO functions $u$ such that $e^u$ is an $A_\infty$-weight.
Then, for every $f \in {\rm SQS}$, $\log f'$ belongs to ${\rm BMO}_{\mathbb R}^*(\mathbb R)$ by definition,
and in fact, this correspondence $\Psi:{\rm SQS} \cong T_b \to {\rm BMO}_{\mathbb R}^*(\mathbb R)$ is bijective.
Moreover, it is known that $\Psi$ is a homeomorphism. 
However, in our formulation, we can apply a simpler argument to this map $\Psi$ to obtain a stronger result.
Since ${\rm SQS}$ is a real-analytic submanifold of ${\rm CA}$ corresponding to the diagonal locus of 
$T_b(\mathbb U) \times T_b(\mathbb L)$
and $\Psi$ is the restriction of $L$ to ${\rm SQS}$,
Theorem \ref{fundamental} implies the following assertion (see Corollary \ref{real-analytic}).
This gives an independent argument for an expected claim by Fan, Hu and Shen \cite[Remark 4.4]{FHS}.

\begin{corollary}\label{1.2}
$\Psi:{\rm SQS} \cong T_b \to {\rm BMO}_{\mathbb R}^*(\mathbb R)$ is a real-analytic homeomorphism whose
inverse $\Psi^{-1}$ is also real-analytic.
\end{corollary}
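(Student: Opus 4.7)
The plan is to deduce Corollary~\ref{1.2} from Theorem~\ref{fundamental} by realizing ${\rm SQS}$ as a totally real analytic submanifold of the complex Banach manifold ${\rm CA}$ and restricting the biholomorphism $L$ to it.

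First, I would make precise how ${\rm SQS}$ embeds in ${\rm CA}$. A normalized $f\in {\rm SQS}$ is a BMO embedding $\mathbb R\hookrightarrow\mathbb R\subset\mathbb C$; its quasiconformal extension to $\mathbb U$ and the reflected extension to $\mathbb L$ produce two Teichm\"uller coordinates $(\tau_1,\tau_2)\in T_b(\mathbb U)\times T_b(\mathbb L)$ interchanged by the antiholomorphic involution $\sigma(\tau_1,\tau_2)=(\bar\tau_2,\bar\tau_1)$ induced by complex conjugation. Hence ${\rm SQS}$ corresponds to ${\rm CA}\cap\mathrm{Fix}(\sigma)$, and the fixed locus of an antiholomorphic involution on a complex Banach manifold carries a natural real-analytic submanifold structure, totally real of half the complex dimension.

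Next, since $L:{\rm CA}\to{\rm BMO}(\mathbb R)$ is holomorphic by Theorem~\ref{fundamental}, the restriction $\Psi=L|_{{\rm SQS}}$ is real-analytic as a map between real-analytic Banach manifolds. For $f\in {\rm SQS}$ one has $f'>0$ a.e.\ and $f'\in A_\infty$, so $\log f'\in{\rm BMO}_{\mathbb R}^*(\mathbb R)$. Conversely, for $u\in{\rm BMO}_{\mathbb R}^*(\mathbb R)$ the weight $e^u\in A_\infty$ integrates (with normalization fixing $0,1,\infty$) to an $f\in{\rm SQS}$ with $\log f'=u$ modulo constants; combined with the injectivity of $L$ from Theorem~\ref{fundamental}, this shows $\Psi$ is a bijection onto ${\rm BMO}_{\mathbb R}^*(\mathbb R)$.

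Finally, the real-analyticity of $\Psi^{-1}$ follows by restricting the holomorphic inverse $L^{-1}:L({\rm CA})\to{\rm CA}$ to the open subset ${\rm BMO}_{\mathbb R}^*(\mathbb R)$ of the real Banach space ${\rm BMO}_{\mathbb R}(\mathbb R)\subset{\rm BMO}(\mathbb R)$; this restriction is automatically real-analytic and, by the bijection just established, takes values in ${\rm SQS}$. The main obstacle is justifying the real-analytic submanifold structure on ${\rm SQS}$ and checking that $dL$ carries the tangent space to the diagonal isomorphically onto ${\rm BMO}_{\mathbb R}(\mathbb R)$; this amounts to showing that the real part of $\log\gamma'$ parametrizes the diagonal locus, which is a complexification argument that follows because $L$ is a local biholomorphism and real-valuedness of $\log\gamma'$ is equivalent to $\gamma(\mathbb R)\subset\mathbb R$.
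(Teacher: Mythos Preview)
Your approach is correct and matches the paper's: both deduce the corollary by restricting the biholomorphism $L:{\rm CA}\to{\rm BMO}(\mathbb R)$ of Theorem~\ref{fundamental} to the real-analytic submanifold ${\rm SQS}\subset{\rm CA}$, identified with the diagonal of $T_b(\mathbb U)\times T_b(\mathbb L)$, and observing that $L({\rm SQS})={\rm BMO}_{\mathbb R}^*(\mathbb R)$.

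The one difference is how the real-analytic submanifold structure on ${\rm SQS}$ is obtained. You use the fixed-point set of the antiholomorphic involution $\sigma$, which is fine, but then in your final paragraph you worry about compatibility of tangent spaces under $dL$. The paper bypasses this entirely by taking the opposite viewpoint: it simply notes ${\rm SQS}=L^{-1}({\rm BMO}_{\mathbb R}^*(\mathbb R))$ (see the line before Corollary~\ref{real-analytic}), so the real-analytic structure on ${\rm SQS}$ is \emph{defined} as the $L$-pullback of the open set ${\rm BMO}_{\mathbb R}^*(\mathbb R)\subset{\rm BMO}_{\mathbb R}(\mathbb R)$. With that definition, the real-analyticity of $\Psi=L|_{\rm SQS}$ and $\Psi^{-1}=L^{-1}|_{{\rm BMO}_{\mathbb R}^*(\mathbb R)}$ is tautological, and the ``obstacle'' you flag disappears. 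Your involution argument then becomes the (separate, easy) observation that this pullback structure agrees with the diagonal description in the Bers coordinates.
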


The importance of this result lies in a fact that we can convert the real analytic dependence upon the BMO norm
to that upon the analytic structure of the Teichm\"uller space. This is the translation of harmonic analysis aspects into
complex analysis aspects. For instance, the tangent space of ${\rm SQS}$ can be described by the solution of
a certain time dependent flow equation on $\mathbb R$ but the dependence of the solution should be given by the
BMO norm (see \cite{WS}). Since the tangent space of $T_b$ is represented by Beltrami differentials in the complex analytic theory,
we need a translation of the relation ${\rm SQS} \cong T_b$ in the level of tangent spaces.
Corollary \ref{1.2} is useful for this.

Next, we consider a problem on the dependence of Riemann mapping parametrization of
chord-arc curves. For each chord-arc curve $\Gamma$, the normalized Riemann mapping
from $\mathbb U$ to the left domain bounded by $\Gamma$ defines a parametrization of $\Gamma$ by
its extension to $\mathbb R$.
Let ${\rm RM}^\circ$ be a subset of ${\rm CA}$ consisting of all such Riemann mapping parametrizations of chord-arc curves.
This is a complex-analytic submanifold of $\rm CA$.
Let $\Phi:{\rm CA} \to {\rm RM}^\circ$ be defined by taking the Riemann mapping parametrization in the same chord-arc image. 
Associated with $\Phi$, another map $\Pi:{\rm CA} \to {\rm SQS}$ for $\gamma \in {\rm CA}$ 
is defined by the reparamatrization 
$\gamma$ from $\Phi(\gamma)$ by a strongly quasisymmetric homeomorphism $\Pi(\gamma)$.
Namely, there is a unique decomposition $\gamma=\Phi(\gamma) \circ \Pi(\gamma)$.
This map $\Pi$ can be regarded as the projection onto the real-analytic submanifold ${\rm SQS}$, and hence
it is real-analytic. 

On the other hand, since a chord-arc curve $\Gamma$ is rectifiable, it has the arc-length parametrization.
Let ${\rm ICA}$ be a subset of $\rm CA$ consisting of arc-length parametrizations of chord-arc curves.
This is the inverse image of some open subset $\Omega$ in
the real Banach subspace $i{\rm BMO}_{\mathbb R}(\mathbb R)$ of purely imaginary BMO functions   
by $L$, and hence ${\rm ICA}$ is a real-analytic submanifold of ${\rm CA}$.
The problems we will consider are concerning the maps $\Pi$ and $\Phi$ restricted to ${\rm ICA}$.
By the identification of ${\rm ICA}$ with $\Omega \subset i{\rm BMO}_{\mathbb R}(\mathbb R)$
(which is denoted by $i{\rm BMO}_{\mathbb R}(\mathbb R)^\circ$ later) 
and also by ${\rm SQS} \cong {\rm BMO}_{\mathbb R}^*(\mathbb R)$, 
we can define a map $\lambda:\Omega \to {\rm BMO}_{\mathbb R}^*(\mathbb R)$
as the conjugate of $\Pi|_{\rm ICA}$ by $L$. Properties of this $\lambda$ were intensively investigated and 
their arguments were developed further in the literature. Among them, one of the important results Coifman and Meyer \cite{CM}
obtained is that $\lambda$ is real-analytic. See also Semmes \cite{SeB} and Wu \cite{Wu}.
In our formulation, since $\Pi|_{\rm ICA}$ is merely the projection from
the real-analytic submanifold $\rm ICA$, we see that this is an immediate consequence from the fact that
$L$ is biholomorphic (see Theorem\ref{CM}).

\begin{corollary}\label{1.3}
$\lambda=L \circ \Pi|_{\rm ICA} \circ L^{-1}:\Omega \to {\rm BMO}_{\mathbb R}^*(\mathbb R)$ is real-analytic.
\end{corollary}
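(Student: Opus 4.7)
The plan is to realize $\lambda$ as a composition of three real-analytic maps and invoke Theorem \ref{fundamental} to transfer the real-analytic submanifold structures of ${\rm SQS}$ and ${\rm ICA}$ across the biholomorphism $L$.

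First I would note that since $L:{\rm CA}\to L({\rm CA})\subset {\rm BMO}(\mathbb R)$ is a biholomorphism (Theorem \ref{fundamental}) and ${\rm ICA}=L^{-1}(\Omega)$ with $\Omega$ an open subset of the real Banach subspace $i{\rm BMO}_{\mathbb R}(\mathbb R)$, the restriction $L^{-1}|_\Omega:\Omega\to{\rm ICA}$ is a real-analytic diffeomorphism onto the real-analytic submanifold ${\rm ICA}$. Second, as remarked in the discussion preceding the statement, the map $\Pi:{\rm CA}\to{\rm SQS}$ determined by the decomposition $\gamma=\Phi(\gamma)\circ\Pi(\gamma)$ is real-analytic because it is the projection onto the diagonal locus ${\rm SQS}\subset T_b(\mathbb U)\times T_b(\mathbb L)$; hence its restriction $\Pi|_{\rm ICA}:{\rm ICA}\to{\rm SQS}$ is real-analytic as well. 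Third, by Corollary \ref{1.2}, $\Psi=L|_{\rm SQS}:{\rm SQS}\to{\rm BMO}_{\mathbb R}^*(\mathbb R)$ is a real-analytic homeomorphism. Composing these three maps gives $\lambda=\Psi\circ\Pi|_{\rm ICA}\circ L^{-1}|_\Omega$ as a chain of real-analytic maps between (subsets of) real Banach spaces, and therefore real-analytic.

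I do not anticipate any substantive obstacle: the analytic content has been entirely absorbed into Theorem \ref{fundamental} and Corollary \ref{1.2}. The only subtlety worth verifying explicitly is the consistency of the two possible notions of real-analyticity on the submanifolds ${\rm SQS}$ and ${\rm ICA}$ --- the \emph{intrinsic} one inherited from the ambient complex Banach manifold ${\rm CA}\subset T_b(\mathbb U)\times T_b(\mathbb L)$, and the \emph{extrinsic} one obtained from their identification with ${\rm BMO}_{\mathbb R}^*(\mathbb R)$ and $\Omega\subset i{\rm BMO}_{\mathbb R}(\mathbb R)$ through $L$. These coincide precisely because $L$ is biholomorphic, hence bi-real-analytic, so the three factors genuinely compose in the real-analytic category and the conclusion follows immediately.
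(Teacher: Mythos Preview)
Your proof is correct and follows essentially the same route as the paper: the paper's argument (given as Theorem~\ref{CM}) notes that ${\rm ICA}$ is a real-analytic submanifold by Corollary~\ref{real-analytic}, that $\Pi|_{\rm ICA}$ is real-analytic as the restriction of the real-analytic projection $\Pi$, and that conjugation by the biholomorphism $L$ preserves real-analyticity. Your explicit decomposition into three factors and your remark on the compatibility of the intrinsic and extrinsic real-analytic structures are a slightly more careful unpacking of exactly this reasoning.
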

 
We also consider $\Phi|_{\rm ICA}$. This map indicates how
the Riemann mapping varies according to the change of the chord-arc curves $\Gamma$.
By the composition of
$\Pi^*$ identifying ${\rm RM}$ with the Teichm\"uller space $T_b \cong \rm SQS$,
we also obtain the correspondence from ${\rm ICA}$ to conformal welding homeomorphisms in $\rm SQS$.
The map $\rho:\Omega \to {\rm BMO}_{\mathbb R}^*(\mathbb R)$ given as the conjugate of $\Pi^* \circ \Phi|_{\rm ICA}$ by $L$
was considered in
Katznelson, Nag and Sullivan \cite{KNS} where they mentioned that $\rho$ was not known to be continuous, and
stated several preferable consequences ``if this were continuous''. On the contrary, Shen and Wu \cite{SW}
proved that the corresponding mapping to $\rho$ is continuous in a similar setting of Weil--Petersson curves. This can be also
explained simply from another fundamental theorem for the space of Weil--Petersson curves (see \cite{WM-4}).
Our conclusion for this problem in this paper is:

\begin{theorem}\label{1.4}
$\Phi|_{\rm ICA}:{\rm ICA} \to {\rm RM}^\circ$ and $\rho=L \circ (\Pi^* \circ \Phi|_{\rm ICA}) \circ L^{-1}:\Omega \to {\rm BMO}_{\mathbb R}^*(\mathbb R)$ are not continuous.
\end{theorem}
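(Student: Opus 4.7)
\emph{Strategy.} The plan is to argue by contradiction. By Theorem~\ref{fundamental}, $L$ is a biholomorphic homeomorphism, and via the identification $\Pi^{*}:{\rm RM}^\circ\cong T_b\cong{\rm SQS}$ (together with Corollary~\ref{1.2}), continuity of $\Phi|_{\rm ICA}$ is equivalent to continuity of $\rho$; so both assertions will be handled simultaneously. It therefore suffices to produce a sequence $\gamma_n\in{\rm ICA}$ with $L(\gamma_n)\to 0$ in ${\rm BMO}(\mathbb R)$ for which $L(\Phi(\gamma_n))\not\to 0$ in ${\rm BMO}(\mathbb R)$.

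\emph{Reduction.} Differentiating the defining decomposition $\gamma=\Phi(\gamma)\circ\Pi(\gamma)$ and taking logarithms gives
\[
L(\Phi(\gamma))\circ\Pi(\gamma)=L(\gamma)-\log\Pi(\gamma)'.
\]
When $L(\gamma_n)\to 0$, Corollary~\ref{1.3} (real-analyticity of $\lambda$, hence of $\Pi|_{\rm ICA}$) forces $\log\Pi(\gamma_n)'\to 0$ in ${\rm BMO}(\mathbb R)$, and consequently $L(\Phi(\gamma_n))\circ\Pi(\gamma_n)\to 0$ as well. The discontinuity of $\Phi|_{\rm ICA}$ thus reduces to the failure of the precomposition operator $u\mapsto u\circ\Pi(\gamma_n)^{-1}$ on ${\rm BMO}(\mathbb R)$ to preserve norm convergence to zero when the strongly quasisymmetric parameter $\Pi(\gamma_n)$ itself converges to the identity. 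For each fixed strongly quasisymmetric $h$ this operator is a bounded automorphism of ${\rm BMO}(\mathbb R)$, but its operator norm is not uniformly controlled as $h$ tends to the identity, and this is the gap that must be exploited.

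\emph{Counterexample and main obstacle.} We construct $\Gamma_n$ as a superposition along $\mathbb R$ of rescaled, translated copies of a fixed nontrivial model chord-arc piece $\Gamma_{\star}$, arranged on a lacunary family of scales with carefully tuned shrinking amplitudes. A direct rescaling estimate shows that the arc-length angle $\theta_n=-iL(\gamma_n)$ has ${\rm BMO}$-norm bounded by the maximal local amplitude, hence $\theta_n\to 0$ in ${\rm BMO}$. By contrast, the Riemann mapping of the left domain of $\Gamma_n$ records each local piece through the $A_\infty$-signature of the corresponding harmonic measure, a nonlinear quantity that does not cancel when one sums over disjoint scales. A Carleson-measure / $A_\infty$-weight computation (the only substantive technical step) then yields $\|L(\Phi(\gamma_n))\|_{\rm BMO}\geq c>0$ uniformly, contradicting the assumed continuity at the identity. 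The hard part of the argument is precisely this quantitative separation between the arc-length BMO norm and the welding BMO norm for the family $\Gamma_n$; the algebraic reduction and the continuity of $L$ and $\Pi$ which transport the contradiction back to a statement about $\Phi|_{\rm ICA}$ and $\rho$ are direct consequences of Theorem~\ref{fundamental} and Corollary~\ref{1.3}.
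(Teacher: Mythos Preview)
Your proposal has a fundamental flaw in the choice of the point at which you attempt to exhibit discontinuity. You try to construct a sequence $\gamma_n\in{\rm ICA}$ with $L(\gamma_n)\to 0$ (i.e., $\gamma_n\to{\rm id}$) such that $L(\Phi(\gamma_n))\not\to 0$. But $\Phi|_{\rm ICA}$ is \emph{continuous} at the origin: by Proposition~\ref{nearid}, $T_b$ is a partial topological group, so if $\gamma_n=([\mu_1^n],[\mu_2^n])\to([0],[0])$ then $[\mu_1^n]^{-1}\to[0]$ and $[\mu_2^n]\ast[\mu_1^n]^{-1}\to[0]$, hence $\Phi(\gamma_n)\to([0],[0])$. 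Your own reduction confirms this: once $L(\Phi(\gamma_n))\circ\Pi(\gamma_n)\to 0$ and $\Pi(\gamma_n)\to{\rm id}$, Proposition~\ref{uniformPh} gives a uniform bound $\Vert P_{\Pi(\gamma_n)^{-1}}\Vert\le C_0$, forcing $L(\Phi(\gamma_n))=P_{\Pi(\gamma_n)^{-1}}\bigl(L(\Phi(\gamma_n))\circ\Pi(\gamma_n)\bigr)\to 0$. The gap you describe (composition operators not preserving norm convergence) does exist, but it cannot be exploited at the origin; it only manifests when composing by a \emph{fixed nontrivial} element.

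The paper's proof works at a point \emph{near but not at} the origin. It first shows (Proposition~\ref{continuity}, via an explicit piecewise-linear construction showing $T_b$ is not a topological group) that $\Phi$ is discontinuous at some $([0],[\nu])\in{\rm RM}^\circ$ with $[\nu]$ arbitrarily close to $[0]$. It then pulls this back to ${\rm ICA}$ using the crucial fact (Theorem~\ref{inverse}) that $\Pi|_{\rm ICA}$ is a local \emph{homeomorphism} at the origin (because $d_0\lambda=-iT$ for the Hilbert transform $T$). This allows one to manufacture a sequence in ${\rm ICA}$ converging to the preimage $([\mu_1],[\mu_2])$ of $([0],[\nu])$ whose first Bers coordinate is a \emph{prescribed} perturbation $[\varepsilon_n]\ast[\mu_1]$, chosen so that $[\nu]\ast[\varepsilon_n]^{-1}\not\to[\nu]$. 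Your lacunary-superposition construction does not see this structure and, as written, cannot succeed.
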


See Theorem \ref{main} and Corollary \ref{another}. 
This is proved by using a fact that $T_b \cong {\rm SQS}$ is not a topological group.
This property implies that $\Phi$ is not continuous on ${\rm CA}$,
but in order to apply this on ${\rm ICA}$, we have to investigate a local property of $\lambda$ at the origin.
By using a fact that the derivative of $\lambda$ at the origin is given by the Hilbert transformation on
${\rm BMO}_{\mathbb R}(\mathbb R)$, we see that $\lambda$ is a local homeomorphism at the origin.
This gives us the freedom of choosing elements in ${\rm ICA}$ that yields a particular example showing the discontinuity of $\Phi|_{\rm ICA}$.

\medskip
Here is a brief introduction of the contents in this paper. Section 2 is a collection of preliminary results used later. It contains
BMO functions, $A_\infty$-weights, Carleson measures, and their roles in the quasiconformal theory of Teichm\"uller spaces.
In Section 3, we introduce BMO embeddings of the real line 
in general and give a rather complete survey on the results
concerning such embeddings whose images are chord-arc curves. Sections 4--8 are the main body of the new
contribution of this paper. The aforementioned Bers coordinates by the BMO Teichm\"uller spaces
for the total space of BMO embeddings
are introduced in Section 4. The mappings $\Phi$ and $\Pi$ are also defined explicitly here by using the
Bers coordinates. From Section 5, we focus on the space ${\rm CA}$ consisting of
BMO embeddings with the chord-arc images. First, curve theoretical representations of an element of ${\rm CA}$,
arc-length parametrization and change of parameter, are discussed, and the coordinates of $L({\rm CA})$ are given
in the space of BMO functions. Theorem \ref{fundamental} and Corollary \ref{1.2} are proved in Section 6.
Properties of the biholomorphic mapping $L$ are also discussed.
Section 7 is devoted to the setup for the problem on the Riemann mapping parametrization, and
Corollary \ref{1.3} is presented there. A more important result on the real-analyticity of the inverse of
$\lambda$ is also explained. Finally in Section 8, we prove our main achievement in this paper,
Theorem \ref{1.4}. Questions about the discontinuity of certain related mappings are also answered.

\section{Preliminaries}
\subsection{BMO functions and $A_\infty$-weights}
A locally integrable complex-valued function $u$ on $\mathbb R$ is of {\it BMO} if
$$
\Vert u \Vert_*=\sup_{I \subset \mathbb R}\frac{1}{|I|} \int_I |u(x)-u_I| dx <\infty,
$$
where the supremum is taken over all bounded intervals $I$ on $\mathbb R$ and $u_I$ denotes the integral mean of $u$
over $I$. The set of all complex-valued BMO functions on $\mathbb R$ is denoted by ${\rm BMO}(\mathbb R)$.
This is regarded as a Banach space with norm $\Vert \cdot \Vert_*$
by ignoring the difference of complex constant functions.
The {\it John--Nirenberg inequality} for BMO functions (see \cite[VI.2]{Ga}) 
asserts that
there exists two universal positive constants $C_0$ and $C_{JN}$ such that for any complex-valued BMO function $u$, 
any bounded interval $I$ of $\mathbb{R}$, and any $\lambda > 0$, it holds that
\begin{equation*}\label{JN}
\frac{1}{|I|} |\{t \in I: |u(t) - u_I| \geq \lambda \}| \leq C_0 \exp\left(\frac{-C_{JN}\lambda}{\Vert u \Vert_*} \right).
\end{equation*}

A locally integrable non-negative measurable function $\omega \geq 0$ on $\mathbb R$ 
is called a weight. We say that $\omega$ is
an $A_p$-weight 
of Muckenhoupt \cite{M} for $p>1$ 
if there exists a constant $C_p(\omega) \geq 1$ such that
\begin{equation*}\label{Ap}
\left(\frac{1}{|I|} \int_I \omega(x)dx \right)\left(\frac{1}{|I|} \int_{I} \left(\frac{1}{\omega(x)}\right)^{\frac{1}{p-1}}dx\right)^{p-1}
\leq C_p(\omega)
\end{equation*}
for any bounded interval $I \subset \mathbb R$. 
We define $\omega$ to be an {\it $A_\infty$-weight} if $\omega$ is an
$A_p$-weight for some $p>1$, that is, $A_\infty=\bigcup_{p>1} A_p$.
It is known that $\omega$ is an $A_\infty$-weight if and only if
there are positive constants $\alpha(\omega)$, $K(\omega)>0$ such that  
\begin{equation}\label{SD}
\frac{\int_E \omega(x)dx}{\int_I \omega(x)dx}\leq K(\omega)\left(\frac{|E|}{|I|}\right)^{\alpha(\omega)}
\end{equation}
for any bounded interval $I \subset \mathbb{R}$ and 
for any measurable subset $E \subset I$ (see \cite[Theorem V]{CF},
\cite[Lemma VI.6.11]{Ga}). We also define $\omega$ to be an $A_1$-weight if there exists a constant $C_1(\omega) \geq 1$ such that
$$
\frac{1}{|I|} \int_I \omega(x)dx  \leq C_1(\omega)\, {\rm ess}\!\!\! \inf_{x \in I \quad}\!\!\! \omega(x)
$$
for any bounded interval $I \subset \mathbb R$. It holds $A_1 \subset A_p$ for every $p>1$.

Another characterization of $A_\infty$-weights can be given by 
the inverse Jensen inequality. Namely,
$\omega \geq 0$
belongs to the class of $A_\infty$-weights 
if and only if there exists a constant $C_\infty(\omega) \geq 1$ such that
\begin{equation*}\label{iff}
\frac{1}{|I|} \int_I \omega(x) dx \leq C_\infty(\omega) \exp \left(\frac{1}{|I|} \int_I \log \omega(x) dx \right) 
\end{equation*}
for every bounded interval $I \subset \mathbb R$ (see \cite[Theorem IV.2.15]{GR} and \cite{Hr}). 
We see that if $\omega$ is an $A_\infty$-weight on $\mathbb R$, then
$\log \omega$ belongs to ${\rm BMO}_{\mathbb R}(\mathbb R)$
which is the real subspace of ${\rm BMO}(\mathbb R)$ consisting of all
real-valued BMO functions, 
and conversely, we know the following fact
(see \cite[p.409]{GR} and \cite[Lemma VI.6.5]{Ga}).

\begin{proposition}\label{C_0}
Suppose that a weight $\omega \geq 0$ satisfies $\log \omega \in {\rm BMO}_{\mathbb R}(\mathbb R)$.
If the BMO norm $\Vert \log \omega \Vert_*$ 
is less than the constant $C_{JN}$, then $\omega$ is an $A_\infty$-weight. 
\end{proposition}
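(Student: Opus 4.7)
The plan is to verify the inverse Jensen characterization of $A_\infty$-weights recalled just above the statement. Write $u=\log\omega$ and $u_I=\frac{1}{|I|}\int_I u\,dx$ for a bounded interval $I\subset\mathbb R$. It suffices to find a constant $C$, depending only on the John--Nirenberg constants $C_0,C_{JN}$ and on $\Vert u\Vert_*$, such that
$$
\frac{1}{|I|}\int_I e^{u(x)-u_I}\,dx \leq C
$$
uniformly in $I$, because then $\frac{1}{|I|}\int_I \omega\,dx\leq C\exp(u_I)$, which is exactly the inverse Jensen inequality.

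The estimate uses the crude bound $e^{u-u_I}\leq e^{|u-u_I|}$ followed by the layer-cake formula
$$
\frac{1}{|I|}\int_I e^{|u(x)-u_I|}\,dx = 1 + \int_0^\infty \frac{|\{x\in I : |u(x)-u_I|>\lambda\}|}{|I|}\,e^\lambda\,d\lambda.
$$
Inserting the John--Nirenberg bound $\frac{1}{|I|}|\{|u-u_I|>\lambda\}|\leq C_0\exp(-C_{JN}\lambda/\Vert u\Vert_*)$ reduces everything to the convergence of
$$
\int_0^\infty \exp\!\left(\left(1-\frac{C_{JN}}{\Vert u\Vert_*}\right)\lambda\right)d\lambda.
$$

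This is precisely where the hypothesis $\Vert\log\omega\Vert_*<C_{JN}$ enters: it forces the exponent to be strictly negative, so the integral converges, and one reads off the explicit bound
$$
\frac{1}{|I|}\int_I e^{u(x)-u_I}\,dx \leq 1 + \frac{C_0\Vert u\Vert_*}{C_{JN}-\Vert u\Vert_*},
$$
which is independent of $I$. Applying the inverse Jensen characterization then yields $\omega\in A_\infty$. I do not anticipate a real obstacle: the argument is essentially a one-step application of John--Nirenberg via the layer-cake formula, and the hypothesis is used only to promote the local exponential integrability available to every BMO function to a uniform bound on every interval.
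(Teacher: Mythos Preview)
Your argument is correct: the layer-cake rewriting of $\frac{1}{|I|}\int_I e^{|u-u_I|}\,dx$ together with the John--Nirenberg inequality yields exactly the uniform bound you state, and this is the inverse Jensen condition characterizing $A_\infty$. The paper itself does not supply a proof but only cites \cite[p.409]{GR} and \cite[Lemma VI.6.5]{Ga}; the argument you give is precisely the standard one found in those references.
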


There is an example of $u \in {\rm BMO}_{\mathbb R}(\mathbb R)$ such that $e^u$ is not an $A_\infty$-weight.
Let ${\rm BMO}_{\mathbb R}^*(\mathbb R)$ denote the proper subset 
of the real subspace ${\rm BMO}_{\mathbb R}(\mathbb R)$ consisting of all real-valued BMO functions $u$
with $e^u$ being an $A_\infty$-weight. 

\begin{proposition}\label{convex}
${\rm BMO}_{\mathbb R}^*(\mathbb R)$ is a convex open subset of ${\rm BMO}_{\mathbb R}(\mathbb R)$.
\end{proposition}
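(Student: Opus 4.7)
I would work throughout with the reformulation of the $A_\infty$ condition supplied by the inverse Jensen inequality recalled just above Proposition \ref{C_0}: setting
$$N(u) := \sup_{I} \frac{1}{|I|}\int_I e^{u - u_I}\,dx,$$
one has $u \in \mathrm{BMO}_{\mathbb{R}}^*(\mathbb{R})$ if and only if $N(u) < \infty$ (since $e^{u_I} = \exp(|I|^{-1}\int_I \log e^u\,dx)$, this is literally the inverse Jensen characterization of $e^u \in A_\infty$). Both convexity and openness will be reduced to Hölder-type estimates on this single functional.

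\textbf{Convexity.} For $u_0, u_1 \in \mathrm{BMO}_{\mathbb{R}}^*(\mathbb{R})$, $t \in [0,1]$, and $u = (1-t) u_0 + t u_1$, averaging gives $u_I = (1-t)(u_0)_I + t(u_1)_I$, hence $u - u_I = (1-t)(u_0 - (u_0)_I) + t(u_1 - (u_1)_I)$. Hölder's inequality with conjugate exponents $1/(1-t)$ and $1/t$ then yields
$$\frac{1}{|I|}\int_I e^{u-u_I}\,dx \le \left(\frac{1}{|I|}\int_I e^{u_0 - (u_0)_I}\,dx\right)^{1-t} \left(\frac{1}{|I|}\int_I e^{u_1 - (u_1)_I}\,dx\right)^{t} \le N(u_0)^{1-t} N(u_1)^t,$$
so $N(u) < \infty$ and $u \in \mathrm{BMO}_{\mathbb{R}}^*(\mathbb{R})$.

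\textbf{Openness.} Fix $u_0 \in \mathrm{BMO}_{\mathbb{R}}^*(\mathbb{R})$. The self-improving property of $A_\infty$-weights (the reverse Hölder inequality: $e^{u_0} \in A_p$ implies $(e^{u_0})^{1+\eta} \in A_p$ for some $\eta>0$) furnishes some $s > 1$ with $e^{s u_0} \in A_\infty$, equivalently $N(s u_0) < \infty$. Let $s' = s/(s-1)$ and pick $\varepsilon = C_{JN}/s'$. For $v \in \mathrm{BMO}_{\mathbb{R}}(\mathbb{R})$ with $\|v\|_* < \varepsilon$, one has $\|s' v\|_* < C_{JN}$, so Proposition \ref{C_0} applied to $e^{s'v}$ (or, equivalently, a direct integration of the John--Nirenberg inequality) produces a uniform bound on $|I|^{-1}\int_I e^{s'(v - v_I)}\,dx$. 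Writing $u = u_0 + v$ and applying Hölder with exponents $s$ and $s'$ to $e^{u - u_I} = e^{u_0 - (u_0)_I}\, e^{v - v_I}$ gives
$$\frac{1}{|I|}\int_I e^{u-u_I}\,dx \le \left(\frac{1}{|I|}\int_I e^{s(u_0 - (u_0)_I)}\,dx\right)^{1/s}\left(\frac{1}{|I|}\int_I e^{s'(v - v_I)}\,dx\right)^{1/s'},$$
which is bounded uniformly in $I$. Thus the open ball of radius $\varepsilon$ around $u_0$ lies inside $\mathrm{BMO}_{\mathbb{R}}^*(\mathbb{R})$.

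\textbf{Main obstacle.} The naive attempt — use Proposition \ref{C_0} to declare $e^v \in A_\infty$ for small $\|v\|_*$ and multiply by $e^{u_0}$ — fails because $A_\infty$ is not closed under products. The critical ingredient that unlocks openness is precisely the self-improvement $e^{s u_0} \in A_\infty$ for some $s>1$, which opens the Hölder slack needed to absorb a BMO perturbation of positive but small norm; without this slack, one cannot close the estimate.
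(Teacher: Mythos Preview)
Your proof is correct and takes a genuinely different route from the paper. For convexity, the paper invokes the Jones factorization theorem to reduce to the $A_1$ case, whereas you argue directly via H\"older's inequality on the inverse Jensen functional $N(u)$; your argument is more elementary and avoids the factorization machinery entirely. For openness, both arguments rest on the same two ingredients---the self-improving property $e^{u_0}\in A_\infty \Rightarrow e^{su_0}\in A_\infty$ for some $s>1$, and Proposition~\ref{C_0} (small BMO norm forces $A_\infty$)---but combine them differently: the paper first proves convexity and then describes $\mathrm{BMO}_{\mathbb R}^*(\mathbb R)$ as a union of open cones with vertices at its points, spanned by the $C_{JN}$-ball about the origin, while you bypass the cone picture by running H\"older with exponents $s,s'$ directly on $e^{u-u_I}=e^{u_0-(u_0)_I}e^{v-v_I}$. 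Your approach is cleaner and more quantitative (it gives an explicit radius $\varepsilon=C_{JN}/s'$); the paper's cone argument, on the other hand, yields a more geometric description of the set.
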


\begin{proof}
To show the convexity, we use the following property of $A_\infty$-weight:
if $\omega_1$ and $\omega_2$ are $A_\infty$-weights, then $\omega_1^s\omega_2^t$ is also
an $A_\infty$-weight for $s,t \geq 0$ with $s+t=1$. This can be proved by decomposing these $A_p$-weights 
for some $p>1$ into
the product of $A_1$-weights by the Jones factorization theorem (see \cite[Corollary IV.5.3]{GR}) 
and then verifying the same claim for $A_1$-weights.
As another property of $A_\infty$-weight, we know that if $\omega$ is an $A_\infty$-weight, then
there is some $\varepsilon >0$ such that $\omega^r$ is an $A_\infty$-weight for every $r \in [0,1+\varepsilon)$
(see \cite[Theorem IV.2.7]{GR}). 
Combining these properties with the fact in Proposition \ref{C_0} that the open neighborhood of the origin of 
${\rm BMO}_{\mathbb R}(\mathbb R)$
within $C_{JN}$ is contained in ${\rm BMO}_{\mathbb R}^*(\mathbb R)$, we can 
prove that ${\rm BMO}_{\mathbb R}^*(\mathbb R)$ is open. Indeed, ${\rm BMO}_{\mathbb R}^*(\mathbb R)$ is the union of
open cones spanned by the $C_{JN}$-neighborhood of the origin
having any points of ${\rm BMO}_{\mathbb R}^*(\mathbb R)$ as their vertices. 
\end{proof}

\subsection{Strongly quasisymmetric homeomorphisms} 
A quasisymmetric homeomorphism $f:\mathbb R \to \mathbb R$ is called {\it strongly quasisymmetric} if
it is locally absolutely continuous and the derivative $f'$ is an $A_\infty$-weight.
In this case, $\log f' \in {\rm BMO}_{\mathbb R}^*(\mathbb R)$. 
Conversely, for any $u \in {\rm BMO}_{\mathbb R}^*(\mathbb R)$, the indefinite integral
$$
f_u(x) = \frac{\int_0^x e^{u(t)}dt}{\int_0^1 e^{u(t)}dt}
$$
defines a strongly quasisymmetric homeomorphism $f_u$ on $\mathbb R$ that fixes $0$, $1$ and $\infty$.
We denote the set of all
strongly quasisymmetric homeomorphisms of $\mathbb R$ onto itself with this normalization by ${\rm SQS}$.
Hence, ${\rm SQS}$ and ${\rm BMO}_{\mathbb R}^*(\mathbb R)$ correspond bijectively.
By (\ref{SD}), $f$ is a strongly quasisymmetric homeomorphism if and only if there are constants $K$ and $\alpha$ such that
\begin{equation}\label{alphaK}
\frac{|f(E)|}{|f(I)|}\leq K\left(\frac{|E|}{|I|}\right)^{\alpha}
\end{equation}
for any bounded interval $I \subset \mathbb{R}$ and 
for any measurable subset $E \subset I$. From this property, we see that ${\rm SQS}$ is a group
under the composition. 

A strongly quasisymmetric homeomorphism $f:\mathbb R \to \mathbb R$ can be also characterized by the operator on the Banach space of
BMO functions $u$. The pre-composition of $f$ to $u$ gives a change of the parameter, and we consider
this linear operator on ${\rm BMO}(\mathbb R)$ induced by $f$.

\begin{theorem}[\cite{Jo}]\label{pullback}
The increasing homeomorphism $f$ from $\mathbb R$ onto itself is strongly quasisymmetric if and only if 
the composition operator $P_f: u \mapsto u\circ f$ gives an isomorphism of ${\rm BMO}(\mathbb R)$,
that is, $P_f$ and $(P_f)^{-1}$ are bounded linear operators. 
\end{theorem}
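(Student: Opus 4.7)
The plan is to prove the two implications separately, with the forward direction being a direct computation and the converse requiring a test-function argument.

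For the direction $f \in \mathrm{SQS} \Rightarrow P_f$ is a bounded isomorphism of $\mathrm{BMO}(\mathbb R)$, I would fix $u \in \mathrm{BMO}(\mathbb R)$ and a bounded interval $I \subset \mathbb R$, setting $J := f(I)$ (still an interval, since $f$ is a monotone homeomorphism) and $c := u_J$. The John--Nirenberg inequality applied to $u$ on $J$ yields the exponential bound $|\{y \in J : |u(y)-c|>\lambda\}|/|J| \leq C_0 e^{-C_{JN}\lambda/\|u\|_*}$. Transferring this via $f$, the super-level set on $I$ is $E_\lambda = f^{-1}(\{y \in J: |u(y)-c|>\lambda\})$, and since $\mathrm{SQS}$ is closed under inverse, condition (\ref{alphaK}) for $f^{-1}$ gives $|E_\lambda|/|I| \leq K(|f(E_\lambda)|/|J|)^{\alpha}$. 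Combining the two and integrating in $\lambda$,
\[
\frac{1}{|I|}\int_I |u(f(x)) - c|\,dx = \int_0^{\infty}\frac{|E_\lambda|}{|I|}\,d\lambda \leq K C_0^{\alpha}\int_0^{\infty} e^{-\alpha C_{JN}\lambda/\|u\|_*}\,d\lambda \lesssim \|u\|_*,
\]
with constant depending only on the $A_{\infty}$ parameters of $f^{-1}$. Taking the supremum over $I$ (and noting that replacing $c$ by the true mean $(u\circ f)_I$ at worst doubles the norm) shows $P_f$ is bounded on $\mathrm{BMO}(\mathbb R)$; applying the same reasoning to $f^{-1}$ yields the bounded inverse $P_{f^{-1}} = (P_f)^{-1}$.

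For the converse, assume $P_f$ is a bounded invertible operator on $\mathrm{BMO}(\mathbb R)$; we need to produce local absolute continuity of $f$ together with the $A_{\infty}$ condition on $f'$. I would proceed in two stages. First, testing $P_f$ and $(P_f)^{-1}$ on the two-parameter family $u_{a,b}(x) = \log(|x-a|/|x-b|)$, whose BMO norms are bounded by a universal constant independent of $a,b$, converts the operator-norm inequality $\|u_{a,b}\circ f\|_* \leq \|P_f\|\,\|u_{a,b}\|_*$ into a uniform oscillation bound that forces the classical quasisymmetry inequality $|f(x+t)-f(x)| \asymp |f(x)-f(x-t)|$. Second, to pass from quasisymmetry to the $A_{\infty}$ condition, I would test $P_f$ against normalized logarithmic weights on pairs $E \subset J$: for instance, $u = \log(|J|/|E|)\,\chi_{J\setminus E}$ (adjusted modulo constants) has BMO seminorm controlled by a universal quantity, and the BMO bound for $u \circ f$ encodes precisely the comparison between $|f^{-1}(E)|/|f^{-1}(J)|$ and $|E|/|J|$. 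Running this over all such pairs recovers the $A_{\infty}$ inequality (\ref{SD}) for the distributional derivative of $f$, and absolute continuity follows from the resulting doubling estimates.

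The principal obstacle is this converse direction, namely translating the operator-theoretic hypothesis (boundedness of $P_f$ on $\mathrm{BMO}$, a global statement) into the pointwise $A_{\infty}$ condition on $f'$ (a scale-invariant geometric statement on a weight). The delicate part is choosing test functions sharp enough to extract the pointwise weight information yet simple enough to have explicitly computable BMO norms. An alternative route would avoid the second stage by a bootstrap through Proposition \ref{C_0}: if one can show by a scaling/localization argument that $\|\log f'\|_*$ is controlled on every subinterval and becomes small at small scales, then $f' \in A_{\infty}$ follows directly from the small-norm implication there, combined with the self-improvement property of $A_{\infty}$-weights used in the proof of Proposition \ref{convex}.
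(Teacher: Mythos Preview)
The paper does not prove this theorem; it is quoted from Jones \cite{Jo} as a known result, so there is no ``paper's proof'' to compare against. I will therefore assess your argument on its own merits.

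Your forward direction is correct and is essentially the standard proof: John--Nirenberg on $J=f(I)$ followed by the $A_\infty$ distortion estimate (\ref{alphaK}) for $f^{-1}$ gives the required oscillation bound, and since $\mathrm{SQS}$ is a group the same reasoning handles $P_{f^{-1}}=(P_f)^{-1}$.

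The converse, however, has a real gap in the second stage. Your proposed test function $u=\log(|J|/|E|)\,\chi_{J\setminus E}$ does \emph{not} have BMO seminorm bounded by a universal constant: for a bounded function $c\,\chi_A$ the seminorm can be as large as $c/2$, so here $\Vert u\Vert_*$ is of order $\log(|J|/|E|)$, which blows up exactly in the regime $|E|/|J|\to 0$ where the $A_\infty$ condition carries content. Consequently the inequality $\Vert u\circ f\Vert_*\le \Vert P_f\Vert\,\Vert u\Vert_*$ yields no nontrivial information about the ratio $|f^{-1}(E)|/|f^{-1}(J)|$. Your fallback route through Proposition~\ref{C_0} is also circular as stated: at this point in the argument $f$ is not yet known to be absolutely continuous, so $\log f'$ is not defined, and ``$\Vert\log f'\Vert_*$ small at small scales'' has no meaning.

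What actually works (and is closer to Jones's argument) is to exploit the interplay between $\mathrm{BMO}$ and $A_\infty$ through exponentiation rather than through characteristic-function tests. One shows that boundedness of $P_f$ forces $P_f$ to map $\mathrm{BMO}^*_{\mathbb R}(\mathbb R)$ into itself: if $e^u\in A_\infty$, then for small $t>0$ one has $\Vert t\,u\circ f\Vert_*<C_{JN}$ by boundedness of $P_f$, hence $e^{t\,u\circ f}\in A_\infty$ by Proposition~\ref{C_0}, and one bootstraps to $t=1$ using the self-improvement of $A_\infty$. Applying this to the particular weight $u(x)=\log|x|$ (or localized variants) and using that $A_\infty$ pulls back correctly then yields the $A_\infty$ condition on $f'$. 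The point is that the bridge from the operator bound to the weight condition passes through the small-norm window of Proposition~\ref{C_0}, not through characteristic functions of arbitrary measurable sets.
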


\subsection{Quasiconformal extension and Carleson measure}
Let $M(\mathbb U)$ denote the open unit ball of the Banach space $L^{\infty}(\mathbb U)$
of all essentially bounded measurable functions on $\mathbb U$. An element in $M(\mathbb U)$ is called a
{\it Beltrami coefficient}.
We say that a measure $\lambda$ on $\mathbb U$ is a {\it Carleson measure} if
$$
\Vert \lambda \Vert_c=\sup_{I \subset \mathbb R} \frac{\lambda(I \times (0,|I|))}{|I|} <\infty,
$$
where the supremum is taken over all bounded intervals $I$ in $\mathbb R$. 
For $\mu \in L^\infty(\mathbb U)$, we set
$\lambda_\mu=|\mu(z)|^2dxdy/y$ and define
$\Vert \mu \Vert_c=\Vert \lambda_\mu \Vert_c^{1/2}$.
Then, we introduce a new norm $\Vert \mu \Vert_{\infty}+\Vert \mu \Vert_{c}$ for $\mu$.
Let $\mathcal{L}(\mathbb U)$ denote a linear subspace of $L^{\infty}(\mathbb U)$ consisting of all elements $\mu$ with 
$\Vert \mu \Vert_{\infty}+\Vert \mu \Vert_{c}<\infty$, namely, $\lambda_\mu$ is a Carleson measure on $\mathbb U$.
This is a Banach space with this norm. 
Moreover, we consider the corresponding spaces of Beltrami coefficients
as $\mathcal{M}(\mathbb U) =  M(\mathbb U) \cap \mathcal{L}(\mathbb U)$.

We consider a quasiconformal homeomorphism $F$ of $\mathbb U$ onto itself
whose complex dilatation $\mu_F=\bar \partial F/\partial F$ belongs to $\mathcal M(\mathbb U)$.
Concerning its continuous extension to the boundary $\mathbb R$, we know the following result.

\begin{theorem}[\mbox{\cite[Theorem 2.3]{FKP}}]\label{basic}
If $F$ is a quasiconformal homeomorphism of $\mathbb U$ onto $\mathbb U$ whose complex dilatation $\mu_F$
belongs to
${\mathcal M}(\mathbb U)$, then its extension to $\mathbb R$ is a strongly quasisymmetric homeomorphism.
\end{theorem}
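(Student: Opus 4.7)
The approach is to realize this as an application of the Fefferman--Kenig--Pipher $A_\infty$ theorem for elliptic measure of divergence-form operators with Carleson-type coefficients. Since ``strongly quasisymmetric'' means the boundary map $f:=F|_{\mathbb R}$ is locally absolutely continuous with $f'\in A_\infty$, and $f$ is already classically quasisymmetric because $F$ is quasiconformal, the real task is to extract the $A_\infty$ property of $f'$ from the Carleson hypothesis $\mu_F\in\mathcal M(\mathbb U)$.

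First I would transfer the geometric data to a PDE by pushing the flat Laplacian on the target $\mathbb U$ back through $F$. This yields, on the source $\mathbb U$, a divergence-form elliptic operator $Lv=\mathrm{div}(A(z)\nabla v)$ with the property that $u$ is harmonic on the target iff $u\circ F$ is $L$-harmonic on the source. A direct computation shows that $A(z)$ is a rational expression in $\mu_F(z)$ satisfying $c_1 I\le A(z)\le c_2 I$, with $c_1,c_2$ depending only on $\|\mu_F\|_\infty<1$, together with the pointwise bound $|A(z)-I|\lesssim |\mu_F(z)|$. Consequently, the Carleson condition $|\mu_F(z)|^2\,dxdy/y\in\mathrm{CM}$ translates into $|A(z)-I|^2\,dxdy/y$ being a Carleson measure, which is precisely the structural hypothesis of the FKP theorem.

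Next I would invoke Fefferman--Kenig--Pipher to conclude that the $L$-harmonic measure $\omega_L^{z_0}$ on $\mathbb R$ based at any $z_0\in\mathbb U$ is mutually absolutely continuous with Lebesgue measure, and that its density belongs to $A_\infty(dx)$. To read this back as information about $f$, I would identify $\omega_L^{z_0}$ with a pull-back of the standard Poisson measure at $F(z_0)$: if $u$ solves $\Delta u=0$ on the target with boundary data $\phi$, then $v=u\circ F$ solves $Lv=0$ on the source with boundary data $\phi\circ f$, and the two solvability identities at $z_0$ give $f_\ast\omega_L^{z_0}=P_{F(z_0)}(y)\,dy$. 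Hence $f'(x)= (d\omega_L^{z_0}/dx)\cdot P_{F(z_0)}(f(x))^{-1}$. Because the Poisson factor is smooth and bounded above and below on bounded intervals, and because $A_\infty$ is a local, scale- and affine-invariant condition on intervals, it follows that $f'\in A_\infty$, and then local absolute continuity is automatic.

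The main obstacle I foresee is the first step: carrying out the algebra to verify that $A(z)$ lies in the FKP class with the correct Carleson bound and with constants depending only on $\|\mu_F\|_\infty$ and $\|\mu_F\|_c$, together with the precise ellipticity constants. Once this is in place, the rest is essentially a black-box application of the FKP $A_\infty$ theorem combined with the elementary change-of-variables identification of $L$-harmonic measure under the quasiconformal map $F$.
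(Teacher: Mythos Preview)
The paper does not prove this statement at all; it simply quotes it as \cite[Theorem~2.3]{FKP} and uses it as a black box. Your sketch is, in fact, an outline of the original Fefferman--Kenig--Pipher argument: pull the Laplacian back through $F$ to obtain $L=\operatorname{div}(A\nabla\,\cdot\,)$ with $|A-I|\lesssim|\mu_F|$, verify that the Carleson hypothesis on $\mu_F$ becomes the FKP structural hypothesis on $A$, invoke their main $A_\infty$ theorem for elliptic measure, and read off the $A_\infty$ property of $f'$ from the change-of-variables identity $f_\ast\omega_L^{z_0}=\omega_\Delta^{F(z_0)}$. So on the level of strategy there is nothing to compare --- you are reproducing the cited proof.

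There is, however, a genuine gap in your last step. You write that ``the Poisson factor is smooth and bounded above and below on bounded intervals'' and that $A_\infty$ is ``local, scale- and affine-invariant,'' and conclude $f'\in A_\infty$. This does not work with a \emph{fixed} base point $z_0$: the density $P_{F(z_0)}$ decays like $y^{-2}$ and is not an $A_\infty$-weight on $\mathbb R$, so the interval-by-interval bounds you cite are not uniform, while the $A_\infty$ condition demands a single constant good for every interval. Dividing an $A_\infty$-weight by $P_{F(z_0)}\circ f$ therefore need not produce an $A_\infty$-weight, and you cannot appeal to $f$ preserving BMO without circularity, since that is Jones' characterization of strong quasisymmetry itself. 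The standard remedy (and what FKP actually do) is to let the base point vary with the interval: for each bounded $I$, take $z_I=x_I+i|I|$; quasiconformality of $F$ on $\mathbb U$ then places $F(z_I)$ at height comparable to $|f(I)|$ over $f(I)$, so $P_{F(z_I)}\asymp |f(I)|^{-1}$ uniformly on $f(I)$, and the FKP $A_\infty$ estimate for $\omega_L^{z_I}$ on $I$ becomes exactly the defining inequality $|f(E)|/|f(I)|\le K(|E|/|I|)^{\alpha}$. Alternatively, conjugate everything to $\mathbb D$ by the Cayley transform, where the Poisson kernel from the center is constant and the issue evaporates; the equivalence of $A_\infty$ on $\mathbb R$ and on $\mathbb S$ under this transform is noted later in the paper.
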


Conversely to Theorem \ref{basic}, there is a way of extending a strongly quasisymmetric homeomorphism of $\mathbb R$
to a quasiconformal homeomorphism of $\mathbb U$ onto itself whose complex dilatation induces a Carleson measure.
Let $\phi(x)=\frac{1}{\sqrt \pi}e^{-x^2}$ and $\psi(x)=\phi'(x)=-2x \phi(x)$.
We extend a strongly quasisymmetric homeomorphism $f:\mathbb R \to \mathbb R$
to $\mathbb U$ by setting a differentiable map $F: \mathbb{U} \to \mathbb{C}$ by 
\begin{equation*}\label{F}
\begin{split}
&F(x, y) = U(x, y) + iV(x, y);\\
U(x,y)&=(f \ast \phi_y)(x),\ V(x,y)=(f \ast \psi_y)(x),
\end{split}
\end{equation*}
where $\varphi_y(x)=y^{-1} \varphi(y^{-1}x)$ for $x \in \mathbb R$ and $y>0$, and $\ast$ is the convolution.
We call this extension the variant of the {\it Beurling--Ahlfors extension}
by the heat kernel. The former statement of the next theorem follows from \cite[Theorem 4.2]{FKP}.
See \cite[Theorem 3.4]{WM-2} for its exposition. The latter statement is given in \cite{WM-4}.

\begin{theorem}\label{FKP}
For a strongly quasisymmetric homeomorphism $f:\mathbb R \to \mathbb R$, 
the map $F$ given by
the variant of the Beurling--Ahlfors extension by the heat kernel is a quasiconformal
diffeomorphism of $\mathbb U$ onto itself whose complex dilatation $\mu_F$ belongs to $\mathcal M(\mathbb U)$.
Moreover, $F$ is bi-Lipschitz with respect to the hyperbolic metric. 
\end{theorem}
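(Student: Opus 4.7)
The plan is to verify three things about $F=U+iV$: first, that $F$ is a diffeomorphism of $\mathbb U$ onto itself continuously extending $f$; second, that $\mu_F \in \mathcal M(\mathbb U)$; and third, that $F$ is bi-Lipschitz in the hyperbolic metric. The computational backbone is the identity
\[
V(x,y)=y\,(f'\ast\phi_y)(x),
\]
obtained by integrating $f\ast\psi_y$ by parts with respect to $t$ using $\psi=\phi'$ and the local absolute continuity of $f$. This expresses $V$, and after further differentiation also $U_x,U_y,V_x,V_y$, as convolutions of $f'$ against fixed Schwartz kernels at scale $y$, which are the natural objects to estimate by the $A_\infty$ information on $f'$.

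For the diffeomorphism assertion, positivity of $f'$ forces $V>0$ on $\mathbb U$, and a Cauchy--Schwarz estimate applied to the convolution expressions for $\partial F$ and $\bar\partial F$ yields a pointwise lower bound on the Jacobian $J_F$; together with the approximate-identity behaviour $U(\cdot,y)\to f$ and $V(\cdot,y)\to 0$ as $y\to 0^+$, this gives a global diffeomorphism $\mathbb U\to\mathbb U$ extending $f$. For $\mu_F\in\mathcal M(\mathbb U)$, the bound $\Vert\mu_F\Vert_\infty<1$ comes from ordinary quasisymmetry, which strong quasisymmetry implies. The Carleson part is the analytic heart: one derives a pointwise estimate
\[
|\mu_F(x,y)|\;\lesssim\;\frac{\mathrm{Osc}_y(f')(x)}{(f'\ast\phi_y)(x)},
\]
where $\mathrm{Osc}_y(f')(x)$ measures the deviation of $f'$ from its average at scale $y$ near $x$, and then invokes the Fefferman--Kenig--Pipher Carleson measure characterization \cite[Theorem~4.2]{FKP}: when $\log f'\in{\rm BMO}^*_{\mathbb R}(\mathbb R)$, the square of this quantity weighted by $dx\,dy/y$ is a Carleson measure. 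I expect the main obstacle of this step to be the pointwise estimate itself; once it is in place, the Carleson conclusion is immediate from \cite{FKP}.

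For the bi-Lipschitz property in the hyperbolic metric, it suffices to establish the two-sided comparisons $|\partial F|+|\bar\partial F|\asymp V/y$ and $J_F\asymp (V/y)^2$ pointwise on $\mathbb U$, with constants depending only on the $A_\infty$-data of $f'$; pulling back the hyperbolic density $|dz|/\mathrm{Im}\,z$ then gives a bounded distortion in both directions. The upper bounds follow directly from the convolution formulas together with smoothness of $\phi$. The lower bounds are the technical obstacle: they require that $(f'\ast\phi_y)(x)$ cannot be much smaller than the local averages of $f'$ over intervals of length $y$ centred at $x$, which is exactly the reverse H\"older inequality built into the $A_\infty$ hypothesis. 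This is where the strong quasisymmetry assumption is essential and cannot be weakened to quasisymmetry alone; modulo this comparison the bi-Lipschitz claim is formal.
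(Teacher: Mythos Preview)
The paper does not prove this theorem; it merely records that the first assertion follows from \cite[Theorem~4.2]{FKP} (with an exposition in \cite[Theorem~3.4]{WM-2}) and that the bi-Lipschitz assertion is in \cite{WM-4}. Your outline is a correct sketch of precisely those arguments: the integration-by-parts identity $V=y\,(f'\ast\phi_y)$, the pointwise dilatation bound in terms of an oscillation quotient, and the appeal to the Fefferman--Kenig--Pipher Carleson measure characterization are exactly the content of \cite[Theorem~4.2]{FKP}, and the two-sided comparison $|\partial F|+|\bar\partial F|\asymp V/y$ via the $A_\infty$ reverse H\"older property is how the hyperbolic bi-Lipschitz bound is established in \cite{WM-4}. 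So your approach and the paper's (cited) approach coincide.

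One small comment: for the uniform bound $\Vert\mu_F\Vert_\infty<1$ and the global diffeomorphism statement you invoke ``ordinary quasisymmetry'' and a Cauchy--Schwarz argument rather loosely. In the actual references the Jacobian positivity and the dilatation bound are obtained together from the same convolution identities and the doubling/quasisymmetry of $f$; this is not a gap in your plan, but in a full write-up you would want to make that step explicit rather than separate it from the Carleson estimate.
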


We note that in the case where the BMO norm of $\log f'$ is sufficiently small for a
strongly quasisymmetric homeomorphism $f$, Semmes \cite{Se} used a modified Beurling--Ahlfors extension $F$
by compactly supported kernels $\phi$ and $\psi$ to prove the same properties as in Theorem \ref{FKP}.
By dividing the weight $f'$ into small pieces and composing the resulting maps, the assumption on the small BMO norm can be
removed to obtain a quasiconformal extension of the same properties.

\subsection{The BMO Teichm\"uller space}
The {\it universal Teichm\"uller space} $T=T(\mathbb U)$ is the set of all Teichm\"uller equivalence classes of
Beltrami coefficients in $M(\mathbb U)$. Here, $\mu_1$ and $\mu_2$ in $M(\mathbb U)$ are equivalent if
$F^{\mu_1}=F^{\mu_2}$ on $\mathbb R$, where $F^{\mu}$ denote the unique quasiconformal homeomorphism 
of $\mathbb U$ onto itself extendable to $\mathbb R$ that has complex dilatation 
$\mu \in M(\mathbb U)$ and keeps the points $0$, $1$ and $\infty$ fixed.
We denote the quotient projection by
$\pi:M(\mathbb U) \to T(\mathbb U)$, which is called the {\it Teichm\"uller projection}.
For $\mathcal M(\mathbb U) \subset M(\mathbb U)$, we define the {\it BMO Teichm\"uller space}
$T_b=T_b(\mathbb U)$ by $\pi(\mathcal M(\mathbb U))$ equipped with the quotient topology from $\mathcal M(\mathbb U)$.
This space was introduced in \cite{AZ}.
We can prove that $T_b$ has a complex Banach manifold structure 
such that $\pi:\mathcal M(\mathbb U) \to T_b(\mathbb U)$ 
is holomorphic and has a local holomorphic section (see Theorem \ref{model} below). 

Let ${\rm SQS}$ be the set of all strongly quasisymmetric homeomorphisms of $\mathbb R$ satisfying the normalized condition
keeping $0$, $1$ and $\infty$ fixed. Then, the correspondence $\mu \mapsto F^{\mu}|_{\mathbb R}$ induces 
a well-defined bijection between $T_b$ and ${\rm SQS}$.
Under this identification, structures on $T_b$ and on ${\rm SQS}$ are imported to each other.
In particular, $T_b$ is endowed with the group structure and ${\rm SQS}$ is endowed with
the complex Banach manifold structure. 

By Proposition \ref{convex}, $\rm BMO_{\mathbb R}^*(\mathbb R)$ is the open convex
subset of $\rm BMO_{\mathbb R}(\mathbb R)$ consisting of all real-valued BMO
functions $u$ on $\mathbb R$
such that $e^u$ is an $A_{\infty}$-weight. 
Concerning the bijective relation between $T_b \cong {\rm SQS}$ and $\rm BMO_{\mathbb R}^*(\mathbb R)$,
the following result was obtained in \cite[Theorem 7.1]{SWei}.

\begin{proposition}\label{topequiv}
The map $\Psi:{\rm SQS} \to \rm BMO_{\mathbb R}^*(\mathbb R)$ given by $f \mapsto \log f'$ is a surjective
homeomorphism.
\end{proposition}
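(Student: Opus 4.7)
The plan is to (i) verify bijectivity directly, (ii) prove continuity of $\Psi^{-1}$ by lifting BMO convergence to Carleson convergence of Beltrami coefficients via the heat-kernel Beurling--Ahlfors extension, and (iii) prove continuity of $\Psi$ by a quantitative analysis of how the boundary trace derivative depends on the Beltrami coefficient.

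For (i), well-definedness holds because $f \in {\rm SQS}$ means $f'$ is an $A_\infty$-weight, so $\log f' \in {\rm BMO}_{\mathbb R}^*(\mathbb R)$. Surjectivity is explicit: for $u \in {\rm BMO}_{\mathbb R}^*(\mathbb R)$, the map $f_u(x) = \int_0^x e^{u(t)}\,dt / \int_0^1 e^{u(t)}\,dt$ has derivative $e^u/c$ with $c = \int_0^1 e^{u}\,dt$, which is an $A_\infty$-weight and satisfies $\log f_u' \equiv u$ modulo constants. Injectivity is immediate: if $\log f_1' \equiv \log f_2'$ modulo constants, then $f_1' = c f_2'$ for some $c>0$, and integrating with $f_i(0)=0$, $f_i(1)=1$ forces $c=1$.

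For (ii), given $u_n \to u$ in ${\rm BMO}_{\mathbb R}^*(\mathbb R)$, I would apply the variant of the Beurling--Ahlfors extension of Theorem \ref{FKP} to $f_n = f_{u_n}$ and $f = f_u$ to obtain quasiconformal extensions $F_n, F:\mathbb U \to \mathbb U$. The differences of partial derivatives $\partial_x(F_n-F)$, $\partial_y(F_n-F)$ are convolutions of $f_n' - f' = (e^{u_n} - e^u)/c_n$ with $\phi_y, \psi_y$ and their scaled derivatives. Using the John--Nirenberg inequality to control $\|e^{u_n} - e^u\|_{L^p(I)}$ on bounded intervals $I$ in terms of $\|u_n - u\|_*$, and integrating the pointwise control across scales $y$, I expect $\|\mu_{F_n}-\mu_F\|_\infty + \|\mu_{F_n}-\mu_F\|_c \to 0$. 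This gives a lift of the convergence to $\mathcal M(\mathbb U)$, hence convergence in $T_b$ and in ${\rm SQS}$.

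For (iii), by the quotient topology on $T_b = \pi(\mathcal M(\mathbb U))$, it suffices to prove continuity of $\mathcal M(\mathbb U) \to {\rm BMO}_{\mathbb R}^*(\mathbb R)$, $\mu \mapsto \log (F^\mu)'|_{\mathbb R}$. I would first establish continuity at $\mu = 0$: by tracking the constants in the Coifman--Fefferman estimate \eqref{SD} through the FKP machinery behind Theorem \ref{basic}, I expect the $A_\infty$ constant of $(F^\mu)'|_{\mathbb R}$ to be bounded by a function of $\|\mu\|_\infty + \|\mu\|_c$ that tends to $1$ as this norm tends to $0$; the inverse Jensen characterization then converts this into the BMO bound on $\log (F^\mu)'|_{\mathbb R}$. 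To handle a general base point, factor $F^{\mu_n} = F^\mu \circ G_n$ where $G_n$ has Beltrami coefficient $\nu_n$ small in $\mathcal L(\mathbb U)$; then $\log(F^{\mu_n})'|_{\mathbb R} = \log(F^\mu)'|_{\mathbb R} \circ G_n + \log G_n'|_{\mathbb R}$, the second term tending to $0$ in BMO by continuity at the origin, and the first compared to $\log(F^\mu)'|_{\mathbb R}$ via the composition operator $P_{G_n}$ of Theorem \ref{pullback}.

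The hard part will be the very last step of (iii): showing $P_{G_n}u \to u$ in BMO for each fixed $u$ as $G_n \to {\rm id}$ in ${\rm SQS}$. Since $T_b \cong {\rm SQS}$ is not a topological group (as the paper emphasizes in its main theorem), joint continuity of composition fails, and I would have to rely on the uniform operator-norm bound of $\{P_{G_n}\}$ from Theorem \ref{pullback} together with an approximation argument for $u$ by nicer functions in a norm where pointwise convergence of $G_n$ suffices. Together with the quantitative FKP refinement, this is the technical heart of the proof.
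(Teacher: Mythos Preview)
The paper does not prove Proposition~\ref{topequiv} directly; it is cited from \cite[Theorem~7.1]{SWei}. The paper's own argument appears only later as Corollary~\ref{real-analytic} and proceeds by a different route: ${\rm SQS}$ is realized as the diagonal of $T_b(\mathbb U)\times T_b(\mathbb L)$, the map $L:{\rm CA}\to{\rm BMO}(\mathbb R)$ is shown to be biholomorphic (Theorem~\ref{biholo}, with the local inverse supplied by Semmes's construction), and $\Psi=L|_{\rm SQS}$ is then automatically a real-analytic diffeomorphism. No direct estimate on how $\log f'$ moves with $f$ is ever needed.

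Your step~(iii) contains a genuine gap, and the paper itself explains why. With the factorization $F^{\mu_n}=F^\mu\circ G_n$ you obtain $f_n=f\circ g_n$ and
\[
\log f_n'-\log f'=\bigl(P_{g_n}(\log f')-\log f'\bigr)+\log g_n',
\]
so the ``hard part'' you isolate is $P_{g_n}u\to u$ for $u=\log f'$ as $g_n\to{\rm id}$. But Corollary~\ref{last} of this very paper shows that this \emph{fails}: there exist $v_0=\log f_k'$ for a piecewise-affine $f_k\in{\rm SQS}$, and a sequence $g_n\to{\rm id}$ in ${\rm SQS}$, with $P_{g_n}(v_0)\not\to v_0$. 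Your proposed rescue by approximation cannot work either, since smooth or continuous functions are not dense in ${\rm BMO}$ (their closure is ${\rm VMO}$), and the step function $v_0$ lies outside ${\rm VMO}$. As written, the argument is therefore either circular or appeals to something false.

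The repair is simply to reverse the order of the factorization. Write $f_n=g_n\circ f$ with $g_n=f_n\circ f^{-1}$. Right translation $R_{[\mu]^{-1}}$ is a biholomorphic automorphism of $T_b$ (Lemma~\ref{auto}), so $[\mu_n]\to[\mu]$ gives $g_n\to{\rm id}$ in ${\rm SQS}$, and then
\[
\log f_n'-\log f'=P_f(\log g_n')\longrightarrow 0
\]
because $P_f$ is a single \emph{fixed} bounded operator (Theorem~\ref{pullback}) and $\log g_n'\to 0$ by your continuity-at-the-origin step. No strong convergence of composition operators is required.
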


This claim implies that the real BMO space provides ${\rm SQS}$, which can be regarded as
the real model of the BMO Teichm\"uller space $T_b$, 
with a real Banach manifold structure. Later in Corollary \ref{real-analytic}, we will see 
that this map is a real-analytic homeomorphism. 
In particular, the topology on $T_b \cong {\rm SQS}$ is equivalent to that on 
${\rm BMO}_{\mathbb R}^*(\mathbb R)$ defined by the BMO norm. 
Thus, we have the identifications
\begin{equation}\label{trinity}
T_b \cong {\rm SQS} \cong {\rm BMO}_{\mathbb R}^*(\mathbb R),
\end{equation}
and their correspondence becomes clear.

Especially, we consider the group structure on $T_b$. As we have mentioned,
we can regard $T_b$ as a group by the identification of $T_b$ with ${\rm SQS}$.
The group operation is denoted by $[\mu] \ast [\nu]$ for $[\mu], [\nu] \in T_b$ and the inverse by
$[\mu]^{-1}$. More explicitly, $[\mu] \ast [\nu]$ is the Teichm\"uller class of
the complex dilatation $\mu \ast \nu$ of $F^\mu \circ F^\nu$ and $[\mu]^{-1}$ is that of
the complex dilatation $\mu^{-1}$ of $(F^\mu)^{-1}$.
The following result, which says that $T_b$ is a {\it partial topological group} in the sense of \cite{GS}, 
was proved in \cite{Wei}.

\begin{proposition}\label{nearid}
If $[\mu]$ and $[\nu]$ converge to $[\rm 0]$ in $T_b$, 
then $[\mu] \ast [\nu] \to [\rm 0]$ and $[\nu]^{-1} \to [\rm 0]$.
\end{proposition}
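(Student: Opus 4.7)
The plan is to transport the problem from $T_b$ to ${\rm BMO}_{\mathbb R}^*(\mathbb R)$ via the homeomorphism $\Psi$ of Proposition \ref{topequiv}, and then argue via the chain rule. Writing $f_\mu := F^{\mu}|_{\mathbb R} \in {\rm SQS}$ and $u_\mu := \log f_\mu' \in {\rm BMO}_{\mathbb R}^*(\mathbb R)$, the hypothesis $[\mu_n], [\nu_n] \to [0]$ in $T_b$ becomes $u_{\mu_n}, u_{\nu_n} \to 0$ in BMO norm. Since $[\mu]\ast[\nu]$ and $[\mu]^{-1}$ correspond under $\Psi$ to $f_\mu \circ f_\nu$ and $f_\mu^{-1}$ in ${\rm SQS}$ respectively, the two conclusions reduce to showing
\[
u_{\mu_n}\circ f_{\nu_n} + u_{\nu_n} \longrightarrow 0 \qquad \text{and} \qquad -\,u_{\nu_n}\circ f_{\nu_n}^{-1} \longrightarrow 0
\]
in ${\rm BMO}(\mathbb R)$. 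The whole problem is thereby reduced to controlling the pullback operator $P_g:u\mapsto u\circ g$ on ${\rm BMO}(\mathbb R)$ when $g$ is $f_{\nu_n}$ or $f_{\nu_n}^{-1}$, i.e.\ when $g$ is close to the identity in ${\rm SQS}$.

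The key input is a quantitative refinement of Theorem \ref{pullback}: the operator norm $\|P_g\|_{{\rm BMO}\to{\rm BMO}}$ depends only on the $A_\infty$ parameters of $g'$, equivalently on the SQS constants $(K,\alpha)$ of (\ref{alphaK}). Proposition \ref{C_0} supplies the needed uniformity: once $\|u_{\nu_n}\|_* < C_{JN}$, the weight $e^{u_{\nu_n}}$ is $A_\infty$ with constants controlled by $\|u_{\nu_n}\|_*$ alone, so $f_{\nu_n}$ has SQS constants tending to those of the identity. Combined with the quantitative closure of ${\rm SQS}$ under inversion (which follows from $A_\infty$-weight theory on $\mathbb R$, or alternatively from the bi-Lipschitz property of Theorem \ref{FKP} applied to the quasiconformal extension), the same holds for $f_{\nu_n}^{-1}$. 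Hence there is a uniform $M$ with $\|P_{f_{\nu_n}}\|,\, \|P_{f_{\nu_n}^{-1}}\| \leq M$ for all large $n$, and the two estimates
\[
\|u_{\mu_n}\circ f_{\nu_n}+u_{\nu_n}\|_* \le M\|u_{\mu_n}\|_* + \|u_{\nu_n}\|_* \to 0, \qquad \|u_{\nu_n}\circ f_{\nu_n}^{-1}\|_* \le M\|u_{\nu_n}\|_* \to 0
\]
follow at once. Applying $\Psi^{-1}$ then converts these back to the required convergences in $T_b$.

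The main obstacle is making Theorem \ref{pullback} quantitative: the excerpt states only the qualitative equivalence, whereas the above argument needs a uniform operator-norm bound of $P_g$ over a small BMO-neighbourhood of the identity. I would re-inspect the proof in \cite{Jo} (or the calculations behind Proposition \ref{topequiv} in \cite{SWei}) to extract the dependence of $\|P_g\|$ on the $A_\infty$ data of $g'$ alone. A secondary but milder technical point is the quantitative closure of ${\rm SQS}$ under inversion; the naive estimate from (\ref{alphaK}) only yields a lower bound on $|f^{-1}(E')|/|f^{-1}(I')|$ in terms of $|E'|/|I'|$, so one needs to invoke the symmetry of the $A_\infty$ condition (e.g.\ via reverse H\"older and the Jones factorization already used in the proof of Proposition \ref{convex}) to obtain the required upper bound with controlled constants.
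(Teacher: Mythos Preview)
Your approach is correct. The paper does not give its own proof but imports the result from \cite{Wei}; the argument there works on the Beltrami-coefficient side, controlling the Carleson norms of complex dilatations of compositions and inverses directly, rather than passing through the homeomorphism $\Psi$ to the BMO side as you do.

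What is worth noting is that your route inverts the logical order the paper adopts later: in Proposition \ref{uniformPh} the paper \emph{deduces} the uniform bound $\|P_f\|\le C_0$ for $f$ near the identity \emph{from} Proposition \ref{nearid}, whereas you take such a uniform bound as the engine that drives Proposition \ref{nearid}. Both directions are valid once one has the quantitative dependence of $\|P_f\|$ on the $A_\infty$ data of $f'$; this is precisely what the paper cites in the Remark following Lemma \ref{general} (namely \cite[Example 2.3]{Got}), so your ``main obstacle'' is already resolved in the literature and you need not reopen \cite{Jo}. Your secondary concern, the quantitative closure of ${\rm SQS}$ under inversion, is likewise standard $A_\infty$ theory: the Coifman--Fefferman characterisation is symmetric in that $|E|/|I|$ and $\omega(E)/\omega(I)$ each control the other by a power with constants depending only on the $A_\infty$ data, so the SQS constants of $f^{-1}$ are controlled by those of $f$.

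One point to make explicit: you are invoking Proposition \ref{topequiv} both to translate the hypothesis into BMO and to translate the conclusion back to $T_b$. This is legitimate since the proof of the homeomorphism $\Psi$ in \cite{SWei} does not rely on Proposition \ref{nearid}, but the non-circularity should be stated.
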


The correspondence $f \mapsto \log f'$ for $f \in {\rm SQS}$ gives
a topological equivalence of 
the BMO Teichm\"uller space $T_b$ with ${\rm BMO}_{\mathbb R}^*(\mathbb R)$ by Proposition \ref{topequiv}.
Translating Proposition \ref{nearid} to ${\rm BMO}_{\mathbb R}^*(\mathbb R)$,
we see that $\Vert \log (h\circ f)' \Vert_* \to 0$ as $\Vert \log h' \Vert_* \to 0$ and
$\Vert \log f' \Vert_* \to 0$ for $h, f \in {\rm SQS}$. 

\subsection{Conformal extension and BMOA}
For $\mu \in M(\mathbb L)$, we take a quasiconformal homeomorphism $H=H_\mu$ of
$\mathbb C$ onto itself that is conformal on $\mathbb U$ and whose complex dilatation on $\mathbb L$
is $\mu$. We can characterize the condition $\mu \in \mathcal M(\mathbb L)$ 
by the conformal map $H|_{\mathbb U}$ and analytic function spaces defined as follows.
Let $H^2(\mathbb U)$ be the Hardy space of holomorphic functions on $\mathbb U$ defined in a conformally invariant way
from that on $\mathbb D$ (see \cite[Chapter 11]{Du}).
We introduce 
$$
{\rm BMOA}(\mathbb U)=\{\phi \in H^2(\mathbb U) \mid \Vert \lambda^1_\phi \Vert_c<\infty\},
$$
where $\lambda^1_\phi=|\phi'(z)|^2ydxdy$ is a Carleson measure on $\mathbb U$. 
This space modulo constants is a Banach space with
norm $\Vert \lambda^1_\phi \Vert_c^{1/2}$. 
For the set ${\rm Hol}(\mathbb U)$ of holomorphic functions on $\mathbb U$, we also define
$$
B(\mathbb U)=\{\varphi \in {\rm Hol}(\mathbb U) \mid \Vert \lambda^2_\varphi \Vert_c<\infty\},
$$
where $\lambda^2_\varphi=|\varphi(z)|^2y^3dxdy$ is a Carleson measure on $\mathbb U$. This is a Banach space with
norm $\Vert \lambda^2_\varphi \Vert_c^{1/2}$. 

With the aid of Theorems \ref{basic} and \ref{FKP} and the geometric characterization of the image $H(\mathbb U)$, 
we can summarize the equivalent conditions for $H=H_\mu$
as follows (see \cite[Theorem A]{SWei}).

\begin{theorem}[\mbox{\cite[Theorem 4]{AZ}, \cite[Theorem 4]{BJ}}]\label{Guo11}
Let $H:\mathbb U \to \mathbb C$ be a conformal mapping on $\mathbb U$ with
$\lim_{z \to \infty}H(z)=\infty$ that extends to a quasiconformal homeomorphism of 
$\mathbb C$ whose complex dilatation on $\mathbb L$ is $\mu$.
Then, the following conditions are equivalent $\!:$ 
\begin{enumerate}
\item[(a)]
$\mu$ belongs to $\mathcal M(\mathbb L);$
\item[(b)]
$\mathcal L_H =\log H'\in {\rm BMOA}(\mathbb U);$
\item[(c)]
$\mathcal S_H = \mathcal L_H'' - \frac{1}{2}(\mathcal L_H')^2 \in B(\mathbb U)$;
\item[(d)]
$\Gamma = \partial H(\mathbb U)$ is a quasicircle satisfying the Bishop--Jones condition. 
\end{enumerate}
\end{theorem}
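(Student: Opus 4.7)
The plan is to establish the four-way equivalence by combining the classical BMOA characterization of $\log H'$ for conformal welding with the Bishop--Jones description of chord-arc quasicircles, while treating the pre-Schwarzian/Schwarzian passage separately as an essentially formal comparison of Carleson measures.

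First, I would handle the equivalence $(b) \Leftrightarrow (c)$ within the holomorphic framework. Since $\mathcal{L}_H$ is holomorphic on $\mathbb{U}$, by the Littlewood--Paley characterization the condition $\mathcal{L}_H \in {\rm BMOA}(\mathbb{U})$ is equivalent to $|\mathcal{L}_H''(z)|^2 y^3 \, dx dy$ being a Carleson measure (comparison between first and second derivative Carleson norms is standard using Hardy--Stein type identities). Writing $\mathcal{L}_H'' = \mathcal{S}_H + \tfrac{1}{2}(\mathcal{L}_H')^2$, the equivalence then reduces to controlling $|\mathcal{L}_H'|^4 y^3 \, dx dy$ by $|\mathcal{L}_H'|^2 y \, dx dy$ (and conversely). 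This is achieved via the John--Nirenberg inequality applied to $\mathcal{L}_H \in {\rm BMOA}$, together with the univalence estimate $|H''(z)/H'(z)| \le 6/y$ on $\mathbb{U}$ (which forces $y|\mathcal{L}_H'(z)|$ to be uniformly bounded), so that $|\mathcal{L}_H'|^4 y^3 \lesssim |\mathcal{L}_H'|^2 y$ pointwise.

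Next, for $(a) \Leftrightarrow (b)$, I would proceed in two halves. For $(a) \Rightarrow (b)$: apply the reflection trick in $\mathbb{L}$ so that $H$ restricted to $\mathbb{L}$ has complex dilatation $\mu \in \mathcal{M}(\mathbb{L})$, and transfer the Carleson control of $\mu$ to a Carleson bound on the pre-Schwarzian via the Beltrami equation together with the standard Cauchy-type integral representation of $\mathcal{L}_H$ in terms of $\mu$ (this is essentially the content of \cite[Theorem 4]{AZ}). For $(b) \Rightarrow (a)$: given $\mathcal{L}_H \in {\rm BMOA}(\mathbb{U})$, one builds a quasiconformal extension of $H|_{\mathbb R}$ to $\mathbb{L}$ whose dilatation is Carleson by invoking the variant Beurling--Ahlfors extension (Theorem \ref{FKP}) applied to the strongly quasisymmetric boundary correspondence; uniqueness of quasiconformal extension up to normalization then yields $\mu \in \mathcal{M}(\mathbb{L})$.

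For the remaining loop with $(d)$, the Bishop--Jones theorem supplies $(b) \Leftrightarrow (d)$ directly: a quasicircle satisfies the Bishop--Jones condition precisely when the logarithmic derivative of its Riemann map has BMOA boundary values, which is exactly how that condition is designed to be characterized analytically. The main obstacle, as I see it, is not any single implication but rather keeping the quantitative Carleson-norm estimates consistent when passing between $\mu$, $\mathcal{L}_H$, $\mathcal{S}_H$, and the geometry of $\Gamma$; in particular the control of the nonlinear term $(\mathcal{L}_H')^2$ in $(b) \Leftrightarrow (c)$ relies crucially on the univalence of $H$ and on exponential integrability from John--Nirenberg, and these are the places where one must invoke the specific structure rather than formal Carleson-measure manipulation.
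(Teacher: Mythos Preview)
The paper does not give its own proof of this theorem: it is stated as a known result, attributed to Astala--Zinsmeister \cite[Theorem~4]{AZ} and Bishop--Jones \cite[Theorem~4]{BJ}, with a pointer to the summary in \cite[Theorem~A]{SWei}. There is therefore nothing in the paper to compare your argument against; the theorem functions here purely as a preliminary quoted from the literature.

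That said, your sketch contains a genuine slip in the direction $(b)\Rightarrow(a)$. You write that one ``builds a quasiconformal extension of $H|_{\mathbb R}$ to $\mathbb L$ \ldots\ by invoking the variant Beurling--Ahlfors extension (Theorem~\ref{FKP}) applied to the strongly quasisymmetric boundary correspondence; uniqueness of quasiconformal extension up to normalization then yields $\mu\in\mathcal M(\mathbb L)$.'' Two problems: first, Theorem~\ref{FKP} applies to self-maps of $\mathbb R$, whereas $H|_{\mathbb R}$ maps $\mathbb R$ onto the curve $\Gamma$, so one must pass through the conformal welding $f=G^{-1}\circ H|_{\mathbb R}$ (with $G$ the Riemann map of the complementary domain) and argue that $f$ is strongly quasisymmetric---which already needs something like the Bishop--Jones condition on the other side. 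Second, and more seriously, quasiconformal extensions of a given conformal map are \emph{not} unique, so constructing \emph{some} extension with Carleson dilatation does not force the \emph{given} $\mu$ to lie in $\mathcal M(\mathbb L)$. The theorem should be read as ``$H$ admits an extension with $\mu\in\mathcal M(\mathbb L)$,'' and your construction then establishes exactly that; the appeal to uniqueness is both false and unnecessary. The remaining parts of your outline---the $(b)\Leftrightarrow(c)$ passage via the univalence bound $y|\mathcal L_H'(z)|\le 6$ and the identification of $(d)$ with the Bishop--Jones characterization---are in line with the arguments in the cited sources.
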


Here, a Jordan curve $\Gamma$ satisfies the {\it Bishop--Jones condition} if 
there are constants $\delta>0$ and $C>0$ such that
the domain $\Omega$ bounded by $\Gamma$ 
satisfies the following: 
for every $z \in \Omega$, 
there exists a Jordan domain $\Omega_z \subset \Omega$ containing $z$ and having 
the rectifiable boundary $\partial \Omega_z$ such that the length of $\partial \Omega_z$ is less than $C d(z,\Gamma)$ and
the harmonic measure of $\partial \Omega_z \cap \Gamma$ with respect to $z \in \Omega_z$ is
greater than $\delta$. This condition is invariant under a
bi-Lipschitz homeomorphism of $\mathbb C$ onto itself in the Euclidean metric.

We consider a subset ${\mathcal T}_b \subset {\rm BMOA}(\mathbb U)$ consisting of all $\mathcal L_H$ such that
$H:\mathbb U \to \mathbb C$ is a conformal mapping on $\mathbb U$ with
$\lim_{z \to \infty}H(z)=\infty$ that can be extended quasiconformally to $\mathbb C$.
We also consider a subset ${\mathscr T}_b \subset B(\mathbb U)$ consisting of all $\mathcal S_H$
for those $H$. It is known that ${\mathcal T}_b$ and ${\mathscr T}_b$ are contractible open subsets
that correspond bijectively to $T_b$. Hence, they serve as the models of the BMO Teichm\"uller space.
The correspondence of the elements in these spaces is described as follows, and in particular,
the complex Banach manifold structure on $T_b$ can be introduced in this way.
The following theorem is based on the arguments in \cite[Sections 5, 6]{SWei} 
with the adaptation to the case of $\mathbb U$ as in \cite[Section 7]{WM-4}.

\begin{theorem}\label{model}
$(1)$ The map $\beta:{\mathcal M}(\mathbb L) \to {\mathscr T}_b$ defined by $\mu \mapsto \mathcal S_{H_\mu}$
is holomorphic and there is a local holomorphic inverse of $\beta$ at every point of ${\mathscr T}_b$.
Moreover, $\beta \circ \pi^{-1}$ for the Teichm\"uller projection $\pi:{\mathcal M}(\mathbb L) \to T_b$ gives a
well-defined surjective homeomorphism $T_b \to {\mathscr T}_b$.
$(2)$ The map $\alpha:{\mathcal T}_b \to {\mathscr T}_b$ defined by $\phi \mapsto \varphi=\phi''-\frac{1}{2} (\phi')^2$ is 
a biholomorphic homeomorphism.
\end{theorem}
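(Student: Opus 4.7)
The plan is to establish part (1) in three substeps and then derive part (2) from it together with the explicit algebraic form of $\alpha$. For the holomorphicity of $\beta$, I would fix a basepoint $\mu_0 \in \mathcal{M}(\mathbb{L})$ and invoke the Ahlfors--Bers theorem on holomorphic dependence of quasiconformal solutions on the Beltrami coefficient, which gives that $\mu \mapsto H_\mu|_{\mathbb{U}}$ is holomorphic into the space of conformal maps on $\mathbb{U}$. Composing with the operations $H \mapsto \mathcal{L}_H = \log H'$ and $\mathcal{L}_H \mapsto \mathcal{L}_H'' - \tfrac{1}{2}(\mathcal{L}_H')^2$ then yields a map into $B(\mathbb{U})$, with the key analytic estimate being a product bound for $(\mathcal{L}_H')^2$ and a one-order differentiation bound between the relevant Carleson-measure spaces. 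That the image actually lies in $\mathscr{T}_b$ is exactly the content of Theorem \ref{Guo11}, using the equivalence between conditions (a) and (c) there.

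Next, to construct a local holomorphic inverse of $\beta$ near a point $\varphi_0 \in \mathscr{T}_b$, I would first reconstruct the conformal map $H$ on $\mathbb{U}$ from $\varphi$ by solving the Schwarzian ODE (taking the ratio of two linearly independent solutions of a second-order linear ODE, which gives holomorphic dependence on $\varphi$), normalizing at $\infty$, and then extending $H$ quasiconformally to $\mathbb{C}$ to produce a Beltrami coefficient on $\mathbb{L}$ in $\mathcal{M}(\mathbb{L})$. The reverse direction of Theorem \ref{Guo11} guarantees the Carleson control; holomorphic dependence on $\varphi$ can be ensured by performing the extension via a Beurling--Ahlfors-type procedure (Theorem \ref{FKP}) on the boundary values of $H$, whose dependence on the boundary map is linear in the convolution construction. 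Well-definedness of the descended map $T_b \to \mathscr{T}_b$ then follows because equivalent $\mu_1, \mu_2$ yield conformal maps $H_{\mu_1}|_{\mathbb{U}}$ and $H_{\mu_2}|_{\mathbb{U}}$ agreeing up to a M\"obius transformation, which is killed by taking the Schwarzian; the local section gives local homeomorphisms, producing the global homeomorphism.

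For part (2), holomorphicity of $\alpha: \mathcal{T}_b \to B(\mathbb{U})$ uses the same algebraic estimates, and its image lies in $\mathscr{T}_b$ by the (b)$\Leftrightarrow$(c) direction of Theorem \ref{Guo11}. Injectivity follows because equal Schwarzians force the corresponding $H_1, H_2$ to differ only by a M\"obius transformation, which is collapsed by the normalization, so $\phi_1 = \phi_2$. Surjectivity follows from part (1): for $\varphi \in \mathscr{T}_b$, pick $\mu \in \beta^{-1}(\varphi) \subset \mathcal{M}(\mathbb{L})$ and set $\phi = \log H_\mu'$. Biholomorphicity then reduces to showing $\alpha^{-1}$ is holomorphic, which follows by composing the local holomorphic section of $\beta$ with the holomorphic operation $\mu \mapsto \log H_\mu'$. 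The main obstacle throughout is the local-section construction in part (1): matching Carleson-norm boundedness under the quasiconformal extension of the reconstructed $H$, and proving holomorphic dependence in the Banach-space sense, requires the fine analysis of \cite{FKP} and \cite{SWei}, together with the half-plane adaptation of \cite{WM-4} (verifying that the $\mathrm{BMOA}$ and $B$ norms behave correctly at $\infty$).
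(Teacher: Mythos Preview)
The paper itself does not prove Theorem~\ref{model}: it records the statement as known and defers to \cite[Sections~5,~6]{SWei} with the half-plane adaptation in \cite[Section~7]{WM-4}. Your outline has the right overall architecture, and your derivation of part~(2) from part~(1) is sound. There is, however, a genuine gap in your construction of the local holomorphic right inverse of $\beta$ in part~(1).

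You propose to recover $H$ on $\mathbb U$ from $\varphi$ by solving the Schwarzian ODE, pass to the boundary embedding $h=H|_{\mathbb R}$, and then invoke Theorem~\ref{FKP}. But Theorem~\ref{FKP} extends a strongly quasisymmetric \emph{self-map} $f:\mathbb R\to\mathbb R$ to a self-map $F:\mathbb U\to\mathbb U$; it does not produce a quasiconformal extension of an embedding $h:\mathbb R\to\mathbb C$ across the curve, so the citation does not furnish the object you need. Even granting a Semmes-type extension operator for embeddings, the remark that ``dependence on the boundary map is linear in the convolution construction'' only shows that the extended map $G$ depends linearly on $h$; the complex dilatation $\mu=\bar\partial G/\partial G$ is a quotient, and you still owe the argument (local boundedness plus G\^ateaux holomorphy into $\mathcal L(\mathbb L)$) that $\varphi\mapsto\mu$ is holomorphic. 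Note also that extension schemes for embeddings of this kind require a chord-arc image, which is not available for general $\varphi\in\mathscr T_b$, so this route would not yield a section over a full neighborhood of an arbitrary basepoint.

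The argument the paper is pointing to in \cite{SWei} is different and avoids all of this: near the origin of $\mathscr T_b$ one uses the Ahlfors--Weill section, which assigns to $\varphi\in B(\mathbb U)$ the explicit Beltrami coefficient $\sigma(\varphi)(z)=-\tfrac12(\operatorname{Im}z)^2\varphi(\bar z)$ on $\mathbb L$. This $\sigma$ is a bounded linear map $B(\mathbb U)\to\mathcal L(\mathbb L)$ landing in $\mathcal M(\mathbb L)$ for small $\varphi$, so holomorphy is immediate and $\beta\circ\sigma=\mathrm{id}$ near $0$. A local section at an arbitrary $\varphi_0=\beta(\nu_0)$ is then obtained by transporting $\sigma$ via the biholomorphic right translation $R_{[\nu_0]}$ of Lemma~\ref{auto}; this is precisely how one also gets the local holomorphic sections of $\pi$ alluded to in the proof of that lemma, and it is what makes $\beta\circ\pi^{-1}:T_b\to\mathscr T_b$ a homeomorphism.
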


Concerning the boundary extension of $\phi \in {\rm BMOA}(\mathbb U) \subset H^2(\mathbb U)$, 
we note that $\phi$ has the non-tangential limit 
almost everywhere on $\mathbb R$ and the Poisson integral of this boundary function reproduces $\phi$.
This links BMO properties of $\phi$ on $\mathbb U$ and on $\mathbb R$.
The following theorem is well-known, which can be seen from \cite[Theorems 9.17 and 9.19]{Zh}.

\begin{theorem}\label{131} 
Let $b(\phi)$ be the boundary extension of $\phi \in {\rm BMOA}(\mathbb U)$ defined by
the non-tangential limit on $\mathbb R$.
Then, $b(\phi) \in {\rm BMO}(\mathbb R)$, and
the boundary extension operator $b:{\rm BMOA}(\mathbb U) \to  {\rm BMO}(\mathbb R)$ is 
an isomorphism onto the image.
\end{theorem}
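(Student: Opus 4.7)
The plan is to reduce the theorem to the classical Fefferman--Stein correspondence between $\mathrm{BMO}$ on $\mathbb R$ and harmonic Carleson measures on $\mathbb U$. The key step will be to establish the two-sided norm equivalence
\[
\|b(\phi)\|_{*} \asymp \|\lambda^1_\phi\|_{c}^{1/2}, \qquad \phi \in {\rm BMOA}(\mathbb U).
\]
Once this is proved, linearity of $b$ is clear, and injectivity modulo constants follows from the Poisson reproducing formula, which applies because ${\rm BMOA}(\mathbb U) \subset H^2(\mathbb U)$ has non-tangential boundary values that determine the holomorphic function. As a preliminary reduction, writing $\phi = u + iv$ with $u$, $v$ conjugate harmonic, the Cauchy--Riemann equations give $|\phi'|^2 = |\nabla u|^2$, so $\lambda^1_\phi$ coincides with the harmonic measure $|\nabla u|^2 y\,dx\,dy$ and $u$ is the Poisson extension of $u_0 = {\rm Re}(b(\phi))$.

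For the easier direction $\|b(\phi)\|_* \lesssim \|\lambda^1_\phi\|_c^{1/2}$, I would fix a bounded interval $I \subset \mathbb R$, set $c_I = u(x_I + i|I|)$ for $x_I$ the midpoint, and obtain
\[
\frac{1}{|I|}\int_I |u_0(x) - c_I|^2 \, dx \;\lesssim\; \frac{1}{|I|}\int_{T(CI)} |\nabla u|^2 y\, dx\, dy \;\leq\; C\,\|\lambda^1_\phi\|_c
\]
via Green's identity on a slightly enlarged Carleson box $T(CI)$ together with the standard pointwise estimate $|u(z)-u(w)|^2 \lesssim \int_{T(CI)} |\nabla u|^2 y\, dx\,dy$ on interior points. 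Cauchy--Schwarz then converts the $L^2$-oscillation into the $L^1$-oscillation controlling the BMO norm.

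The converse $\|\lambda^1_\phi\|_c^{1/2} \lesssim \|b(\phi)\|_*$ is the technical heart of the argument. For a fixed interval $I$, I would decompose $u_0 - (u_0)_{2I} = g_1 + g_2$ with $g_1 = (u_0 - (u_0)_{2I})\chi_{2I}$. The John--Nirenberg inequality yields $\|g_1\|_{L^2}^2 \lesssim |I|\,\|u_0\|_*^2$, and the Plancherel-type Littlewood--Paley identity then bounds $\int_{\mathbb U} |\nabla Pg_1|^2 y\,dx\,dy$ by this quantity. For the far piece $g_2$, a dyadic BMO decomposition combined with Poisson-kernel decay gives the pointwise bound $|\nabla Pg_2(z)| \lesssim \|u_0\|_*/y$ throughout $T(I)$, producing a Carleson estimate of the correct order. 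Combining these contributions produces $\int_{T(I)} |\phi'|^2 y\,dx\,dy \lesssim |I|\,\|b(\phi)\|_*^2$ uniformly in $I$. The main obstacle is precisely the clean assembly of these two estimates --- which is the content of the Fefferman--Stein theorem in its upper-half-plane form --- and a complete treatment along these lines appears in \cite[Theorems 9.17 and 9.19]{Zh}, as cited in the excerpt.
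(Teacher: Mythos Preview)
Your sketch is correct and matches the paper's treatment: the paper does not give its own argument for this theorem but simply cites \cite[Theorems 9.17 and 9.19]{Zh}, and your Fefferman--Stein outline (harmonic Carleson measure versus boundary BMO, with the near/far decomposition and John--Nirenberg) is precisely the content of that reference. There is nothing to add beyond what you have already written.
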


\section{BMO embeddings and chord-arc curves}\label{curves}
We generalize strongly quasisymmetric homeomorphisms $\mathbb R \to \mathbb R$ to 
BMO embeddings $\gamma:\mathbb R \to \mathbb C$ and consider those whose images are chord-arc curves.
These are defined below, but we note here that a BMO embedding is a mapping $\gamma$ of $\mathbb R$, that is, not only its image 
$\Gamma=\gamma(\mathbb R)$ but also with its parametrization, whereas a chord-arc curve refers to the image $\Gamma$ 
of a certain special embedding $\gamma$. 

\begin{definition}
A homeomorphic embedding $\gamma:\mathbb R \to \mathbb C$ passing through $\infty$ is called a
{\it BMO embedding} if there is a quasiconformal homeomorphism $G$ of $\mathbb C$ onto itself with $G|_{\mathbb R}=\gamma$
whose complex dilatation $\mu=\bar \partial G/\partial G$ satisfies
$\mu|_{\mathbb U} \in \mathcal M(\mathbb U)$ and $\mu|_{\mathbb L} \in \mathcal M(\mathbb L)$.
Such a map $G$ is called a {\it BMO quasiconformal homeomorphism}.
\end{definition}

\begin{definition}
The image $\Gamma=\gamma(\mathbb R)$ of a homeomorphic embedding $\gamma:\mathbb R \to \mathbb C$ 
passing through $\infty$ is called a {\it chord-arc curve} if $\Gamma$ is locally rectifiable and there exists
a constant $K \geq 1$ such that the length of the arc $\gamma([a,b])$
for any $a, b \in \mathbb R$ with $a<b$ is bounded by $K|\gamma(a)-\gamma(b)|$.
\end{definition}

In a similar way, we can also define bounded BMO embeddings and bounded chord-arc curves, 
which are mappings of the unit circle $\mathbb S$ into $\mathbb C$ not passing through $\infty$.
However, for the convenience of the arguments, we consider the unbounded case in this paper.

The image $\Gamma=\gamma(\mathbb R)$ of a homeomorphic embedding $\gamma:\mathbb R \to \mathbb C$ 
passing through $\infty$ is called a 
quasicircle if $\Gamma$ is the image of $\mathbb R$ under a quasiconformal homeomorphism of $\mathbb C$.
This is known to be equivalent to
satisfying a weaker condition than the above
by replacing the length of $\gamma([a,b])$ with the diameter of $\gamma([a,b])$ 
even though $\Gamma$ is not necessarily locally rectifiable (see \cite[Theorems IV.4, 5]{Ah}). 
Hence, a chord-arc curve is a quasicircle. The corresponding characterization of a chord-arc by
the image of $\mathbb R$ was shown in \cite[Proposition 1.13]{JK} as follows.

\begin{proposition}\label{biLip}
$\Gamma$ is chord-arc curve if and only if $\Gamma$ is 
the image of $\mathbb R$ under a bi-Lipschitz homeomorphism of $\mathbb C$ with respect to the Euclidean metric.
\end{proposition}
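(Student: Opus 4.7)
The plan is to prove the two implications separately. The $(\Leftarrow)$ direction is a direct calculation with bi-Lipschitz constants, while the $(\Rightarrow)$ direction is the substantive one: once $\Gamma$ is parametrized by arc length, the problem is to extend that parametrization bi-Lipschitzly to the whole plane, and this extension is where the real work lies.

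For $(\Leftarrow)$, suppose $F:\mathbb C\to\mathbb C$ is $L$-bi-Lipschitz with $F(\mathbb R)=\Gamma$ and set $\gamma=F|_{\mathbb R}$. For $a<b$, the Lipschitz bound on $F$ makes $\gamma|_{[a,b]}$ rectifiable with length at most $L(b-a)$, while the Lipschitz bound on $F^{-1}$ gives $|\gamma(a)-\gamma(b)|\ge L^{-1}(b-a)$. Combining these two estimates yields the chord-arc inequality with constant $K=L^{2}$, and local rectifiability is automatic.

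For $(\Rightarrow)$, I would first re-parametrize $\Gamma$ by arc length. Local rectifiability together with the chord-arc bound ensures that the total length of $\Gamma$ is infinite in both directions, so there is a unique (up to translation of the parameter) arc-length parametrization $\sigma:\mathbb R\to\Gamma$. By construction $|\sigma(s_1)-\sigma(s_2)|\le|s_1-s_2|$, and re-expressing the chord-arc inequality in terms of $\sigma$ gives $|\sigma(s_1)-\sigma(s_2)|\ge K^{-1}|s_1-s_2|$. Hence $\sigma$ is $(1,K^{-1})$-bi-Lipschitz onto $\Gamma$. What remains is to extend $\sigma$ to a bi-Lipschitz self-map $F:\mathbb C\to\mathbb C$. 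My plan is a normal-collar construction near $\Gamma$, glued to bi-Lipschitz maps of the two complementary Jordan domains $\Omega^{\pm}$. Since $|\sigma'|=1$ almost everywhere, I would pick the a.e.\ defined unit normal $n(s)=i\sigma'(s)$, mollify it at scale $|t|$ to obtain a Lipschitz field $\tilde n(s,|t|)$, and set
\[
F(s,t)=\sigma(s)+t\,\tilde n(s,|t|)
\]
on a strip $\{|t|<\varepsilon\}$. Outside the strip, the chord-arc hypothesis makes $\Omega^{\pm}$ into chord-arc domains that can be mapped bi-Lipschitzly onto the corresponding exteriors of the strip so as to match $F$ on $\{|t|=\varepsilon\}$, and the final gluing is a routine partition-of-unity argument.

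The main obstacle will be verifying that the collar map is truly bi-Lipschitz on the entire strip and not merely locally injective. This reduces to controlling how fast the tangent direction of $\sigma$ can oscillate at scales comparable to $|t|$; the chord-arc hypothesis is precisely what rules out pathological wiggling and is what provides uniform upper and lower Lipschitz bounds for $F$ on the collar. Making this oscillation estimate rigorous, together with ensuring the direction of $\tilde n(s,|t|)$ never falls parallel to the direction of $\sigma'$ on the smoothed scale, constitutes the technical heart of the argument and is the only place where one genuinely uses the chord-arc quantitative bound rather than just rectifiability.
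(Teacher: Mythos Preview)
The paper does not prove this proposition itself; it attributes the result to Jerison--Kenig \cite[Proposition 1.13]{JK}. So there is no in-paper argument to compare against, only the citation.

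Your $(\Leftarrow)$ direction is correct and essentially complete.

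Your $(\Rightarrow)$ direction has a genuine circularity. You correctly observe that the arc-length parametrization $\sigma:\mathbb R\to\Gamma$ is bi-Lipschitz onto its image, so the whole problem reduces to extending a bi-Lipschitz embedding $\sigma:\mathbb R\to\mathbb C$ to a bi-Lipschitz self-map of $\mathbb C$. Your collar construction addresses a neighborhood of $\Gamma$, but then you write that ``the chord-arc hypothesis makes $\Omega^{\pm}$ into chord-arc domains that can be mapped bi-Lipschitzly onto the corresponding exteriors of the strip so as to match $F$ on $\{|t|=\varepsilon\}$.'' This step is precisely the statement you are trying to prove, applied to the curves $F(\{t=\pm\varepsilon\})$. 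Those curves are of the form $s\mapsto\sigma(s)+\varepsilon\,\tilde n(s,\varepsilon)$; the mollified normal $\tilde n(\cdot,\varepsilon)$ is smooth, but $\sigma$ is still only Lipschitz, so the new boundaries are again merely chord-arc and no easier to handle than $\Gamma$ itself. You have not reduced the problem.

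What is actually needed here is Tukia's bi-Lipschitz extension theorem (P.~Tukia, \emph{The planar Schoenflies theorem for Lipschitz maps}, Ann.\ Acad.\ Sci.\ Fenn.\ 5 (1980), 49--72): every bi-Lipschitz embedding of $\mathbb R$ into $\mathbb R^2$ extends to a bi-Lipschitz homeomorphism of $\mathbb R^2$. That theorem is the substantive content of the $(\Rightarrow)$ direction, and its proof does involve a multi-scale collar-type construction---but one that fills the entire complementary domains rather than stopping at a fixed $\varepsilon$ and appealing to an unproved extension on the remainder. If you want a self-contained argument, you must carry out that full construction; otherwise, the honest route is to cite Tukia (or Jerison--Kenig, as the paper does).
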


We prepare the following lemma used throughout this paper. 
The statement is known to be true and has been already used before in several places.
This result originates in \cite[Lemma 10]{CZ}. There is an exposition in \cite{WM-5}.

\begin{lemma}\label{composition}
Let $F$ be a quasiconformal homeomorphism of $\mathbb U$ onto itself
that is bi-Lipschitz
with respect to the hyperbolic metric whose complex dilatation $\nu$ is in $\mathcal M(\mathbb U)$.
Then, the complex dilatation of
$F^{-1}$ belongs to $\mathcal M(\mathbb U)$.
Let $H$ be a quasiconformal homeo\-morphism of $\mathbb U$ into $\mathbb C$ 
whose complex dilatation $\mu$ is in $\mathcal M(\mathbb U)$. Then, the complex dilatation of
$H \circ F$, denoted by $F^*\mu$, also belongs to $\mathcal M(\mathbb U)$.
\end{lemma}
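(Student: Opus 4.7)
The plan is to handle the two assertions by reducing each one to the same change-of-variables estimate for Carleson measures under a bi-Lipschitz hyperbolic quasiconformal self-map of $\mathbb U$.

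For the first assertion, the pointwise identity $|\mu_{F^{-1}}(F(z))|=|\mu_F(z)|$ yields $\Vert\mu_{F^{-1}}\Vert_\infty=\Vert\mu_F\Vert_\infty<1$, so $\mu_{F^{-1}}\in M(\mathbb U)$ is immediate. For the Carleson condition I would fix an interval $I\subset\mathbb R$ in the target and change variables $w=F(z)$ in
\[
\int_{T(I)} |\mu_{F^{-1}}(w)|^2\,\frac{du\,dv}{\mathrm{Im}\,w}.
\]
The infinitesimal bi-Lipschitz hyperbolic condition $|F'(z)|\asymp\mathrm{Im}\,F(z)/\mathrm{Im}\,z$ yields $J_F(z)/\mathrm{Im}\,F(z)\asymp\mathrm{Im}\,F(z)/(\mathrm{Im}\,z)^2$, and the same condition forces $F^{-1}(T(I))$ to lie inside a Carleson box $T(I')$ in the $z$-plane with $|I'|\asymp|f^{-1}(I)|$, where $f=F|_{\mathbb R}$ is the boundary extension. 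Combining these with the uniform upper bound $\mathrm{Im}\,F(z)\leq|I|$ on $F^{-1}(T(I))$ reduces the estimate to the hypothesis that $|\mu_F|^2\,dxdy/\mathrm{Im}\,z$ is Carleson on $T(I')$.

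For the second assertion, I would apply the standard cocycle formula for Beltrami coefficients of a composition, which together with the uniform bounds $\Vert\mu_F\Vert_\infty,\Vert\mu_H\Vert_\infty<1$ gives the pointwise estimate
\[
|\mu_{H\circ F}(z)|^2\;\lesssim\;|\mu_F(z)|^2+|\mu_H(F(z))|^2.
\]
The first term contributes a Carleson measure by hypothesis, so the task reduces to checking that $|\mu_H(F(z))|^2\,dxdy/\mathrm{Im}\,z$ is Carleson on $\mathbb U$. This is exactly the same kind of change-of-variables estimate as in the first assertion, now applied to $F$ rather than $F^{-1}$.

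The main obstacle is the change-of-variables lemma itself. A naive push or pull of the weighted measure $|\mu|^2\,dxdy/\mathrm{Im}\,z$ under $F$ produces an extra Jacobian factor of $\mathrm{Im}\,F(z)/\mathrm{Im}\,z\asymp|F'(z)|$ that is not globally bounded, reflecting the fact that the boundary map $f$ is only strongly quasisymmetric and can stretch lengths on $\mathbb R$ arbitrarily. The resolution combines the top-scale constraint $\mathrm{Im}\,F(z)\leq|I|$ coming from $F(z)\in T(I)$ with the geometric fact that $F^{-1}(T(I))$ is trapped inside a single $z$-plane Carleson box of height $\asymp|f^{-1}(I)|$; this is essentially the content of \cite[Lemma 10]{CZ} with an exposition in \cite{WM-5}, and I would invoke that argument at the technical level rather than carry through the full estimate here.
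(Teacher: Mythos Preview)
The paper itself gives no proof of this lemma; it only records that the statement ``originates in \cite[Lemma 10]{CZ}'' with an exposition in \cite{WM-5}. Your proposal does the same at the end, so in that sense you match the paper. But you also volunteer a sketch of the mechanism, and that sketch has a real gap.

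After the change of variables and the bi-Lipschitz substitution $J_F/\operatorname{Im}F\asymp\operatorname{Im}F/(\operatorname{Im}z)^2$, your crude bound $\operatorname{Im}F(z)\le|I|$ produces
\[
|I|\int_{T(I')}|\mu_F(z)|^2\,\frac{dx\,dy}{(\operatorname{Im}z)^{2}},
\]
and this is \emph{not} controlled by the Carleson hypothesis $\int_{T(I')}|\mu_F|^2\,dx\,dy/\operatorname{Im}z\lesssim|I'|$. For instance, take $|\mu_F(z)|^2=\operatorname{Im}z/|I'|$ on $T(I')$: the measure $|\mu_F|^2\,dx\,dy/\operatorname{Im}z$ is Carleson, yet the $(\operatorname{Im}z)^{-2}$-integral diverges. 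So the two ingredients you name---the top-scale bound $\operatorname{Im}F\le|I|$ and the containment $F^{-1}(T(I))\subset T(I')$---are necessary but do not by themselves ``reduce the estimate to the hypothesis''.

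What the Cui--Zinsmeister argument actually uses, beyond what you list, is a scale-by-scale accounting: one works with a Whitney (or dyadic) decomposition on which the ratio $\operatorname{Im}F(z)/\operatorname{Im}z$ is essentially constant per piece, so the bi-Lipschitz hypothesis turns the estimate into a reweighted Carleson-sequence condition. The reweighting is then handled by the $A_\infty$ property of the boundary map $f=F|_{\mathbb R}$ (available here via Theorem~\ref{basic} since $\nu\in\mathcal M(\mathbb U)$), which is exactly what lets one pass from $\sum a_Q|Q|\lesssim|I|$ to $\sum a_Q|f^{-1}(Q)|\lesssim|f^{-1}(I)|$. Since you explicitly defer the technical details to \cite{CZ} and \cite{WM-5}, your overall plan is fine; but your summary of what those references do underestimates the work involved.
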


From this lemma, we also see that if $f:\mathbb R \to \mathbb R$ is 
a strongly quasisymmetric homeomorphism and $h:\mathbb R \to \mathbb C$
is a BMO embedding, then $h \circ f:\mathbb R \to \mathbb C$ is also a BMO embedding.
Indeed, by Theorem \ref{FKP}, we can take the bi-Lipschitz quasiconformal extension $F$ of $f$.

We first consider the derivative of a BMO embedding $\gamma:\mathbb R \to \mathbb C$.
The following claim has been proved in a more general setting in \cite[Theorem 6.2]{Mac}, but to present 
a typical argument in this section and also introduce the notation, we show
our proof here.

\begin{proposition}\label{curve}
A BMO embedding 
$\gamma:\mathbb R \to \mathbb C$ has its derivative $\gamma'$ almost everywhere on $\mathbb R$
and $\log \gamma'$
belongs to ${\rm BMO}(\mathbb R)$. 
\end{proposition}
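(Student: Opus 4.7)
The plan is to reduce the statement to what is already known for strongly quasisymmetric maps and for conformal maps with BMO log-derivative by producing a Bers-type factorization $\gamma = H_1|_{\mathbb R}\circ f$, where $f\in{\rm SQS}$ and $H_1:\mathbb C\to\mathbb C$ is conformal on $\mathbb U$. Starting from a BMO quasiconformal extension $G$ of $\gamma$ with complex dilatation $\mu$, I would let $\tilde\mu_{\mathbb L}$ denote the coefficient equal to $\mu$ on $\mathbb L$ and $0$ on $\mathbb U$, and take $H_1 = H_{\tilde\mu_{\mathbb L}}$ in the notation of Section 2.5. Then $H_1$ is conformal on $\mathbb U$ and has complex dilatation $\mu|_{\mathbb L}\in\mathcal M(\mathbb L)$. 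Setting $F := H_1^{-1}\circ G$, a direct check of complex dilatations shows that $F|_{\mathbb U}:\mathbb U\to\mathbb U$ carries dilatation $\mu|_{\mathbb U}\in\mathcal M(\mathbb U)$ while $F|_{\mathbb L}$ is conformal. Theorem \ref{basic} then yields $f := F|_{\mathbb R}\in{\rm SQS}$, and by construction $\gamma = H_1|_{\mathbb R}\circ f$ on $\mathbb R$.

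With this decomposition in hand, the BMO estimate falls out cleanly. Theorem \ref{Guo11}(b) applied to $H_1$ gives $\log H_1'\in{\rm BMOA}(\mathbb U)$, and Theorem \ref{131} identifies its non-tangential boundary trace with an element of ${\rm BMO}(\mathbb R)$. Since $f\in{\rm SQS}$, Theorem \ref{pullback} says that $P_f:u\mapsto u\circ f$ is bounded on ${\rm BMO}(\mathbb R)$, so $(\log H_1')\circ f\in{\rm BMO}(\mathbb R)$. The other summand $\log f'$ lies in ${\rm BMO}_{\mathbb R}^*(\mathbb R)\subset{\rm BMO}(\mathbb R)$ by the very definition of ${\rm SQS}$. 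Where the chain rule
\[
\log\gamma'(x) = (\log H_1')(f(x)) + \log f'(x)
\]
is valid, it therefore yields $\log\gamma'\in{\rm BMO}(\mathbb R)$.

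The step I expect to be the main obstacle is the almost-everywhere existence of $\gamma'$ that legitimizes the chain rule above. The factor $f'$ exists a.e.\ because $f\in{\rm SQS}$ is locally absolutely continuous. For the boundary behavior of $H_1$, the non-tangential limit of $H_1'$ exists a.e.\ on $\mathbb R$ because $\log H_1'\in{\rm BMOA}(\mathbb U)$. Promoting this to an a.e.\ tangential derivative of $H_1|_{\mathbb R}$ that agrees with the non-tangential limit would use the Bishop--Jones condition on the quasicircle $H_1(\mathbb R)$ supplied by Theorem \ref{Guo11}(d): it provides enough rectifiable boundary pieces to run an F.\ Riesz-type boundary-differentiation argument for the conformal map $H_1|_{\mathbb U}$. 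Once this a.e.\ differentiability is secured, the decomposition-based estimate above completes the proof.
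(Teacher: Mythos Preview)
Your factorization $\gamma = H_1|_{\mathbb R}\circ f$ breaks down at the step ``$F := H_1^{-1}\circ G$ maps $\mathbb U$ to $\mathbb U$ and is conformal on $\mathbb L$.'' The map $H_1$ is conformal on $\mathbb U$ with dilatation $\mu|_{\mathbb L}$ on $\mathbb L$, but there is no reason for $H_1(\mathbb R)$ to coincide with $G(\mathbb R)=\gamma(\mathbb R)$: the two maps have different dilatations on $\mathbb U$ (namely $0$ versus $\mu|_{\mathbb U}$), so they send $\mathbb R$ to different quasicircles. Consequently $F=H_1^{-1}\circ G$ does \emph{not} preserve $\mathbb R$, $f=F|_{\mathbb R}$ is not a self-map of $\mathbb R$, and Theorem~\ref{basic} does not apply. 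The ``direct check of complex dilatations'' you invoke for $F|_{\mathbb L}$ being conformal fails because the dilatation of $H_1^{-1}$ at $G(z)$ depends on where $H_1^{-1}(G(z))$ lies, and this point need not be in $\mathbb L$ when $z\in\mathbb L$.

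The paper's proof resolves this by reversing the order: it defines $F$ \emph{first}, as the quasiconformal self-map of $\mathbb C$ with symmetric dilatation ($\mu_1$ on $\mathbb U$, $\overline{\mu_1(\bar z)}$ on $\mathbb L$), guaranteeing $F(\mathbb R)=\mathbb R$ and hence $f=F|_{\mathbb R}\in{\rm SQS}$ by Theorem~\ref{basic}. Only then is $H$ defined, with dilatation $F_*\mu_2$ on $\mathbb L$, so that $H\circ F=G$. The price is that one must check $F_*\mu_2\in\mathcal M(\mathbb L)$; this is where Theorem~\ref{FKP} (to make $F$ bi-Lipschitz) and Lemma~\ref{composition} enter. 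For the almost-everywhere differentiability of $h=H|_{\mathbb R}$, the paper appeals not to Bishop--Jones rectifiability but to Pommerenke's results on angular derivatives for conformal maps onto quasidisks (\cite[Proposition 4.7, Theorem 5.5]{Pom}), which directly identify the non-tangential limit of $H'$ with the tangential derivative $h'$.
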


\begin{proof}
Let $G:\mathbb C \to \mathbb C$ be a BMO quasiconformal homeomorphism associated with $\gamma$,
and set
$\mu_1=\mu|_{\mathbb U} \in \mathcal M(\mathbb U)$ and $\mu_2=\mu|_{\mathbb L} \in \mathcal M(\mathbb L)$
for the complex dilatation $\mu$ of $G$.
We take a quasiconformal homeomorphism $F:\mathbb C \to \mathbb C$ whose complex dilatation is
$\mu_1(z)$ for $z \in \mathbb U$ and $\overline{\mu_1(\bar z)}$ for $z \in \mathbb L$, which maps $\mathbb R$ onto itself.
By Theorem \ref{basic}, $f=F|_{\mathbb R}$ is strongly quasisymmetric and $\log f'$ belongs to ${\rm BMO}_{\mathbb R}^*(\mathbb R)$.
By Theorem \ref{FKP}, we may assume that $F$ is bi-Lipschitz on $\mathbb L$ by replacing the quasiconformal extension of $f$.
Next, we take a quasiconformal homeomorphism $H:\mathbb C \to \mathbb C$ that is conformal on $\mathbb U$ and
whose complex dilatation on $\mathbb L$ is the push-forward $F_* \mu_2$ of $\mu_2$ by $F$. Namely, 
the complex dilatation of $H \circ F|_{\mathbb L}$ is $\mu_2$. Then, $H \circ F$ coincides with $G$ up to
an affine transformation of $\mathbb C$, and hence, we may assume that $H \circ F=G$.

The complex dilatation $F_* \mu_2=(F^{-1})^* \mu_2$ belongs to $\mathcal M(\mathbb L)$ by Lemma \ref{composition}.
Then, 
${\mathcal L}_{H|_{\mathbb U}}=\log (H|_{\mathbb U})' \in {\rm BMOA}(\mathbb U)$ by Theorem \ref{Guo11}, and
it follows from Theorem \ref{131} that the boundary function $b(\log (H|_{\mathbb U})')$ 
defined by the non-tangential limit of $\log (H|_{\mathbb U})'$ belongs to
${\rm BMO}(\mathbb R)$. 
Since $\log (H|_{\mathbb U})'$ has the finite non-tangential limit 
almost everywhere on $\mathbb R$, so does $(H|_{\mathbb U})'$.
This implies that $H|_{\mathbb U}$ has a finite angular derivative almost everywhere
on $\mathbb R$ by \cite[Proposition 4.7]{Pom}. However, since $H(\mathbb U)$ is a quasidisk,
\cite[Theorem 5.5]{Pom} asserts that the angular derivative at $x \in \mathbb R$ coincides with
$$
h'(x)=\lim_{\mathbb R \ni \xi \to x} \frac{h(\xi)-h(x)}{\xi-x}
$$
for $h=H|_{\mathbb R}$.
This shows that the non-tangential limit of $(H|_{\mathbb U})'$ coincides with the ordinary derivative $h'$
almost everywhere on $\mathbb R$. By taking the logarithm, we have
$b(\log (H|_{\mathbb U})')=\log h'$.

By $H \circ F=G$, we see that $\gamma=G|_{\mathbb R}$ has the derivative $\gamma'$ almost everywhere on $\mathbb R$, 
and satisfies
$$
\log h' \circ f + \log f'=\log \gamma'.
$$
We have seen that $\log f' \in {\rm BMO}(\mathbb R)$.
Since $f$ is strongly quasisymmetric and $\log h' \in {\rm BMO}(\mathbb R)$,
Theorem \ref{pullback} shows that $\log h' \circ f \in {\rm BMO}(\mathbb R)$.
Thus, we
obtain that $\log \gamma' \in {\rm BMO}(\mathbb R)$.
\end{proof}

Theorem \ref{Guo11} implies that any quasicircle $\Gamma$ satisfying the Bishop--Jones condition is 
the image $\gamma(\mathbb R)$ of some BMO embedding $\gamma$. 
(Conversely, 
the image $\gamma(\mathbb R)$ of any BMO embedding $\gamma$ 
is a quasicircle $\Gamma$ satisfying the Bishop--Jones condition. 
Indeed, in the proof of Proposition \ref{curve}, we see that $\Gamma=\gamma(\mathbb R)=h(\mathbb R)$ for 
$h=H|_{\mathbb R}$ with $\log (H|_{\mathbb U})' \in {\rm BMOA}(\mathbb U)$. Then by Theorem \ref{Guo11},
$\Gamma$ is a quasicircle satisfying the Bishop--Jones condition.) Since
a chord-arc curve is the image of $\mathbb R$
under some bi-Lipschitz homeomorphism of $\mathbb C$ by Proposition \ref{biLip} and
since the Bishop--Jones condition is invariant under a bi-Lipschitz homeomorphism of $\mathbb C$, 
any chord-arc curve satisfies the Bishop--Jones condition. Thus, we have:

\begin{proposition}\label{image}
Any chord-arc curve $\Gamma$ is the image $\gamma(\mathbb R)$ of some BMO embedding $\gamma$.
\end{proposition}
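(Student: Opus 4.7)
The plan is to obtain $\gamma$ as (the boundary values of) a Riemann map from $\mathbb{U}$ onto one of the two components of $\mathbb{C}\setminus\Gamma$, and then to verify the two defining conditions of a BMO embedding using Theorem \ref{Guo11}. The crucial benefit of this choice is that the complex dilatation of the resulting $G$ on $\mathbb{U}$ is automatically zero, so only the condition on $\mathbb{L}$ needs to be checked.

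First I would verify that $\Gamma$ satisfies the Bishop--Jones condition. By Proposition \ref{biLip}, there is a bi-Lipschitz homeomorphism $\Phi$ of $\mathbb{C}$ with $\Phi(\mathbb{R}) = \Gamma$. The real line itself trivially satisfies the Bishop--Jones condition: for any $z$ in either half-plane, one can take $\Omega_z$ to be the Euclidean disk of radius comparable to $d(z,\mathbb{R})$ centered at $z$, whose boundary meets $\mathbb{R}$ in a symmetric arc of definite harmonic measure with respect to $z$. Since the Bishop--Jones condition is invariant under bi-Lipschitz self-maps of $\mathbb{C}$ (as stated right after Theorem \ref{Guo11}), applying $\Phi$ transfers the property to $\Gamma$. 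In addition, $\Gamma$ is a quasicircle, because bi-Lipschitz homeomorphisms are quasiconformal.

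Next, since $\Gamma$ is a quasicircle passing through $\infty$, the complementary component lying on the left of $\Gamma$ is a quasidisk $\Omega$. Take the conformal map $H: \mathbb{U} \to \Omega$ normalized so that $H(\infty) = \infty$. Because $\Omega$ is a quasidisk, $H$ admits a quasiconformal extension to all of $\mathbb{C}$; let $\mu$ denote the complex dilatation of this extension restricted to $\mathbb{L}$. By the implication (d) $\Rightarrow$ (a) of Theorem \ref{Guo11}, the Bishop--Jones condition established in the previous step gives $\mu \in \mathcal{M}(\mathbb{L})$.

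Finally, set $\gamma := H|_{\mathbb{R}}$ and take $G$ to be the quasiconformal extension of $H$ produced above. Then $G$ is conformal on $\mathbb{U}$, so its complex dilatation on $\mathbb{U}$ is the zero element of $\mathcal{M}(\mathbb{U})$, while its complex dilatation on $\mathbb{L}$ is $\mu \in \mathcal{M}(\mathbb{L})$. Thus $G$ is a BMO quasiconformal homeomorphism, $\gamma$ is a BMO embedding, and $\gamma(\mathbb{R}) = \partial H(\mathbb{U}) = \Gamma$ as required. No serious obstacle arises: the entire proof is the combination of Proposition \ref{biLip}, the trivial Bishop--Jones property of $\mathbb{R}$, the quoted bi-Lipschitz invariance, and Theorem \ref{Guo11}.
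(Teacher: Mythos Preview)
Your argument is essentially identical to the paper's, which is given in the paragraph immediately preceding the proposition: use Proposition~\ref{biLip} together with the bi-Lipschitz invariance of the Bishop--Jones condition to put $\Gamma$ into case~(d) of Theorem~\ref{Guo11}, then apply (d)~$\Rightarrow$~(a) to the Riemann map and note that the complex dilatation on $\mathbb{U}$ is zero. One trivial correction: a Euclidean disk \emph{centered at} $z\in\mathbb{U}$ cannot simultaneously be contained in $\mathbb{U}$ and have $\partial\Omega_z\cap\mathbb{R}$ of positive harmonic measure; take instead, say, the square $(x-y,x+y)\times(0,2y)$ for $z=x+iy$, or any half-disk with base on $\mathbb{R}$.
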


The converse is not true. In fact, there are examples of 
BMO embeddings whose images are not locally rectifiable (see \cite{Bi} and \cite{Se0}). 
On the contrary, we can show its converse if we assume the $A_{\infty}$ property on the derivative. 
The following claim in 
\cite[Theorem 4.2]{JK} has an essential role for this argument. 
In the case of the unit disk,
we may also refer to \cite[Theorem 7.11]{Pom}.
Notations are the same as those in the proof of Proposition \ref{curve}, and conditions (a) and (b) below are
always satisfied in our setting of BMO embeddings.

\begin{lemma}\label{JK4.2}
Suppose that $\Gamma$ is a locally rectifiable Jordan curve passing through $\infty$
and $H$ is a conformal homeomorphism of $\mathbb U$ onto the left domain bounded by $\Gamma$
with its extension $h$ to $\mathbb R$. If 
\begin{enumerate}
\item[(a)]
$\Gamma$ is a quasicircle,
\item[(b)]
$\log |H'(z)|$ is represented by the Poisson integral of its boundary function, and
\item[(c)]
$|h'|$ is an $A_{\infty}$-weight,
\end{enumerate}
then $\Gamma$ is a chord-arc curve. 
\end{lemma}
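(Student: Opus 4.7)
The plan is to verify the chord-arc condition directly: given $a<b$ in $\mathbb R$, write $I=(a,b)$, let $x_I$ be its center, and set $z_I := x_I + i|I|$ (the ``apex'' of the Carleson box over $I$). I will bound the arc length $\int_I |h'(t)|\,dt$ above through $|H'(z_I)|$, and in turn bound this against the chord length $|h(b)-h(a)|$.

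The upper bound proceeds in two moves. First, hypothesis (c) together with the inverse Jensen characterization of $A_\infty$-weights gives
\[
\frac{1}{|I|}\int_I |h'(t)|\,dt \le C_\infty \exp\Bigl(\frac{1}{|I|}\int_I \log|h'(t)|\,dt\Bigr).
\]
Second, since $|h'|\in A_\infty$, we have $\log|h'|\in {\rm BMO}(\mathbb R)$, and hypothesis (b) identifies $\log|H'(z_I)|$ with the Poisson integral $\int_{\mathbb R} P_{z_I}(t)\log|h'(t)|\,dt$. On $I$ the Poisson kernel is comparable to $1/|I|$, while outside $I$ its decay against the John--Nirenberg tail of $\log|h'|$ is controlled by the BMO norm, yielding
\[
\Bigl|\log|H'(z_I)| - \frac{1}{|I|}\int_I \log|h'(t)|\,dt\Bigr| \le C\|\log|h'|\|_*.
\]
Combining the two estimates, $\int_I |h'(t)|\,dt \lesssim |I|\cdot|H'(z_I)|$.

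For the lower bound, Koebe's distortion theorem applied to the univalent map $H$ on $\mathbb U$ gives $|I|\cdot|H'(z_I)| \asymp {\rm dist}(H(z_I),\Gamma)$. Since $\Gamma$ is a quasicircle by (a), the bounded-turning (three-point) property gives ${\rm diam}\,h([a,b]) \le C|h(b)-h(a)|$, and the standard geometric comparison for conformal images of Carleson boxes against quasicircles (a consequence of the three-point condition and Koebe) gives ${\rm dist}(H(z_I),\Gamma) \asymp {\rm diam}\,h([a,b])$. Chaining these estimates, $\int_I |h'(t)|\,dt \lesssim |h(b)-h(a)|$, which is exactly the chord-arc condition with $K$ depending only on the $A_\infty$-constant of $|h'|$, the BMO norm of $\log|h'|$, and the quasicircle constant of $\Gamma$.

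The main obstacle I anticipate is the BMO comparison of $\log|H'(z_I)|$ to the local mean of $\log|h'|$ over $I$: without hypothesis (b), there is no a priori link between the interior harmonic function $\log|H'|$ and the boundary integral means of $|h'|$, so (b) is used crucially here. Once this comparison is secured via John--Nirenberg, the remaining ingredients ($A_\infty$ reverse Jensen, Koebe distortion, and quasicircle bounded turning) are classical and combine straightforwardly.
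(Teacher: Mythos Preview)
The paper does not supply a proof of this lemma; it is quoted from Jerison--Kenig \cite[Theorem 4.2]{JK} and used as a black box in the proof of Theorem~\ref{Ainfty}. Your argument is correct and is essentially the Jerison--Kenig argument: the $A_\infty$ reverse Jensen inequality together with the BMO comparison between $\log|H'(z_I)|$ and the mean of $\log|h'|$ over $I$ (this is exactly where hypothesis (b) enters) gives $\int_I|h'|\lesssim |I|\,|H'(z_I)|$, and then Koebe plus the quasicircle geometry bounds the right-hand side by $|h(b)-h(a)|$.

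The only step worth tightening is ${\rm dist}(H(z_I),\Gamma)\lesssim |h(b)-h(a)|$, which you route through ${\rm diam}\,h([a,b])$ and bounded turning, citing it as ``a consequence of the three-point condition and Koebe.'' A cleaner justification: since $\Gamma$ is a quasicircle through $\infty$, the map $H$ extends to a $K$-quasiconformal self-map of $\mathbb C$ fixing $\infty$, hence is globally $\eta$-quasisymmetric in the Euclidean metric; then $|z_I-a|\asymp|a-b|$ gives directly $|H(z_I)-h(a)|\lesssim|h(a)-h(b)|$, so ${\rm dist}(H(z_I),\Gamma)\le|H(z_I)-h(a)|\lesssim|h(b)-h(a)|$. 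This bypasses the two-sided comparison ${\rm dist}(H(z_I),\Gamma)\asymp{\rm diam}\,h([a,b])$, whose full strength you do not need. With this adjustment your proof is complete.
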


We obtain the characterization of a chord-arc curve as follows.

\begin{theorem}\label{Ainfty}
The image of a BMO embedding $\gamma:\mathbb R \to \mathbb C$ is a chord-arc curve if and only if 
$|\gamma'|$ is an $A_\infty$-weight on $\mathbb R$. Moreover, $\gamma$ is locally absolutely continuous in this case.
\end{theorem}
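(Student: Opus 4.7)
My approach is to reduce the equivalence to a statement about the Riemann mapping parametrization. From the proof of Proposition \ref{curve}, any BMO embedding $\gamma$ admits the decomposition $\gamma = h \circ f$, where $f \in {\rm SQS}$ and $h = H|_{\mathbb R}$ is the boundary extension of the conformal map $H:\mathbb U \to \Omega$ onto the left domain $\Omega$ bounded by $\Gamma$, with $\log H' \in {\rm BMOA}(\mathbb U)$. The chain rule gives $|\gamma'(x)| = |h'(f(x))|\, f'(x)$ a.e., and the change of variables $u = f(x)$ (valid since $f$ is locally absolutely continuous with $f'$ an $A_\infty$-weight) yields
\begin{equation*}
\int_E |\gamma'(x)|\, dx = \int_{f(E)} |h'(u)|\, du, \qquad \int_I |\gamma'(x)|\, dx = \int_{f(I)} |h'(u)|\, du
\end{equation*}
for every bounded interval $I \subset \mathbb R$ and every measurable subset $E \subset I$. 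Combining this with the characterization \eqref{SD} of $A_\infty$-weights and the estimate \eqref{alphaK} applied to both $f$ and $f^{-1}$ in ${\rm SQS}$ gives the key equivalence that $|\gamma'|$ is an $A_\infty$-weight on $\mathbb R$ if and only if $|h'|$ is. Since $\Gamma = \gamma(\mathbb R) = h(\mathbb R)$ depends only on $h$, the theorem reduces to showing that $\Gamma$ is chord-arc if and only if $|h'| \in A_\infty$.

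For the forward direction, if $\Gamma$ is chord-arc, then $\Gamma$ is locally rectifiable and the F.\ and M.\ Riesz theorem applied to $H$ gives that $h$ is locally absolutely continuous, with $\int_a^b |h'|$ equal to the length of the arc $h([a,b])$. The classical converse to Lemma \ref{JK4.2}, proved via the chord-arc condition combined with harmonic-measure estimates in the chord-arc domain $\Omega$, then yields $|h'| \in A_\infty$. Combined with the reduction, this gives $|\gamma'| \in A_\infty$, and $\gamma = h \circ f$ is locally absolutely continuous as a composition of such maps.

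For the converse, assume $|\gamma'| \in A_\infty$, so $|h'| \in A_\infty$ by the reduction. To invoke Lemma \ref{JK4.2}, I would verify its remaining hypotheses: $\Gamma$ is a quasicircle because $\gamma$ is a BMO embedding, and $\log|H'| = \operatorname{Re}(\log H')$ is the Poisson integral of $\log|h'|$ because $\log H' \in {\rm BMOA}(\mathbb U)$ equals the Cauchy integral of its boundary values, via Theorem \ref{131}. The principal obstacle is establishing that $\Gamma$ is locally rectifiable. To settle this, the plan is to exploit the reverse Hölder inequality implicit in the $A_\infty$ condition to obtain $|h'| \in L^{1+\varepsilon}_{\mathrm{loc}}(\mathbb R)$ for some $\varepsilon > 0$, then combine this with the BMOA regularity of $\log H'$ and John--Nirenberg-type estimates to control the non-tangential maximal function of $H'$ and conclude $H' \in H^{1}_{\mathrm{loc}}(\mathbb U)$; the classical Hardy-space criterion then yields local absolute continuity of $h$ and local rectifiability of $\partial \Omega = \Gamma$. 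Once this is in hand, Lemma \ref{JK4.2} delivers the chord-arc property, and the local absolute continuity of $\gamma$ follows from the decomposition.
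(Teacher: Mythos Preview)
Your overall strategy coincides with the paper's: decompose $\gamma = h \circ f$, reduce the $A_\infty$ condition on $|\gamma'|$ to one on $|h'|$ via the ${\rm SQS}$ change of variables, invoke the Lavrentiev-type theorem for the forward direction, and apply Lemma \ref{JK4.2} for the converse. The forward direction and the verification of hypotheses (a)--(c) of Lemma \ref{JK4.2} are handled just as in the paper.

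The gap is in your treatment of local rectifiability. You propose to get $|h'| \in L^{1+\varepsilon}_{\mathrm{loc}}(\mathbb R)$ from reverse H\"older and then ``control the non-tangential maximal function of $H'$'' to conclude $H' \in H^1_{\mathrm{loc}}(\mathbb U)$. This sketch suppresses the real difficulty. In the bounded case the outer-function argument works cleanly: $\log H' \in {\rm BMOA}(\mathbb D)$ forces $H'$ to be outer, and then $|h'| \in L^1(\mathbb S)$ immediately gives $H' \in H^1(\mathbb D)$. But on $\mathbb U$ there is no global integrability, and after passing to $\mathbb D$ by the Cayley map $T$, the weight $|h'| \circ T^{-1}$ need not be integrable near the point $1$ corresponding to $\infty$; a local $L^{1+\varepsilon}$ bound on $\mathbb R$ says nothing about this. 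The paper confronts this explicitly: it shows, using the H\"older continuity of the quasisymmetric map $\tilde\eta^{-1}$, that $|h'|\circ T^{-1}(\xi) \lesssim |\tilde\eta'(\xi)|\,|\xi-1|^{-m}$ for some $m>0$, and then proves via the Poisson--Jensen representation that $H'\circ T^{-1}(w)(w-1)^m \in H^1(\mathbb D)$. This weighted Hardy-space membership is what yields local rectifiability and local absolute continuity of $h$ on $\mathbb R$. Your John--Nirenberg/maximal-function outline does not supply a substitute for this control at infinity, so as written the converse direction is incomplete.
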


\begin{proof}
We still use the same notations as in
the proof of Proposition \ref{curve}. We consider the quasidisk $H(\mathbb U)$, and
suppose that this is bounded by the chord-arc curve $\Gamma=\gamma(\mathbb R)$. In this case,
the angular derivative of $H|_{\mathbb U}$ coincides with the ordinary derivative $h'$ on $\mathbb R$.
Since the arc-length of $\Gamma$ is given by the integration of $|h'|$,  
the theorem of Lavrentiev (see \cite[p.222]{JK}) implies that
$|h'|$ is an $A_\infty$-weight. 
By $H \circ F=G$, we have
$|h'| \circ f \cdot f'=|\gamma'|$ on $\mathbb R$.
Let $\eta(x)=\int_0^x |h'(t)|dt$, which is a strongly quasisymmetric homeomorphism of $\mathbb R$.
Then, so is the composition $\eta \circ f$ whose derivative coincides with $|\gamma'|$. 
Thus, we see that $|\gamma'|$ is an $A_\infty$-weight.

We borrow Lemma \ref{JK4.2} to prove the sufficiency.  Suppose that $|\gamma'|$ is an $A_\infty$-weight, which is equivalent to that $|h'|$ is an $A_\infty$-weight.
By ${\mathcal L}_{H|_{\mathbb U}}=\log (H|_{\mathbb U})' \in {\rm BMOA}(\mathbb U)$, we see that $\log |(H|_{\mathbb U})'|$ is the Poisson integral of its non-tangential limit. Thus, it remains to show the Jordan curve $\Gamma$ is locally rectifiable. We point out that in the case that $\Gamma$ is the image of the unit circle $\mathbb S$ with $\infty \notin \Gamma$, this is simple. Indeed, $\log (H|_{\mathbb D})' \in {\rm BMOA}(\mathbb D)$ implies 
that $(H|_{\mathbb D})'$ is an outer function. Then, $(H|_{\mathbb D})' \in H^1(\mathbb D)$ if and only if $|h' | \in L^1(\mathbb S)$
(see \cite[p.64]{Ga}). Since $|h'| \in L^{1+\varepsilon}(\mathbb S)$ for some $\varepsilon >0$
by a property of $A_\infty$-weight, we see that
$(H|_{\mathbb D})'$ belongs to the Hardy space $H^1(\mathbb D)$, and then $\Gamma$ is rectifiable
(see \cite[Theorem 6.8]{Pom}). However, in our case, we need more arguments which are given in the following. 

We define a strongly quasisymmetric homeomorphism $\eta(x)=\int_0^x |h'(t)|dt$. 
Let $T:\overline{\mathbb U} \to \overline{\mathbb D}$ be the Cayley transformation defined by
$T(z)=(z-i)/(z+i)$.
Then, the conjugate $\tilde \eta=T \circ \eta \circ T^{-1}$ is a strongly quasisymmetric homeomorphism of $\mathbb S$ 
fixing $1$ and $-1$, and $|\tilde \eta'|$ is an $A_\infty$-weight on $\mathbb S$.
By computation, we have
$$
|\tilde \eta'|=\frac{|T'| \circ \eta \circ T^{-1}}{|T'| \circ T^{-1}} \cdot \eta' \circ T^{-1}
= \frac{|T'| \circ (T^{-1} \circ \tilde \eta)}{|T'| \circ T^{-1}} \cdot |h'| \circ T^{-1}.
$$
Hence, 
\begin{equation}\label{Holder}
\begin{split}
|h'| \circ T^{-1}(\xi) &=|\tilde \eta'(\xi)|\cdot \frac{|T'| \circ T^{-1}(\xi)}{|T'| \circ (T^{-1} \circ \tilde \eta(\xi))}\\
& =|\tilde \eta'(\xi)| \cdot \frac{|(T^{-1})'| \circ \tilde \eta(\xi)}{|(T^{-1})'|(\xi)}\\
&=|\tilde \eta'(\xi)| \cdot \frac{|\xi - 1|^2}{|\tilde \eta(\xi) - \tilde\eta(1)|^2}
\lesssim |\tilde \eta'(\xi)||\xi-1|^{2(1-\frac{1}{\alpha})}.
\end{split}
\end{equation}
Here, in the last step, we used the H\"older continuity of 
the quasisymmetric homeomorphism $\tilde \eta^{-1}$ for some $\alpha \in (0,1)$ (see \cite[Theorem III.2]{Ah}).  
Set $m = 2(\frac{1}{\alpha} - 1)>0$. 

By the conformal invariance of $\rm BMOA$,
$\log (H|_{\mathbb U})'\circ T^{-1}$ belongs to ${\rm BMOA}(\mathbb D)$. 
It is well-known that $\log(w-1)$ belongs to ${\rm BMOA}(\mathbb D)$ (see \cite{Dan}).
Hence, these can be represented by the Poisson integral. 
Then, we have
\begin{equation*}
\begin{split}
|H'\circ T^{-1}(w)(w-1)^m| &=|\exp[\,\log H'\circ T^{-1}(w)+m\log(w-1)]|\\
&=\left| \exp \left[\frac{1}{2\pi} \int_{\mathbb S}(\log h' \circ T^{-1}(\xi)+m\log(\xi-1))\,{\rm Re}\,\left(\frac{\xi+w}{\xi-w}\right)|d\xi|\right]\right|\\
&=\exp \left[\frac{1}{2\pi} \int_{\mathbb S}\log (|h'| \circ T^{-1}(\xi)|\xi-1|^m)\,{\rm Re}\,\left(\frac{\xi+w}{\xi-w}\right)|d\xi|\right]\\
&\leq\frac{1}{2\pi}\int_{\mathbb S} |h'| \circ T^{-1}(\xi)|\xi-1|^m \,{\rm Re}\,\left(\frac{\xi+w}{\xi-w}\right)|d\xi|,
\end{split}
\end{equation*}
where the last inequality is due to the Jensen inequality.
Then, by the estimate in \eqref{Holder} and the fact 
that the $A_\infty$-weight $|\tilde \eta'|$ is in $L^{1}(\mathbb S)$, 
the last Poisson integral above is convergent. From this, we see that 
$H' \circ T^{-1}(w)(w-1)^m$ belongs to $H^1(\mathbb D)$.
 
Let $I$ be a closed interval in $\mathbb S\setminus \{1\}$ and let $0<\theta_0 < \theta_1 < \cdots <\theta_n<2\pi$
be any finite sequence such that $e^{i\theta_{\nu}} \in I$ for $\nu=0,1,\ldots,n$ with
$e^{i\theta_0}$ and $e^{i\theta_n}$ being the endpoints of $I$. For $M_I=\max_{\xi \in I}2|\xi-1 |^{-2-m}$,
we have 
\begin{equation*}
\begin{split}
\sum_{\nu = 1}^{n} |h\circ T^{-1}(e^{i\theta_{\nu}}) - h\circ T^{-1}(e^{i\theta_{\nu - 1}})| 
& = \lim_{r \to 1^-}\sum_{\nu = 1}^{n} |H \circ T^{-1}(re^{i\theta_{\nu}}) - H\circ T^{-1}(re^{i\theta_{\nu - 1}})|\\
& \leq \lim_{r \to 1^-} \int_{I}|(H\circ T^{-1})'(r\xi)| |d\xi|\\
&\leq M_I \lim_{r \to 1^-} \int_{I} |H'\circ T^{-1}(r\xi)(r\xi-1)^m| |d\xi|\\
& = M_I \int_{I} |h'\circ T^{-1}(\xi)(1 - \xi)^m| |d\xi|.
\end{split}
\end{equation*}
This integral is bounded by the norm of $H' \circ T^{-1}(w)(w-1)^m \in H^1(\mathbb D)$.
Then by the definition of arc length, we see that
$h \circ T^{-1}(\mathbb S \setminus \{1\})=h(\mathbb R)=\Gamma$ is locally rectifiable. More strongly,
it is easy to see that
this estimate implies that $h\circ T^{-1}$ is locally absolutely continuous on $\mathbb S \setminus \{1\}$.
See \cite[Theorem 10.11]{Pom75}.
Then, $h$ is locally absolutely continuous on $\mathbb R$, and since 
$\gamma=h \circ f$ and $f$ is locally absolutely continuous, so is $\gamma$.
\end{proof}

\begin{corollary}
A Jordan curve $\Gamma$ passing through $\infty$ is a chord-arc curve if and only if $\Gamma$ 
is the image of some BMO embedding $\gamma:\mathbb R \to \mathbb C$ such that
$|\gamma'|$ is an $A_{\infty}$-weight.
\end{corollary}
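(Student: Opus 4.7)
The plan is to observe that this corollary is essentially a repackaging of the two immediately preceding results, Proposition \ref{image} and Theorem \ref{Ainfty}, and the proof reduces to combining them.

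For the necessity direction, I would start from the hypothesis that $\Gamma$ is a chord-arc curve passing through $\infty$. By Proposition \ref{image}, there exists a BMO embedding $\gamma:\mathbb R \to \mathbb C$ with $\gamma(\mathbb R)=\Gamma$. Once such a BMO embedding is in hand, Theorem \ref{Ainfty} applies and tells us that $|\gamma'|$ is an $A_\infty$-weight on $\mathbb R$ (since the image is chord-arc). This gives the ``only if'' direction with no further work.

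For the sufficiency direction, I would simply quote Theorem \ref{Ainfty} in its other direction: if $\gamma:\mathbb R \to \mathbb C$ is a BMO embedding with $|\gamma'|$ an $A_\infty$-weight, then $\Gamma=\gamma(\mathbb R)$ is a chord-arc curve. So the ``if'' direction is literally a restatement of half of Theorem \ref{Ainfty}.

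There is no real obstacle here; the substantive work was already done in Theorem \ref{Ainfty}, whose proof invoked Lemma \ref{JK4.2} of Jerison--Kenig and the Hardy space argument involving the Cayley conjugate $\tilde\eta$ together with the BMOA representation of $\log(H|_{\mathbb U})'$. The only minor point to note is that the statement of Proposition \ref{image} requires $\Gamma$ to pass through $\infty$, which is ensured by the hypothesis on $\Gamma$, so both quoted results apply without modification. The corollary therefore follows formally from \textbf{Proposition \ref{image}} $+$ \textbf{Theorem \ref{Ainfty}} and can be written in just a few lines.
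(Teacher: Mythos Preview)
Your proposal is correct and matches the paper's intent: the corollary is stated in the paper without a separate proof, precisely because it follows immediately by combining Proposition \ref{image} (for the ``only if'' direction) with Theorem \ref{Ainfty} (for both directions), exactly as you outline.
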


\begin{remark}
It was shown in \cite[p.877]{Mac} that being a chord-arc curve is a M\"obius invariant. In particular,
$\Gamma$ is a chord-arc curve if and only if $T(\Gamma)$ is a bounded chord-arc curve.
Then, by Theorem \ref{Ainfty}, we see that $|\gamma'|$ is an $A_\infty$-weight on $\mathbb R$
if and only if $|(T \circ \gamma \circ T^{-1})'|$ is an $A_\infty$-weight on $\mathbb S$.
\end{remark}

Furthermore,
if we assign a proper parametrization to a chord-arc curve,
then this mapping itself is realized as a BMO embedding. This result,
which is stated precisely below, was proved in \cite[Theorem 0.2]{Se} where such a mapping was called
a {\it strongly quasisymmetric embedding}.
This can be derived also from Theorem \ref{Ainfty} 
if we assume Proposition \ref{image}, which is a consequence from several crucial results as we have seen.

\begin{theorem}\label{0.2}
Suppose $\gamma$ maps $\mathbb R$ homeomorphically onto a chord-arc curve $\Gamma$. If 
$\gamma$ is locally absolutely continuous and $|\gamma'|$ is an $A_{\infty}$-weight on $\mathbb R$, then
$\gamma$ is a BMO embedding. 
\end{theorem}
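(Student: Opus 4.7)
The plan is to exploit Proposition \ref{image} (any chord-arc curve is the image of \emph{some} BMO embedding) together with Theorem \ref{Ainfty} ($|\gamma_0'|$ is an $A_\infty$-weight for such an embedding), and then reduce the problem to the closure property noted just before Proposition \ref{curve}: the composition of a BMO embedding with a strongly quasisymmetric homeomorphism of $\mathbb R$ is again a BMO embedding.

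First I would invoke Proposition \ref{image} to pick an auxiliary BMO embedding $\gamma_0:\mathbb R\to\mathbb C$ with $\gamma_0(\mathbb R)=\Gamma$. By Theorem \ref{Ainfty}, $\gamma_0$ is locally absolutely continuous and $|\gamma_0'|$ is an $A_\infty$-weight on $\mathbb R$. Set $f=\gamma_0^{-1}\circ\gamma$, so that $\gamma=\gamma_0\circ f$; after replacing $\gamma_0(t)$ by $\gamma_0(-t)$ if necessary (which preserves both the BMO embedding property and the $A_\infty$-property of the derivative), I may assume $f$ is increasing.

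Next I would compare arc-length parametrizations. Set $\eta(x)=\int_0^x |\gamma'(t)|\,dt$ and $\eta_0(x)=\int_0^x |\gamma_0'(t)|\,dt$. Since $\gamma(x)=\gamma_0(f(x))$ traces the same arc on $\Gamma$ as $\gamma_0$ does on $[f(0),f(x)]$, the arc-length identity forces
$$\eta_0(f(x))-\eta_0(f(0))=\eta(x),$$
so $f=\eta_0^{-1}\circ(\eta+c)$ with $c=\eta_0(f(0))$. Both $\eta$ and $\eta_0$ are locally absolutely continuous increasing homeomorphisms of $\mathbb R$ whose derivatives are $A_\infty$-weights; after the standard affine normalization they lie in ${\rm SQS}$. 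Because ${\rm SQS}$ is a group under composition and the intrinsic definition of strong quasisymmetry---local absolute continuity plus $A_\infty$-weight derivative---is invariant under affine pre/post-composition, it follows that $f$ is strongly quasisymmetric.

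Finally, with $f\in{\rm SQS}$ and $\gamma_0$ a BMO embedding, the remark immediately following Lemma \ref{composition} gives at once that $\gamma=\gamma_0\circ f$ is a BMO embedding. The main obstacle is really only the arc-length matching step, which depends on having local absolute continuity for both $\gamma$ and $\gamma_0$ so that the change-of-variable is honest; everything else amounts to bookkeeping with the group structure of ${\rm SQS}$ and the affine-invariance of its defining conditions.
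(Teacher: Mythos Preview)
Your proof is correct and follows essentially the same route as the paper's own argument: pick an auxiliary BMO embedding onto $\Gamma$ (Proposition~\ref{image}), use Theorem~\ref{Ainfty} to get absolute continuity and the $A_\infty$-property for its derivative, write $\gamma$ as that embedding composed with a self-map $f$ of $\mathbb R$, and show $f\in{\rm SQS}$ via the arc-length primitives so that the closure remark after Lemma~\ref{composition} applies. The only cosmetic difference is that the paper first argues $f$ is locally absolutely continuous and then differentiates to obtain $\tilde h\circ f=\tilde\gamma$, whereas you invoke the arc-length identity directly; both are valid routes to the same equation.
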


\begin{proof}
By Proposition \ref{image}, there is a BMO embedding $h:\mathbb R \to \Gamma$,
and by Theorem \ref{Ainfty},
$h$ is absolutely continuous and $|h'|$ is an $A_\infty$-weight. We set $f=h^{-1} \circ \gamma$,
which is an increasing homeomorphism of $\mathbb R$ onto itself. 
From $h \circ f=\gamma$, we see that $f$ maps a set of null measure to a set of null measure
because $\gamma$ is locally absolutely continuous and $|h'(x)| > 0$ almost everywhere on $\mathbb R$.
Hence, $f$ is locally absolutely continuous.
Taking the derivative, we have $|h'| \circ f \cdot f'=|\gamma'|$.
Then, for strongly quasisymmetric homeomorphisms
$$
\tilde h(x)=\int_0^x |h'(t)|dt \quad{\rm and} \quad \tilde \gamma(x)=\int_0^x |\gamma'(t)|dt
$$
of $\mathbb R$, 
we have $\tilde h \circ f=\tilde \gamma$. This implies that $f$ is also a strongly quasisymmetric homeomorphism of $\mathbb R$.

By Theorem \ref{FKP}, we can choose a quasiconformal extension $F:\mathbb C \to \mathbb C$ of $f$ such that 
its complex dilatation $\mu$ satisfies 
$\mu|_{\mathbb U} \in \mathcal M(\mathbb U)$, $\mu|_{\mathbb L} \in \mathcal M(\mathbb L)$ and that
$F|_{\mathbb U}$ and $F|_{\mathbb L}$ are bi-Lipschitz homeomorphisms in the hyperbolic metric on $\mathbb U$ and $\mathbb L$
respectively. We also choose a BMO quasiconformal homeomorphism $H$ of $\mathbb C$ with $H|_{\mathbb R}=h$.
Then by Lemma \ref{composition}, $H \circ F$ is a BMO quasiconformal homeomorphism whose restriction to $\mathbb R$ is $h \circ f=\gamma$.
Hence, $\gamma$ is a BMO embedding.
\end{proof}

Recall that ${\rm BMO}_{\mathbb R}^*(\mathbb R)$ denotes the convex open subset 
of the real subspace ${\rm BMO}_{\mathbb R}(\mathbb R)$ consisting of real-valued BMO functions $u$
with $e^u$ being an $A_\infty$-weight. 
The above argument implies that
$\log |\gamma'|={\rm Re}\, (\log \gamma')$ belongs to ${\rm BMO}_{\mathbb R}^*(\mathbb R)$ for any BMO embedding
$\gamma:\mathbb R \to \mathbb C$ whose image is
a chord-arc curve. Conversely, if a BMO embedding $\gamma$ in particular satisfies $|\gamma'|=1$, which means that
$\gamma$ is parametrized by its arc-length, then $\gamma(\mathbb R)$ is a chord-arc curve.

\section{The Bers coordinates using Teichm\"uller spaces}

We provide canonical coordinates for the space of BMO embeddings and regard this space
as a complex Banach manifold. This is done by a standard method due to Bers in Teich\-m\"uller theory
for investigating quasiconformal deformation spaces such as quasifuchsian spaces.
We point out that this coordinate system has never been used until we recently introduced, though it is very natural and
is more powerful for the study of curves than expected. To make those arguments applicable, we deal with 
the family of all BMO embeddings, which are not just curves but also with their parametrizations.

We impose the normalization $\gamma(0)=0$, $\gamma(1)=1$, and $\gamma(\infty)=\infty$ on every
BMO embedding $\gamma$. Let ${\rm BE}$ be the set of all normalized BMO embeddings.
For $\mu_1 \in \mathcal M(\mathbb U)$ and $\mu_2 \in \mathcal M(\mathbb L)$,
we denote by $G(\mu_1,\mu_2)$ the normalized BMO quasiconformal homeomorphism $G$ of $\mathbb C$ 
($G(0)=0$, $G(1)=1$, and $G(\infty)=\infty$) whose complex dilatation $\mu$
satisfies $\mu|_{\mathbb U}=\mu_1$ and $\mu|_{\mathbb L}=\mu_2$.
Then, we define a map
$$
\widetilde \iota:\mathcal M(\mathbb U) \times \mathcal M(\mathbb L) \to {\rm BE}
$$
by $\widetilde \iota(\mu_1,\mu_2)=G(\mu_1,\mu_2)|_{\mathbb R}$.

The BMO Teichm\"uller space $T_b(\mathbb U)$ on the upper half-plane $\mathbb U$ is
defined to be
the set of all Teichm\"uller equivalence classes $[\mu]$ for $\mu \in \mathcal M(\mathbb U)$.
The Teichm\"uller projection $\pi:\mathcal M(\mathbb U) \to T_b(\mathbb U)$ is defined by the
quotient map $\mu \mapsto [\mu]$.
The BMO Teichm\"uller space $T_b(\mathbb L)$ on the lower half-plane $\mathbb L$ 
and related concepts are
defined similarly. Then, by the argument of simultaneous uniformization due to Bers,
we see the following fact.

\begin{proposition}\label{id}
The space ${\rm BE}$ of normalized BMO embeddings is identified with
$T_b(\mathbb U) \times T_b(\mathbb L)$. More precisely, $\widetilde \iota$ splits into a well-defined bijection
$$
\iota:T_b(\mathbb U) \times T_b(\mathbb L) \to {\rm BE}
$$
by the product of the Teichm\"uller projections
$$
\widetilde \pi:\mathcal M(\mathbb U) \times \mathcal M(\mathbb L) \to T_b(\mathbb U) \times T_b(\mathbb L),
$$
such that $\widetilde \iota=\iota \circ \widetilde \pi$.
\end{proposition}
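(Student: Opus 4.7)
The plan is to verify three facts in turn: surjectivity of $\widetilde\iota$, descent through $\widetilde\pi$ to a well-defined $\iota$, and injectivity of $\iota$.

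Surjectivity is essentially tautological. Given $\gamma\in{\rm BE}$, the definition of a BMO embedding furnishes some BMO quasiconformal extension $\widetilde G\colon\mathbb C\to\mathbb C$ of $\gamma$. A quasiconformal self-homeomorphism of $\mathbb C$ is determined by its Beltrami coefficient up to post-composition by a M\"obius transformation; since $\gamma$ fixes $0,1,\infty$, we may arrange $\widetilde G$ to be normalized, giving $\widetilde G=G(\mu_{\widetilde G}|_{\mathbb U},\mu_{\widetilde G}|_{\mathbb L})$ and hence $\widetilde\iota(\mu_{\widetilde G}|_{\mathbb U},\mu_{\widetilde G}|_{\mathbb L})=\gamma$.

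The core of the argument is descent. Assume $[\mu_1]=[\nu_1]$ in $T_b(\mathbb U)$ and $[\mu_2]=[\nu_2]$ in $T_b(\mathbb L)$, so that $F^{\mu_j}|_{\mathbb R}=F^{\nu_j}|_{\mathbb R}$ for $j=1,2$. I would set $\phi_1:=(F^{\mu_1})^{-1}\circ F^{\nu_1}\colon\mathbb U\to\mathbb U$ and $\phi_2:=(F^{\mu_2})^{-1}\circ F^{\nu_2}\colon\mathbb L\to\mathbb L$, each a normalized quasiconformal self-map whose boundary values on $\mathbb R$ are the identity. Gluing yields a normalized quasiconformal self-homeomorphism $\Psi$ of $\mathbb C$ with $\Psi|_{\mathbb R}=\mathrm{id}$. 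On $\mathbb U$, factor $G(\mu_1,\mu_2)\circ\Psi = \bigl(G(\mu_1,\mu_2)\circ(F^{\mu_1})^{-1}\bigr)\circ F^{\nu_1}$; the chain rule for Beltrami coefficients shows that $G(\mu_1,\mu_2)\circ(F^{\mu_1})^{-1}$ has vanishing dilatation on $\mathbb U$ (the two copies of $\mu_1$ cancel), so the full composition has Beltrami coefficient $\nu_1$ there. The symmetric computation on $\mathbb L$ gives $\nu_2$. Uniqueness of the normalized quasiconformal homeomorphism of $\mathbb C$ with prescribed Beltrami coefficient then forces $G(\mu_1,\mu_2)\circ\Psi=G(\nu_1,\nu_2)$, and restricting to $\mathbb R$ yields $\widetilde\iota(\mu_1,\mu_2)=\widetilde\iota(\nu_1,\nu_2)$.

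For injectivity, suppose $\iota([\mu_1],[\mu_2])=\iota([\nu_1],[\nu_2])=:\gamma$. Both $G(\mu_1,\mu_2)$ and $G(\nu_1,\nu_2)$, as orientation-preserving quasiconformal homeomorphisms of $\mathbb C$ with common boundary $\gamma$, send $\mathbb U$ onto the same ``left'' complementary domain $D_1$ of $\gamma(\mathbb R)$. The same cancellation of dilatations used above makes $G(\mu_1,\mu_2)\circ(F^{\mu_1})^{-1}$ and $G(\nu_1,\nu_2)\circ(F^{\nu_1})^{-1}$ normalized conformal maps $\mathbb U\to D_1$; uniqueness of the Riemann map with normalization fixing $0,1,\infty$ forces them to coincide. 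Restricting to $\mathbb R$ and using $G(\mu_1,\mu_2)|_{\mathbb R}=G(\nu_1,\nu_2)|_{\mathbb R}$ gives $F^{\mu_1}|_{\mathbb R}=F^{\nu_1}|_{\mathbb R}$, whence $[\mu_1]=[\nu_1]$; the analogous argument with $\mathbb L$ in place of $\mathbb U$ yields $[\mu_2]=[\nu_2]$.

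The main obstacle is the descent step. The issues to check are that $\Psi$ glued from $\phi_1,\phi_2$ really defines a quasiconformal homeomorphism of $\mathbb C$ (immediate, since both pieces are qc and match as the identity on the common boundary $\mathbb R$) and that the chain-rule computation on each half-plane really produces $(\nu_1,\nu_2)$. The latter hinges only on the observation that cancellation of dilatations makes $G(\mu_1,\mu_2)\circ(F^{\mu_j})^{-1}$ conformal on its respective half-plane, and post-composition with a conformal map leaves the Beltrami coefficient unchanged; no bi-Lipschitz control on the intermediate maps is needed, since $\Psi$ itself need not lie in $\mathcal M$ for the uniqueness argument to apply.
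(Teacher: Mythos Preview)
Your argument is correct and is precisely the Bers simultaneous uniformization argument that the paper invokes without writing out; the paper gives no proof of this proposition beyond the sentence ``by the argument of simultaneous uniformization due to Bers, we see the following fact,'' and what you have written is the standard way to unpack that reference. The only point worth tightening is in the injectivity step: when you appeal to ``uniqueness of the Riemann map with normalization fixing $0,1,\infty$,'' you are implicitly using that both conformal maps $\mathbb U\to D_1$ extend continuously to the boundary (so that the three-point normalization makes sense), which is clear here since each is the restriction of a global quasiconformal homeomorphism, but you might state it explicitly.
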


We call the representation of $\rm BE$ by $T_b(\mathbb U) \times T_b(\mathbb L)$ the {\it Bers coordinates} 
of ${\rm BE}$. Moreover, we provide complex Banach manifold structures for $T_b(\mathbb U)$ and $T_b(\mathbb L)$ by
using the pre-Schwarzian derivative models as in Theorem \ref{model}. Namely, $T_b(\mathbb U)$ is identified with the domain
$\mathcal T_b(\mathbb L)$ 
of ${\rm BMOA}(\mathbb L)$, and $T_b(\mathbb L)$ is identified with the domain $\mathcal T_b(\mathbb U)$
of ${\rm BMOA}(\mathbb U)$:
\begin{align*}
T_b(\mathbb U) &\cong \mathcal T_b(\mathbb L)=\{ {\mathcal L}_{G(\mu,0)} \in {\rm BMOA}(\mathbb L) \mid \mu \in \mathcal M(\mathbb U)\};\\
T_b(\mathbb L) &\cong \mathcal T_b(\mathbb U)=\{ {\mathcal L}_{G(0,\mu)} \in {\rm BMOA}(\mathbb U) \mid \mu \in \mathcal M(\mathbb L)\}.
\end{align*}
Then, by the identification ${\rm BE} \cong T_b(\mathbb U) \times T_b(\mathbb L)$ in
Proposition \ref{id}, we may also regard ${\rm BE}$ as a domain 
of ${\rm BMOA}(\mathbb L) \times {\rm BMOA}(\mathbb U)$.

Here, we review the canonical biholomorphic automorphisms of the BMO Teichm\"uller space
$T_b(\mathbb U)$. Let $F^\nu$ be the normalized quasiconformal homeomorphism of $\mathbb U$ onto itself
whose complex dilatation is $\nu \in \mathcal M(\mathbb U)$. By Theorem \ref{FKP},
we can make $F^\nu$ bi-Lipschitz in the hyperbolic metric 
by choosing some $\nu$ in any Teichm\"uller class $[\nu] \in T_b(\mathbb U)$. 
We define the right translation $r_\nu$ of $\mathcal M(\mathbb U)$ by 
$r_\nu(\mu)=\mu \ast \nu$ for every $\mu \in \mathcal M(\mathbb U)$, where
$\mu \ast \nu$ denotes the complex dilatation of $F^{\mu} \circ F^{\nu}$.
By Lemma \ref{composition}, $r_\nu(\mu) \in \mathcal M(\mathbb U)$, and since the inverse of
$r_\nu$ is $r_{\nu^{-1}}$ where $\nu^{-1}$ denotes the complex dilatation of $(F^\nu)^{-1}$,
we see that $r_\nu:\mathcal M(\mathbb U) \to \mathcal M(\mathbb U)$ is a bijection.
Then, the projection of $r_{\nu}$ to $T_b(\mathbb U)$ by
$\pi:\mathcal M(\mathbb U) \to T_b(\mathbb U)$
is well-defined as $\pi \circ r_{\nu} \circ \pi^{-1}$, which yields a
bijection $R_{[\nu]}:T_b(\mathbb U) \to T_b(\mathbb U)$ for any $[\nu] \in T_b(\mathbb U)$.
This is the right translation of $T_b(\mathbb U)$.

\begin{lemma}\label{auto}
If $F^\nu$ is a bi-Lipschitz diffeomorphism with $\nu \in \mathcal M(\mathbb U)$, then
$r_\nu:\mathcal M(\mathbb U) \to \mathcal M(\mathbb U)$ is a biholomorphic automorphism of $\mathcal M(\mathbb U)$.
For every $[\nu] \in T_b(\mathbb U)$, $R_{[\nu]}:T_b(\mathbb U) \to T_b(\mathbb U)$
is a biholomorphic automorphism of $T_b(\mathbb U)$.
\end{lemma}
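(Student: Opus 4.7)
The plan is to proceed from the explicit formula for the composition of complex dilatations. If $\mu \in \mathcal M(\mathbb U)$ and $\nu \in \mathcal M(\mathbb U)$ are given, then the complex dilatation of $F^\mu \circ F^\nu$ is
$$
r_\nu(\mu) = \frac{\nu + (\mu \circ F^\nu)\,\tau}{1+\bar\nu\,(\mu \circ F^\nu)\,\tau}, \qquad \tau(z)=\overline{\partial F^\nu(z)}/\partial F^\nu(z),
$$
where $|\tau|=1$ almost everywhere. The first step is to observe that $r_\nu$ does map $\mathcal M(\mathbb U)$ into itself: this is precisely the second assertion of Lemma \ref{composition}, whose hypotheses are satisfied because $F^\nu$ is assumed bi-Lipschitz in the hyperbolic metric. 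Its inverse is $r_{\nu^{-1}}$ for $\nu^{-1}$ the complex dilatation of $(F^\nu)^{-1}$; by the first assertion of Lemma \ref{composition} we have $\nu^{-1} \in \mathcal M(\mathbb U)$, and $(F^\nu)^{-1}$ is also bi-Lipschitz in the hyperbolic metric, so $r_{\nu^{-1}}$ is defined on all of $\mathcal M(\mathbb U)$. Thus $r_\nu$ is a bijection of $\mathcal M(\mathbb U)$.

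Next, I would verify holomorphy of $r_\nu$ by factoring it through the pullback. The assignment $C_\nu: \mu \mapsto (\mu\circ F^\nu)\,\tau$ is complex-linear and takes $\mathcal L(\mathbb U)$ to itself boundedly; the $L^\infty$ bound is immediate since $|\tau|=1$ and the composition with the homeomorphism $F^\nu$ preserves $L^\infty$ norm, while the Carleson bound is supplied again by Lemma \ref{composition} applied to $|\mu\circ F^\nu|^2\,dxdy/y$ via the bi-Lipschitz property of $F^\nu$. For each fixed $\nu$, the outer map $w \mapsto (\nu+w)/(1+\bar\nu w)$ is a norm-bounded holomorphic fractional linear transformation on the open subset of $\mathcal L(\mathbb U)$ where the denominator is bounded away from zero, which includes $C_\nu(\mathcal M(\mathbb U))$. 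Composing a bounded complex-linear map with a holomorphic fractional linear one yields that $r_\nu:\mathcal M(\mathbb U)\to\mathcal M(\mathbb U)$ is holomorphic; the same reasoning applied to $\nu^{-1}$ shows $r_{\nu^{-1}}$ is holomorphic, establishing the first assertion.

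For the second assertion, I would first check that $R_{[\nu]}=\pi\circ r_\nu\circ\pi^{-1}$ descends well to $T_b(\mathbb U)$. If $\mu_1$ and $\mu_2$ lie over the same class $[\mu]$, then $F^{\mu_1}=F^{\mu_2}$ on $\mathbb R$, hence $F^{\mu_1}\circ F^\nu = F^{\mu_2}\circ F^\nu$ on $\mathbb R$, giving $\pi(r_\nu(\mu_1))=\pi(r_\nu(\mu_2))$; and independence of the chosen bi-Lipschitz representative $\nu \in [\nu]$ is the same computation on the other side. To see $R_{[\nu]}$ is biholomorphic, I would combine the holomorphy of $\pi$ and $r_\nu$ with the existence of local holomorphic sections $s$ of $\pi$ near any point of $T_b(\mathbb U)$ provided by Theorem \ref{model}: locally $R_{[\nu]}=\pi\circ r_\nu\circ s$ is a composition of holomorphic maps between complex Banach manifolds, and its inverse $R_{[\nu]^{-1}}$ is holomorphic by the same argument (using a bi-Lipschitz representative of $[\nu]^{-1}$ guaranteed by Theorem \ref{FKP}).

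The main technical point I anticipate is the verification that $C_\nu$ is bounded in the Carleson-norm part of the $\mathcal L(\mathbb U)$ norm; this is really where the hyperbolic bi-Lipschitz hypothesis on $F^\nu$ is used, and it is exactly the content packaged into Lemma \ref{composition}. Once that is in hand, the rest is a formal assembly: openness and bijectivity are clear from the group-theoretic structure $r_{\nu^{-1}}\circ r_\nu=\mathrm{id}$, and holomorphy on both $\mathcal M(\mathbb U)$ and $T_b(\mathbb U)$ follows from the explicit Möbius-type formula together with local holomorphic sections of the Teichmüller projection.
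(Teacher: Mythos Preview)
Your proposal is correct and lands in essentially the same place as the paper, though the route differs slightly. The paper's own proof is a brief sketch: it invokes the general criterion that a locally bounded map which is G\^ateaux holomorphic (verified directly from the explicit complex-dilatation formula you wrote down) is holomorphic, referring to \cite[Remark 5.1]{SWei} and \cite{Mat} for the details. You instead factor $r_\nu$ explicitly as the bounded complex-linear pullback $C_\nu$ followed by the pointwise M\"obius map $w\mapsto(\nu+w)/(1+\bar\nu w)$, which you can expand as a convergent power series in $\mathcal L(\mathbb U)$ once you observe that pointwise multiplication is bounded there (because $\|\mu_1\mu_2\|_c\le\|\mu_1\|_\infty\|\mu_2\|_c$). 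This is a bit more concrete and sidesteps the infinite-dimensional holomorphy criterion, at the price of checking the Banach-algebra property of $\mathcal L(\mathbb U)$. Both arguments need the quantitative form of Lemma~\ref{composition} (that $\|\mu\circ F^\nu\|_c$ is controlled by $\|\mu\|_c$ with a constant depending only on the bi-Lipschitz data of $F^\nu$), which is implicit in its proof though not stated as such. For $R_{[\nu]}$ your treatment coincides with the paper's: compose $r_\nu$ with $\pi$ and a local holomorphic section of $\pi$ furnished by Theorem~\ref{model}.
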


\begin{proof}
The argument for showing these statements
is outlined in \cite[Remark 5.1]{SWei}. Here, we add supplemental remarks to this.
To prove that $r_\nu$ is holomorphic, it is enough to show that $r_\nu$ is continuous or locally bounded,
and then verify certain weak holomorphy of $r_\nu$. To prove that $R_{[\nu]}$ is holomorphic,
we consider the Teichm\"uller projection $\pi:\mathcal M(\mathbb U) \to T_b(\mathbb U)$,
which is holomorphic by \cite[Theorem 5.1]{SWei}. A local inverse of $\pi$ is also given there.
Again, we can show that this is continuous or locally bounded first,
and then verify its weak holomorphy. These arguments have been done for a different Teichm\"uller space in \cite{Mat}.
See \cite{WM-6}.
\end{proof}

By Proposition \ref{curve}, we can consider a map $L:{\rm BE} \to {\rm BMO}(\mathbb R)$ defined 
by $L(\gamma)=\log \gamma'$. 
Then, with respect to the complex structure of ${\rm BE}$ given as above,
we see the following:

\begin{theorem}\label{holo}
The map $L:{\rm BE} \to {\rm BMO}(\mathbb R)$ is holomorphic.
\end{theorem}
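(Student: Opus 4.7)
The plan is to lift through the Teichm\"uller projection and verify holomorphy at the level of Beltrami coefficients. Since $\widetilde\pi = \pi\times\pi:\mathcal M(\mathbb U)\times\mathcal M(\mathbb L)\to T_b(\mathbb U)\times T_b(\mathbb L)$ is holomorphic and admits local holomorphic sections by Theorem \ref{model}, it suffices to prove that the map
\[
\Phi := L\circ\widetilde\iota:\mathcal M(\mathbb U)\times\mathcal M(\mathbb L)\longrightarrow {\rm BMO}(\mathbb R),
\]
sending $(\mu_1,\mu_2)$ to $\log\gamma'$ for $\gamma=G(\mu_1,\mu_2)|_{\mathbb R}$, is holomorphic. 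For a map between open subsets of complex Banach spaces it is enough to show local boundedness together with Gateaux holomorphy along every complex line.

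For local boundedness I would use the factorization $G(\mu_1,\mu_2)=H\circ F$ from the proof of Proposition \ref{curve}: $F$ is the symmetric bi-Lipschitz extension of $f=F^{\mu_1}|_{\mathbb R}$ supplied by Theorem \ref{FKP}, and $H$ is conformal on $\mathbb U$ with complex dilatation $F_*\mu_2\in\mathcal M(\mathbb L)$ on $\mathbb L$, which is well defined by Lemma \ref{composition}. The resulting identity $\log\gamma'=\log h'\circ f+\log f'$ with $h=H|_{\mathbb R}$ lets one estimate each piece: Theorem \ref{Guo11} bounds $\mathcal L_{H|_{\mathbb U}}$ in ${\rm BMOA}(\mathbb U)$ in terms of the $\mathcal L(\mathbb L)$-norm of $F_*\mu_2$; Theorem \ref{131} transfers this to the boundary value $\log h'\in {\rm BMO}(\mathbb R)$; Theorem \ref{pullback} controls pre-composition by $f$; and Theorem \ref{basic} controls $\log f'$ directly.

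For Gateaux holomorphy, fix $(\mu_1^0,\mu_2^0)$ and a direction $(\nu_1,\nu_2)\in\mathcal L(\mathbb U)\times\mathcal L(\mathbb L)$, and consider the affine family $\mu_i(z)=\mu_i^0+z\nu_i$ on a small complex disk. I would establish weak holomorphy by pairing with the predual: for every $g$ in the Hardy space $H^1(\mathbb R)$ the scalar function $z\mapsto \langle\log\gamma_z',g\rangle$ should be holomorphic. Integrating by parts rewrites this pairing in terms of $\gamma_z$ rather than its derivative, at which point the Ahlfors--Bers theorem supplies pointwise holomorphy of $z\mapsto G^{\mu_1(z),\mu_2(z)}(w)$ for every $w\in\mathbb C$, and Morera's theorem combined with the uniform local bounds from the previous step delivers holomorphy of the scalar pairing.

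The hard part is Gateaux holomorphy. The factorization $G=H\circ F$ is \emph{not} holomorphic in $\mu_1$ because the bi-Lipschitz extension $F$ depends on $\mu_1$ only real-analytically, yet the two summands $\log h'\circ f$ and $\log f'$ must combine into something holomorphic. The decomposition is therefore useful only for norm estimates, and holomorphy itself has to be extracted globally from Ahlfors--Bers. The delicate point in the $H^1$-duality/Morera argument will be justifying the integration by parts under only BMO regularity of $\log\gamma_z'$; I would handle this by approximating $g\in H^1(\mathbb R)$ by smooth atoms and passing to the limit with the aid of the uniform BMO bound from the local boundedness step.
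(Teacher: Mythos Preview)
Your Gateaux step contains a real gap. The pairing you want to analyze is $\int_{\mathbb R}\log\gamma_z'(x)\,g(x)\,dx$, and integration by parts turns this into $-\int(\int^x\log\gamma_z')\,g'$, not into anything involving $\gamma_z$ itself; the logarithm blocks the passage from $\log\gamma_z'$ to $\gamma_z$. Ahlfors--Bers gives pointwise holomorphy of $z\mapsto G^{\mu(z)}(w)$, but to exploit it here you would need pointwise holomorphy of $z\mapsto\gamma_z'(x)$ for a.e.\ $x$, which does not follow from holomorphic dependence in $W^{1,p}_{\rm loc}$ and is not supplied by the integration-by-parts maneuver you sketch. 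So the weak-holomorphy verification, as written, does not close.

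The paper avoids this difficulty by a device you overlooked: it invokes Hartogs' theorem for Banach spaces and checks only \emph{separate} holomorphy on $T_b(\mathbb U)\times T_b(\mathbb L)$. With $[\mu_1]$ fixed, the bi-Lipschitz extension $F$ and its boundary map $f$ are fixed once and for all, so the factorization you yourself wrote down becomes
\[
\log\gamma' \;=\; P_f\circ b\bigl(\mathcal L_{H([\mu])}\bigr) \;+\; \log f',
\]
a fixed bounded linear operator $P_f\circ b$ (Theorems \ref{131} and \ref{pullback}) applied to $\mathcal L_{H([\mu])}\in{\rm BMOA}(\mathbb U)$, plus a constant. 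Holomorphy of $[\mu]\mapsto\mathcal L_{H([\mu])}$ then comes straight from Theorem \ref{model} together with the biholomorphy of the right translation $R_{[\mu_1]}^{-1}$ (Lemma \ref{auto}). No duality, no Morera, no pointwise Ahlfors--Bers is needed. Your diagnosis that the decomposition is ``useful only for norm estimates'' is thus too pessimistic: once one variable is frozen via Hartogs, the decomposition is genuinely holomorphic and carries the whole proof.
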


\begin{proof}
We will prove that $L$ is holomorphic at any point $\gamma=G(\mu_1,\mu_2)|_{\mathbb R}$ in ${\rm BE}$.
Since ${\rm BE}$ can be regarded as a domain of the product 
${\rm BMOA}(\mathbb L) \times {\rm BMOA}(\mathbb U)$ of the Banach spaces, 
the Hartogs theorem for Banach spaces (see \cite[Theorem 14.27]{Ch} and \cite[Theorem 36.8]{Mu}) implies that
we have only to prove that $L$ is separately holomorphic. Thus, 
by fixing $[\mu_1] \in T_b(\mathbb U)$, we will show that 
$\log (G(\mu_1,\mu)|_{\mathbb R})' \in {\rm BMO}(\mathbb R)$ depends holomorphically on $[\mu] \in T_b(\mathbb L)$.
The other case is similarly treated.

By the proof of Proposition \ref{curve}, we have
$$
\log (G(\mu_1,\mu)|_{\mathbb R})' =\log h' \circ f + \log f',
$$
where $f: \mathbb R \to \mathbb R$ is the boundary extension of 
the fixed bi-Lipschitz diffeomorphism $F:\mathbb L \to \mathbb L$
for the Teichm\"uller class $[\mu_1]$,
and $h:\mathbb R \to \mathbb C$ is the restriction of
the quasiconformal homeomorphism $H$ of $\mathbb C$ that is conformal on $\mathbb U$ and
has the complex dilatation $F_* \mu$ on $\mathbb L$. 
Since $H|_{\mathbb U}$ depends on $R_{[\mu_1]}^{-1}([\mu]) \in T_b(\mathbb L)$ and 
$R_{[\mu_1]}^{-1}([\mu])$ is holomorphic on $[\mu]$ by Lemma \ref{auto},
we see that ${\mathcal L}_{{H([\mu])}}=\log (H|_{\mathbb U})'
\in {\rm BMOA}(\mathbb U)$ depends on $[\mu] \in T_b(\mathbb L)$ holomorphically.

By Theorem \ref{131}, we see that the boundary extension $b:{\rm BMOA}(\mathbb U) \to {\rm BMO}(\mathbb R)$ is
a bounded linear operator. Moreover, by Theorem \ref{pullback}, the composition operator
$P_f:{\rm BMO}(\mathbb R) \to {\rm BMO}(\mathbb R)$ induced by $f \in {\rm SQS}$ is also a bounded linear operator.
Therefore,
$$
\log h' \circ f=P_f \circ b ({\mathcal L}_{{H([\mu])}}) \in {\rm BMO}(\mathbb R)
$$
depends on $[\mu] \in T_b(\mathbb L)$ holomorphically, and so does 
$\log (G(\mu_1,\mu)|_{\mathbb R})'$.
\end{proof}

We introduce canonical biholomorphic automorphisms of ${\rm BE}$.
For $\nu \in {\mathcal M}(\mathbb U)$,
the same symbol $\nu$ still denotes the symmetric complex dilatation $\overline{\nu(\bar z)}$ for $z \in \mathbb L$ in 
${\mathcal M}(\mathbb L)$. This also gives the identification of $T_b(\mathbb U)$ and $T_b(\mathbb L)$,
which is often denoted just by $T_b$.
For any $[\nu] \in T_b$, we define the right translation of ${\rm BE}$ by
$$
\widetilde R_{[\nu]}:([\mu_1],[\mu_2]) \mapsto ([\mu_1] \ast [\nu], [\mu_2] \ast [\nu]).
$$
Since $R_{[\nu]}$ is a biholomorphic automorphism of $T_b$ by Lemma \ref{auto},
$\widetilde R_{[\nu]}$ yields a biholomorphic automorphism of ${\rm BE}$.

By the proof of Theorem \ref{holo}, we see that the holomorphic map $L$ can be represented by
the composition of some biholomorphic automorphism $\widetilde R_{[\,\cdot\,]}$ and
the boundary extension of the logarithm of the derivative of some Riemann mapping.
We will find a more explicit relation between $L$ and $\widetilde R_{[\,\cdot\,]}$
in the next section.

Based on the proof of Theorem \ref{holo}, we also define the {\it conformal welding coordinates} of ${\rm BE}$ as follows.
Under the Bers coordinates ${\rm BE} \cong T_b(\mathbb U) \times T_b(\mathbb L)$,
the subspace ${\rm SQS} \subset {\rm BE}$ of all
normalized strongly quasisymmetric homeomorphisms is identified with the diagonal locus
$$
\{([\mu],[\mu]) \in T_b(\mathbb U) \times T_b(\mathbb L) \mid [\mu] \in T_b\},
$$
which is a real-analytic submanifold of ${\rm BE}$.
Let ${\rm RM}$ be the set of all BMO embeddings $\gamma:\mathbb R \to \mathbb C$
of {\it Riemann mapping parametrization}, that is, those
extending conformally to $\mathbb U$.
The subspace ${\rm RM} \subset {\rm BE}$ 
is identified with the second coordinate axis
$$
\{([0],[\mu]) \in T_b(\mathbb U) \times T_b(\mathbb L) \mid [\mu] \in T_b\},
$$
which is a complex-analytic submanifold of ${\rm BE}$. 
We define the projections to these submanifolds
$$
\Pi: {\rm BE} \to {\rm SQS}, \qquad \Phi: {\rm BE} \to {\rm RM}
$$
by $\Pi([\mu_1],[\mu_2])=([\mu_1],[\mu_1])$ and $\Phi([\mu_1],[\mu_2])=([0],[\mu_2] \ast [\mu_1]^{-1})$ 
represented in the Bers coordinates. 
Then, every $\gamma \in {\rm BE}$ 
is decomposed uniquely into $\gamma=\Phi(\gamma) \circ \Pi(\gamma)$.
Clearly, $\Pi$ is real-analytic. 
We will see that $\Phi$ is not continuous later in Proposition \ref{continuity}. 
The biholomorphic automorphism $\widetilde R_{[\nu]}$ of ${\rm BE}$ for $[\nu] \in T_b$
satisfies that $\Phi \circ \widetilde R_{[\nu]}=\Phi$.

The projections $\Pi$ and $\Phi$ define another product structure ${\rm SQS} \times {\rm RM}$ on ${\rm BE}$.
Namely, we have a bijection
$$
(\Pi,\Phi):{\rm BE} \to {\rm SQS} \times {\rm RM}.
$$
However, $(\Pi,\Phi)$ is not a homeomorphism.
Since ${\rm SQS}$ and ${\rm RM}$ are both identified with $T_b$, $(\Pi,\Phi)$ is the
coordinate change of ${\rm BE}$ from the Bers coordinates
to the one we may call the
conformal welding coordinates: 
$$
T_b(\mathbb U) \times T_b(\mathbb L) \to T_b \times T_b: \quad([\mu_1], [\mu_2]) \mapsto ([\mu_1],[\mu_2] \ast [\mu_1]^{-1}).
$$

\section{Parameter change and arc-length parametrization}
To investigate the image $L({\rm BE})$ in ${\rm BMO}(\mathbb R)$,
we prepare canonical biholomorphic auto\-morphisms of ${\rm BMO}(\mathbb R)$ that keep
$L({\rm BE})$ invariant. Under $L$ restricted to the subset of BMO embeddings whose images are chord-arc curves,
these biholomorphic automorphisms correspond to change of parameters for chord-arc curves.

We have assumed that ${\rm BMO}(\mathbb R)$ is the complex Banach space of the equivalence classes of BMO functions modulo 
complex constant functions,
and thus we can regard it as the set of representatives $w$ satisfying the normalization condition
$\int_0^1 e^{w(t)}dt=1$. Let ${\rm BMO}_{\mathbb R}(\mathbb R)$ and
$i{\rm BMO}_{\mathbb R}(\mathbb R)$ denote the real subspaces of 
${\rm BMO}(\mathbb R)$ consisting of all real-valued and purely imaginary-valued functions respectively,
both of which are taken modulo complex constant functions. 
Let $u \in {\rm BMO}_{\mathbb R}^*(\mathbb R)$ and let
$\gamma_{u}:\mathbb R \to \mathbb R$ be the strongly quasisymmetric homeomorphism in ${\rm SQS}$
defined by $\gamma_{u}(x)=\int_0^x e^{u(t)}dt$. Then, the composition operator
$P_{\gamma_u}:{\rm BMO}(\mathbb R) \to {\rm BMO}(\mathbb R)$ is given by $w \mapsto w \circ \gamma_{u}$ 
for $w \in {\rm BMO}(\mathbb R)$,
which is a linear isomorphism of the Banach space ${\rm BMO}(\mathbb R)$ onto itself by Theorem \ref{pullback}. 
Moreover, we define $Q_{u}(w)=P_{\gamma_u}(w)+u$ for $w \in {\rm BMO}(\mathbb R)$, 
which is an affine isomorphism of ${\rm BMO}(\mathbb R)$ onto itself. 
Clearly, $Q_{u}$ preserves the real subspace ${\rm BMO}_{\mathbb R}(\mathbb R)$.

We first show the correspondence of two translations, $\widetilde R_{[\nu]}$ for $\rm BE$ and $Q_u$ for ${\rm BMO}(\mathbb R)$.

\begin{proposition}\label{Lequation}
It holds that 
$$L \circ \widetilde R_{[\nu]}=Q_{L([\nu],[\nu])} \circ L$$
on ${\rm BE}$ for every $[\nu] \in T_b$.
For any $u \in {\rm BMO}_{\mathbb R}^*(\mathbb R)$, $Q_u$ is a biholomorphic automorphism of ${\rm BMO}(\mathbb R)$
that keeps $L({\rm BE})$ invariant.
\end{proposition}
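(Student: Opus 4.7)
The plan is to first recognize the right translation $\widetilde R_{[\nu]}$ concretely as precomposition by the strongly quasisymmetric homeomorphism $f_\nu=\iota([\nu],[\nu])$, and then to read off both identities directly from the chain rule.

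First I would verify that $\widetilde R_{[\nu]}(\gamma)=\gamma\circ f_\nu$ for $\gamma=\iota([\mu_1],[\mu_2])$. Let $\hat\nu$ denote the symmetric extension of $\nu\in\mathcal M(\mathbb U)$ to $\mathbb C$, and $F^{\hat\nu}$ the normalized quasiconformal homeomorphism of $\mathbb C$ with that dilatation, which preserves $\mathbb R$. Writing $G(\mu_1,\mu_2)|_{\mathbb U}$ as $H\circ F^{\mu_1}$ with $H$ conformal on $\mathbb U$, the dilatation of $G(\mu_1,\mu_2)\circ F^{\hat\nu}$ on $\mathbb U$ equals that of $F^{\mu_1}\circ F^{\nu}$, namely $\mu_1\ast\nu$; symmetrically the dilatation on $\mathbb L$ equals $\mu_2\ast\nu$ (with the identification of $\nu$ in $\mathcal M(\mathbb L)$). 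Since both $G(\mu_1\ast\nu,\mu_2\ast\nu)$ and $G(\mu_1,\mu_2)\circ F^{\hat\nu}$ fix $0,1,\infty$ and carry the same complex dilatation a.e., uniqueness in the measurable Riemann mapping theorem gives their equality. Restricting to $\mathbb R$ yields $\widetilde R_{[\nu]}(\gamma)=\gamma\circ f_\nu$, with $f_\nu=F^{\hat\nu}|_{\mathbb R}=\iota([\nu],[\nu])$.

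Next, I would apply the chain rule. Proposition \ref{curve} ensures that $\gamma$ and $f_\nu$ are differentiable a.e.\ with logarithms of derivatives in ${\rm BMO}$, so
\begin{equation*}
L(\gamma\circ f_\nu)=\log\gamma'\circ f_\nu+\log f_\nu'=P_{f_\nu}(L(\gamma))+L([\nu],[\nu]).
\end{equation*}
Setting $u=L([\nu],[\nu])\in{\rm BMO}_{\mathbb R}^{*}(\mathbb R)$, the normalization convention $\int_0^1 e^{u}dt=1$ (since $f_\nu(1)=1$) identifies $\gamma_u$ with $f_\nu$, and consequently $P_{\gamma_u}=P_{f_\nu}$. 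Therefore $L\circ\widetilde R_{[\nu]}=Q_u\circ L$, which is the required identity.

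For the second assertion, $Q_u(w)=P_{\gamma_u}(w)+u$ is the composition of the linear isomorphism $P_{\gamma_u}$ of ${\rm BMO}(\mathbb R)$ (Theorem \ref{pullback}), which is trivially biholomorphic, with the affine translation by $u$; hence $Q_u$ is a biholomorphic automorphism of ${\rm BMO}(\mathbb R)$. For invariance of $L({\rm BE})$, given any $u\in{\rm BMO}_{\mathbb R}^{*}(\mathbb R)$, Proposition \ref{topequiv} furnishes $[\nu]\in T_b$ with $u=L([\nu],[\nu])$; the identity already proved then gives $Q_u(L({\rm BE}))=L(\widetilde R_{[\nu]}({\rm BE}))=L({\rm BE})$ since $\widetilde R_{[\nu]}$ is an automorphism of ${\rm BE}$. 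The only subtle point, and the step I would write most carefully, is the uniqueness argument identifying $G(\mu_1\ast\nu,\mu_2\ast\nu)$ with $G(\mu_1,\mu_2)\circ F^{\hat\nu}$, since everything else is a routine chain-rule and Banach-space calculation.
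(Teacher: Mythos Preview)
Your proof is correct and follows essentially the same approach as the paper: recognize that $\widetilde R_{[\nu]}$ acts on ${\rm BE}$ as precomposition by the strongly quasisymmetric homeomorphism $f_\nu=\iota([\nu],[\nu])$, apply the chain rule to obtain $L(\gamma\circ f_\nu)=P_{f_\nu}(L(\gamma))+L([\nu],[\nu])$, and then use the bijection ${\rm SQS}\leftrightarrow{\rm BMO}_{\mathbb R}^*(\mathbb R)$ together with the automorphism property of $\widetilde R_{[\nu]}$ for invariance of $L({\rm BE})$. The only difference is expository: the paper compresses the first step into the identity $\gamma_{L([\mu_1]\ast[\nu],[\mu_2]\ast[\nu])}=\gamma_{L([\mu_1],[\mu_2])}\circ\gamma_{L([\nu],[\nu])}$, whereas you spell it out via uniqueness in the measurable Riemann mapping theorem.
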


\begin{proof}
For any $\gamma=([\mu_1],[\mu_2]) \in {\rm BE}$ in the Bers coordinates, we have
\begin{equation*}
\begin{split}
L \circ \widetilde R_{[\nu]}([\mu_1],[\mu_2])&=L([\mu_1] \ast [\nu], [\mu_2] \ast [\nu])\\
&=\log \gamma'_{L([\mu_1] \ast [\nu], [\mu_2] \ast [\nu])}=\log (\gamma_{L([\mu_1],[\mu_2])}\circ \gamma_{L([\nu],[\nu])})'\\
&=\log \gamma'_{L([\mu_1],[\mu_2])}\circ \gamma_{L([\nu],[\nu])}+ \log \gamma'_{L([\nu],[\nu])}\\
&=L([\mu_1],[\mu_2]) \circ \gamma_{L([\nu],[\nu])}+L([\nu],[\nu])=Q_{L([\nu],[\nu])} \circ L([\mu_1],[\mu_2])
\end{split}
\end{equation*}
as required. Since $Q_u$ is an affine isomorphism of ${\rm BMO}(\mathbb R)$, this is biholomorphic.
For any $u \in {\rm BMO}_{\mathbb R}^*(\mathbb R)$, we choose the corresponding $[\nu] \in T_b$ under (\ref{trinity}).
Then, $L([\nu],[\nu])=u$. By the above formula, we have
$$
Q_u(L({\rm BE}))=Q_{L([\nu],[\nu])} \circ L({\rm BE})=L \circ \widetilde R_{[\nu]}({\rm BE})
=L({\rm BE}),
$$
and thus $L({\rm BE})$ is invariant under $Q_u$.
\end{proof}

Let $iv \in i{\rm BMO}_{\mathbb R}(\mathbb R) ^{\circ}$ for the subset given as
$$
i{\rm BMO}_{\mathbb R}(\mathbb R) ^{\circ}=i{\rm BMO}_{\mathbb R}(\mathbb R) \cap L({\rm BE}).
$$
Then,
$\gamma_{iv}(x)=\int_0^x e^{iv(t)}dt$ is a BMO embedding
satisfying $|\gamma_{iv}'|=1$. This condition means that $\gamma$ is parametrized by
its arc-length.
By Theorem \ref{Ainfty}, the image of
$\gamma_{iv}$ is a chord-arc curve.
We consider the parameter change of BMO embeddings whose images are chord-arc curves.

\begin{proposition}\label{arclength}
Let $u \in {\rm BMO}_{\mathbb R}^*(\mathbb R)$ and $iv \in i{\rm BMO}_{\mathbb R}(\mathbb R) ^{\circ}$.
Then, $\gamma_{Q_u(iv)}(x)$ is obtained from the BMO embedding $\gamma_{iv}(x')$ of arc-length parametrization
by the change of parameter $x'= \gamma_u(x)$, which is also 
a BMO embedding whose image is a chord-arc curve. 
Hence, an injective map 
$$
J:{\rm BMO}_{\mathbb R}^*(\mathbb R) \times i{\rm BMO}_{\mathbb R}(\mathbb R) ^{\circ} \to 
L({\rm BE}) \subset {\rm BMO}(\mathbb R)
$$
is defined by $J(u,iv)=Q_u(iv)=u+iP_{\gamma_u}(v)$. 
\end{proposition}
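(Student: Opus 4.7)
The plan is to verify the explicit formula for $Q_u(iv)$, identify $\gamma_{Q_u(iv)}$ with the reparametrization $\gamma_{iv}\circ\gamma_u$ by a direct change of variables, and then read off the required properties from results already established in Sections 2 and 3.

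First, I would unwind $Q_u(iv)$ using the defining formula $Q_u(w)=P_{\gamma_u}(w)+u$. Since $P_{\gamma_u}$ is a composition operator, it is $\mathbb C$-linear, so $P_{\gamma_u}(iv)=iP_{\gamma_u}(v)=iv\circ\gamma_u$. Hence
$$
Q_u(iv)(x)=u(x)+iv(\gamma_u(x))=u(x)+iP_{\gamma_u}(v)(x),
$$
and the summands give the real and imaginary parts respectively (note $P_{\gamma_u}(v)$ is real because $v$ is real). In particular $\mathrm{Re}\,Q_u(iv)=u$.

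Next, using $\gamma_u'(t)=e^{u(t)}$ and the change of variables $s=\gamma_u(t)$, which is legitimate because $\gamma_u$ is locally absolutely continuous with $\gamma_u(0)=0$, I would compute
$$
\gamma_{Q_u(iv)}(x)=\int_0^x e^{u(t)+iv(\gamma_u(t))}\,dt=\int_0^x e^{iv(\gamma_u(t))}\gamma_u'(t)\,dt=\int_0^{\gamma_u(x)}e^{iv(s)}\,ds=\gamma_{iv}(\gamma_u(x)).
$$
Setting $x=1$ and using $\gamma_u(1)=1$ together with the normalization $\int_0^1 e^{iv(s)}\,ds=1$ confirms that $Q_u(iv)$ is normalized in ${\rm BMO}(\mathbb R)$, so $\gamma_{Q_u(iv)}$ lies in the distinguished representative class; this also justifies the assertion that $\gamma_{Q_u(iv)}$ is obtained from $\gamma_{iv}$ by the parameter change $x'=\gamma_u(x)$.

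To show that $\gamma_{Q_u(iv)}$ is a BMO embedding whose image is chord-arc, I would invoke: (i) since $iv\in i{\rm BMO}_{\mathbb R}(\mathbb R)^\circ\subset L({\rm BE})$, the map $\gamma_{iv}$ is a BMO embedding with $|\gamma_{iv}'|=1$, so by the closing remark of Section \ref{curves} its image is a chord-arc curve; (ii) since $u\in{\rm BMO}_{\mathbb R}^*(\mathbb R)$, the map $\gamma_u$ is a normalized strongly quasisymmetric homeomorphism; and (iii) the remark following Lemma \ref{composition} then guarantees that $\gamma_{iv}\circ\gamma_u=\gamma_{Q_u(iv)}$ is a BMO embedding. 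Its image coincides with $\gamma_{iv}(\mathbb R)$, which is chord-arc. Consequently $Q_u(iv)=L(\gamma_{Q_u(iv)})\in L({\rm BE})$, so $J$ is well-defined into $L({\rm BE})$.

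For injectivity, if $J(u,iv)=J(u',iv')$ then, comparing real parts, $u=u'$, hence $\gamma_u=\gamma_{u'}$; comparing imaginary parts, $v\circ\gamma_u=v'\circ\gamma_u$, and bijectivity of $\gamma_u$ forces $v=v'$. There is no serious obstacle here: the argument is essentially a change-of-variables computation, and the only care needed is to cite the correct identifications (particularly the arc-length criterion for chord-arc images and the invariance of the BMO-embedding property under precomposition with elements of ${\rm SQS}$) from the earlier sections.
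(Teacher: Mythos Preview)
Your proof is correct and takes essentially the same route as the paper: the identical change-of-variables identity $\gamma_{Q_u(iv)}=\gamma_{iv}\circ\gamma_u$, followed by the BMO-embedding invariance under precomposition with $\gamma_u\in{\rm SQS}$ (you cite the remark after Lemma~\ref{composition}, the paper cites the equivalent Proposition~\ref{Lequation}). You additionally spell out the normalization check, the chord-arc conclusion, and the injectivity argument, all of which the paper leaves implicit.
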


\begin{proof}
Since $Q_u(iv)=u+iP_{\gamma_u}(v)=u+iv \circ \gamma_u$, we have
\begin{equation*}
\begin{split}
\gamma_{Q_u(iv)}(x)=\int_0^x e^{u(t)} e^{iv \circ \gamma_u(t)}dt
=\int_0^x \gamma'_u(t) e^{iv \circ \gamma_u(t)}dt
=\int_0^{\gamma_u(x)} e^{iv(s)}ds=\gamma_{iv}(\gamma_u(x))
\end{split}
\end{equation*}
by $s=\gamma_u(t)$. By Proposition \ref{Lequation}, we see that the parameter change of a BMO embedding 
by $\gamma_u \in {\rm SQS}$ is
also a BMO embedding.
\end{proof}

Let ${\rm CA}$ be a subset of ${\rm BE}$ consisting of all normalized BMO embeddings $\gamma:\mathbb R \to \mathbb C$
whose images are chord-arc curves.
By Theorem \ref{Ainfty}, we see that  
\begin{equation}\label{LCA}
L({\rm CA})=L({\rm BE}) \cap \{w \in {\rm BMO}(\mathbb R) \mid {\rm Re}\, w \in {\rm BMO}_{\mathbb R}^*(\mathbb R)\}.
\end{equation}
In particular, both ${\rm BMO}_{\mathbb R}^*(\mathbb R)$ and $i{\rm BMO}_{\mathbb R}(\mathbb R)^\circ$
are contained in $L({\rm CA})$. 

\begin{proposition}\label{BMO*}
The affine isomorphism $Q_u$ for every $u \in {\rm BMO}_{\mathbb R}^*(\mathbb R)$ keeps the convex open subset
${\rm BMO}_{\mathbb R}^*(\mathbb R)$ invariant, and hence it keeps $L(\rm CA)$ invariant.
More explicitly, $Q_u$ maps the affine subspace $u_1+i{\rm BMO}_{\mathbb R}(\mathbb R)$
for any $u_1 \in {\rm BMO}_{\mathbb R}^*(\mathbb R)$ onto $u_2+i{\rm BMO}_{\mathbb R}(\mathbb R)$  
for $u_2=Q_u(u_1)\in {\rm BMO}_{\mathbb R}^*(\mathbb R)$. 
\end{proposition}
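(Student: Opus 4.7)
The plan is to unwind the formula $Q_u(w) = w \circ \gamma_u + u$ explicitly on a point $w = u_1 + i v_1$ of the affine slice $u_1 + i {\rm BMO}_{\mathbb R}(\mathbb R)$, separate real and imaginary parts, and then recognize the real part as $\log$ of the derivative of a composition of strongly quasisymmetric homeomorphisms. Since $\gamma_u$ is real-valued, we obtain
\[
Q_u(u_1 + i v_1) = \bigl(u_1 \circ \gamma_u + u\bigr) + i\,(v_1 \circ \gamma_u),
\]
so the proposition reduces to two assertions: first, $u_2 := u_1 \circ \gamma_u + u$ lies in ${\rm BMO}_{\mathbb R}^*(\mathbb R)$; second, the map $v_1 \mapsto v_1 \circ \gamma_u$ is a bijection of ${\rm BMO}_{\mathbb R}(\mathbb R)$ onto itself.

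For the first point, I would compute
\[
e^{u_2(x)} = e^{u_1(\gamma_u(x))}\,e^{u(x)} = \gamma_{u_1}'(\gamma_u(x))\,\gamma_u'(x) = (\gamma_{u_1} \circ \gamma_u)'(x).
\]
By hypothesis $u_1, u \in {\rm BMO}_{\mathbb R}^*(\mathbb R)$, so $\gamma_{u_1}$ and $\gamma_u$ both lie in ${\rm SQS}$. Since ${\rm SQS}$ is a group under composition (as recorded just after \eqref{alphaK}), the composition $\gamma_{u_1} \circ \gamma_u$ is again strongly quasisymmetric, whence its derivative is an $A_\infty$-weight. Taking logarithms gives $u_2 \in {\rm BMO}_{\mathbb R}^*(\mathbb R)$, which proves that $Q_u$ sends ${\rm BMO}_{\mathbb R}^*(\mathbb R)$ into itself. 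For the second point, Theorem \ref{pullback} applied to the strongly quasisymmetric map $\gamma_u$ tells us that $P_{\gamma_u}$ is a linear isomorphism of ${\rm BMO}(\mathbb R)$; because $\gamma_u$ is real-valued, this restricts to a linear isomorphism of the real subspace ${\rm BMO}_{\mathbb R}(\mathbb R)$ onto itself. Hence $v_1 \mapsto v_1 \circ \gamma_u$ is a bijection, which combined with the first step gives the claimed bijection of $u_1 + i{\rm BMO}_{\mathbb R}(\mathbb R)$ onto $u_2 + i{\rm BMO}_{\mathbb R}(\mathbb R)$.

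To deduce invariance of the convex open set ${\rm BMO}_{\mathbb R}^*(\mathbb R)$ itself, I would apply the preceding slice statement to $v_1 = 0$ (so that $u_2 = Q_u(u_1)$) and, for surjectivity onto ${\rm BMO}_{\mathbb R}^*(\mathbb R)$, observe that the inverse map $Q_u^{-1}$ is of the form $Q_{u^\sharp}$ for some $u^\sharp \in {\rm BMO}_{\mathbb R}^*(\mathbb R)$ (namely the one corresponding to $\gamma_u^{-1} \in {\rm SQS}$), so the same argument applied to $u^\sharp$ completes the bijection. Finally, for the invariance of $L({\rm CA})$, I would invoke the characterization \eqref{LCA}: by Proposition \ref{Lequation}, $Q_u$ preserves $L({\rm BE})$, and by what we just proved it also preserves the set $\{w \in {\rm BMO}(\mathbb R) : {\rm Re}\, w \in {\rm BMO}_{\mathbb R}^*(\mathbb R)\}$ (since $Q_u$ commutes with the real-part operator in the sense that $\mathrm{Re}\, Q_u(w) = Q_u(\mathrm{Re}\, w)$), hence it preserves their intersection $L({\rm CA})$.

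There is no real obstacle here; the only substantive input is the group property of ${\rm SQS}$ together with Theorem \ref{pullback}, and the whole proposition is essentially a bookkeeping exercise once one makes the identification $e^{u_1 \circ \gamma_u + u} = (\gamma_{u_1} \circ \gamma_u)'$. The one point worth being careful about is the verification that $Q_u^{-1}$ is of the same form $Q_{u^\sharp}$, which hinges on $\gamma_u^{-1}$ being strongly quasisymmetric; this in turn follows from the fact that ${\rm SQS}$ is closed under inversion.
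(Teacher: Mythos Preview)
Your proof is correct and follows essentially the same approach as the paper: both identify $\exp(Q_u(u_1)) = (\gamma_{u_1}\circ\gamma_u)'$ via the chain rule and invoke the group property of ${\rm SQS}$ to conclude $Q_u(u_1)\in{\rm BMO}_{\mathbb R}^*(\mathbb R)$. Your version is more detailed than the paper's, which only spells out this core computation and leaves the surjectivity, the affine-slice statement, and the $L({\rm CA})$ invariance implicit; your explicit verification that $Q_u^{-1}=Q_{u^\sharp}$ with $u^\sharp=\log(\gamma_u^{-1})'$ and your appeal to \eqref{LCA} together with Proposition~\ref{Lequation} are exactly the right way to fill in those gaps.
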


\begin{proof}
For $u, u' \in {\rm BMO}_{\mathbb R}^*(\mathbb R)$, we take the corresponding strongly quasisymmetric
homeo\-morphisms $\gamma_u, \gamma_{u'} \in {\rm SQS}$. Then, the indefinite
integral of 
$$
\exp(Q_u(u'))=\exp(u'\circ \gamma_u+u)
$$ 
is $\gamma_{u'} \circ \gamma_{u}$. Since ${\rm SQS}$ is a group under the composition,
we see that $\gamma_{u'} \circ \gamma_{u} \in {\rm SQS}$, and thus 
$Q_u(u') \in {\rm BMO}_{\mathbb R}^*(\mathbb R)$.
\end{proof}

Proposition \ref{arclength} implies that the image of
${\rm BMO}_{\mathbb R}^*(\mathbb R) \times i{\rm BMO}_{\mathbb R}(\mathbb R)^\circ$ by the injection $J$ is
in fact contained in $L({\rm CA})$. Further, we have:

\begin{theorem}\label{bijective}
Every BMO embedding whose image is a chord-arc curve is represented by $\gamma_{Q_u(iv)}$
for some $u \in {\rm BMO}_{\mathbb R}^*(\mathbb R)$ and $iv \in i{\rm BMO}_{\mathbb R}(\mathbb R)^\circ$.
Hence, 
$$
J:{\rm BMO}_{\mathbb R}^*(\mathbb R) \times i{\rm BMO}_{\mathbb R}(\mathbb R)^\circ \to 
L({\rm CA})
$$
is a bijection.
\end{theorem}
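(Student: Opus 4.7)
The plan is to prove the surjectivity of $J$ onto $L({\rm CA})$, since the injectivity is already part of Proposition \ref{arclength}. Fix $\gamma \in {\rm CA}$, set $w := L(\gamma)$, and let $L_0 := \int_0^1 |\gamma'(t)|\,dt$ be the arc length of $\gamma([0,1])$. By Theorem \ref{Ainfty} together with (\ref{LCA}), the class $u := [\log|\gamma'|] = [{\rm Re}\,w]$ belongs to ${\rm BMO}_{\mathbb R}^*(\mathbb R)$; its normalized representative is $\log|\gamma'| - \log L_0$, and the corresponding $\gamma_u \in {\rm SQS}$ is the normalized arc-length function of $\gamma$ with $\gamma_u'(x) = |\gamma'(x)|/L_0$.

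Next, I would consider $\gamma^* := \gamma \circ \gamma_u^{-1}$. Since ${\rm SQS}$ is a group and pre-composition of a BMO embedding by an SQS homeomorphism again gives a BMO embedding (the remark following Lemma \ref{composition}), $\gamma^*$ lies in ${\rm BE}$, automatically normalized because $\gamma_u$ fixes $0$, $1$, $\infty$. A direct chain-rule computation using $\gamma_u' = |\gamma'|/L_0$ then yields
$$
(\gamma^*)'(y) = L_0 \cdot \frac{\gamma'}{|\gamma'|}\bigl(\gamma_u^{-1}(y)\bigr),
$$
so $|(\gamma^*)'| \equiv L_0$. Hence ${\rm Re}\,\log(\gamma^*)' \equiv \log L_0$ is constant, which means the BMO class $iv := L(\gamma^*)$ is purely imaginary; combined with $\gamma^* \in {\rm BE}$, this places $iv$ in $i{\rm BMO}_{\mathbb R}(\mathbb R) \cap L({\rm BE}) = i{\rm BMO}_{\mathbb R}(\mathbb R)^{\circ}$.

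Finally, I would verify $J(u, iv) = w$: from $\gamma = \gamma^* \circ \gamma_u$ and the chain rule, $\log \gamma' = \log(\gamma^*)' \circ \gamma_u + \log \gamma_u'$, which at the level of BMO classes reads $w = iv \circ \gamma_u + u = Q_u(iv) = J(u, iv)$. The subtlety I anticipate most is the careful bookkeeping of normalized representatives: one must work with the normalized arc-length $\gamma_u$ (rather than an arbitrary representative) to make the real part of $L(\gamma^*)$ constant, and thereby ensure that $iv$ lands in $i{\rm BMO}_{\mathbb R}(\mathbb R)^{\circ}$ rather than merely in $i{\rm BMO}_{\mathbb R}(\mathbb R)$.
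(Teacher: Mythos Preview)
Your proposal is correct and follows essentially the same approach as the paper: take $u = {\rm Re}\,w \in {\rm BMO}_{\mathbb R}^*(\mathbb R)$, reparametrize $\gamma$ by $\gamma_u^{-1}$ to obtain an arc-length parametrized curve $\gamma^*$, and recover $iv = L(\gamma^*) \in i{\rm BMO}_{\mathbb R}(\mathbb R)^\circ$. The paper's proof is terser---it defines $v$ algebraically via $P_{\gamma_u}(v)=v'$ where $v'={\rm Im}\,w$ and then invokes the parameter-change identity from Proposition \ref{arclength}---while your version makes the normalization constants and the verification of $|(\gamma^*)'|\equiv L_0$ explicit, but the content is the same.
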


\begin{proof}
Let $\gamma_{u+iv'}$ be any BMO embedding for $u+iv' \in L({\rm CA})$. 
Theorem \ref{Ainfty} shows that $u \in {\rm BMO}_{\mathbb R}^*(\mathbb R)$.
Then, by choosing 
$v \in {\rm BMO}_{\mathbb R}(\mathbb R)$ satisfying $P_{\gamma_u}(v)=v'$, we see that $\gamma_{u+iv'}(x)$ is obtained
from $\gamma_{iv}(x')$ by the change of the parameter $x'= \gamma_u(x)$. Hence, 
$\gamma_{u+iv'}=\gamma_{Q_u(iv)}$. This implies that $J$ is surjective.
Combined with Proposition \ref{arclength}, this proves that $J$ is bijective.
\end{proof}

We conclude that $i{\rm BMO}_{\mathbb R}(\mathbb R)^\circ$ is a parameter space of all chord-arc curves
with arc-length parametrizations and ${\rm BMO}_{\mathbb R}^*(\mathbb R)$ is a parameter space
for the change of their parameters. 
Although $J$ gives a bijection onto $L({\rm CA})$, 
we will see later that $J$ is not a homeomorphism (Theorem \ref{mapL}).

\begin{remark}
In \cite[Section 5]{FHS},  
the set of all normalized strongly quasisymmetric embeddings of $\mathbb R$ called 
the {\it extended BMO Teichm\"ul\-ler space} was denoted by $\hat T_b$.
We see that $\hat T_b$ coincides with $\rm CA$ if we use Theorem \ref{0.2} in our paper. 
It was also proved that $L|_{\rm CA}$ is a bijection onto an open subset of ${\rm BMO}(\mathbb R)$ (Theorem 5.2).
Moreover, the product structure of $\hat T_b$ was given (Theorem 5.3),
which is ${\rm CA}={\rm SQS} \times {\rm ICA}$ in our notation explained in Section 7. This
essentially defines the bijection $J$ through $L$.
Under this translation, Problem 5.4 in \cite{FHS} can be understood
as asking whether $J$ is a homeomorphism or not.
\end{remark}

\section{Biholomorphic correspondence on the space of chord-arc curves}
We will show that $L$ is biholomorphic on ${\rm CA}$. 
We note that the space ${\rm SQS} \subset {\rm BE}$ of all normalized strongly quasisymmetric homeomorphisms
$\gamma:\mathbb R \to \mathbb R$ is contained in ${\rm CA}$ because the image of $\gamma$ is $\mathbb R$,
which is of course a chord-arc curve. Hence, we in particular obtain that there is a biholomorphic correspondence
between the neighborhood of ${\rm SQS}$ in $\rm BE$ and that of ${\rm BMO}^*_{\mathbb R}(\mathbb R)$ in
${\rm BMO}(\mathbb R)$ under $L$.

\begin{theorem}\label{biholo}
The holomorphic map $L:{\rm BE} \to {\rm BMO}(\mathbb R)$ restricted to ${\rm CA} \subset {\rm BE}$
is a biholomorphic homeomorphism onto its image. This in particular implies that
${\rm CA}$ is open in ${\rm BE}$ and $L({\rm CA})$ is open in ${\rm BMO}(\mathbb R)$.
\end{theorem}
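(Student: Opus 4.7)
The plan is to apply the inverse function theorem in complex Banach manifolds: once we establish injectivity of $L|_{\rm CA}$, openness of ${\rm CA}$ in ${\rm BE}$, and that $dL_\gamma$ is a topological linear isomorphism at every $\gamma \in {\rm CA}$, the map $L|_{\rm CA}$ is automatically a biholomorphism onto an open image. Injectivity is immediate from the normalization: if $L(\gamma_1) = L(\gamma_2)$, then $\gamma_1' = c\,\gamma_2'$ almost everywhere for some $c \in \mathbb C \setminus \{0\}$, and integrating from $0$ using $\gamma_i(0)=0$, $\gamma_i(1)=1$ forces $c = 1$. Openness of ${\rm CA}$ follows from Theorem \ref{Ainfty}: $\gamma \in {\rm CA}$ iff $|\gamma'| = e^{\mathrm{Re}\,L(\gamma)}$ is an $A_\infty$-weight iff $\mathrm{Re}\,L(\gamma) \in {\rm BMO}_{\mathbb R}^*$, so ${\rm CA}$ is the preimage of the open set ${\rm BMO}_{\mathbb R}^*$ (Proposition \ref{convex}) under the continuous map $\mathrm{Re} \circ L$ (Theorem \ref{holo}).

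For the derivative at the origin, I would use the pre-Schwarzian identifications of Theorem \ref{model} to write $T_{([0],[0])}{\rm BE} \cong {\rm BMOA}(\mathbb L) \oplus {\rm BMOA}(\mathbb U)$. Differentiating the formula $\log \gamma' = \log h' \circ f + \log f'$ from the proof of Proposition \ref{curve} gives $dL_{([0],[0])}(\phi,\psi) = b(\phi) + b(\psi)$, the sum of boundary extensions. By Theorem \ref{131} each $b$ is a topological linear isomorphism onto its image, and the Riesz projections via the Hilbert transform on BMO yield a topological direct sum ${\rm BMO}(\mathbb R)/\mathrm{const} \cong b({\rm BMOA}(\mathbb U)) \oplus b({\rm BMOA}(\mathbb L))$; hence $dL_{([0],[0])}$ is iso. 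By Proposition \ref{Lequation} the equivariance $L \circ \widetilde R_{[\nu]} = Q_{L([\nu],[\nu])} \circ L$ combined with the biholomorphy of $\widetilde R_{[\nu]}$ (Lemma \ref{auto}) and $Q_u$ implies that $dL_\gamma$ is iso iff $dL_{\widetilde R_{[\nu]}(\gamma)}$ is iso, so iso at the origin propagates to the whole orbit $\widetilde R_{T_b}(\text{origin}) = {\rm SQS}$.

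The main obstacle is extending $dL$ iso from ${\rm SQS}$ to all of ${\rm CA}$. For $\gamma \in {\rm CA}$, writing $\gamma = \widetilde R_{[\nu]}(h)$ with $h = \Phi(\gamma) \in {\rm RM}^\circ$ and $[\nu]$ corresponding to $\Pi(\gamma)$ reduces, via the same equivariance, the iso question to $dL_h$ iso for every $h \in {\rm RM}^\circ$. The second coordinate direction gives $dL_h(0,\psi) = b(\psi)$, iso onto $b({\rm BMOA}(\mathbb U))$; the task is to show the first coordinate direction $(\phi,0)$ contributes an iso onto a topological complement. A direct computation using the welding decomposition $\gamma_t = h_t \circ f_t$ yields
\[
dL_h(\phi,0) = b(\dot{\mathcal L}_{H_t}) + \mathcal L_h' \cdot \dot f_0 + \dot{\log f_t'},
\]
where the correction terms arise from the pre-Schwarzian $\mathcal L_h'$ and the infinitesimal SQS variation. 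The core technical step is to verify, using the chord-arc regularity of $h$ (i.e.\ $|h'|$ being an $A_\infty$-weight, so that $\mathcal L_h$ sits in ${\rm BMOA}$ with controlled boundary behavior), that the Riesz projection of this expression onto $b({\rm BMOA}(\mathbb L))$ depends isomorphically on $\phi \in {\rm BMOA}(\mathbb L)$. Once $dL$ is iso everywhere on ${\rm CA}$, the Banach inverse function theorem combined with the injectivity of $L$ yields the biholomorphism and the openness of $L({\rm CA})$.
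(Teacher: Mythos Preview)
Your overall strategy (injectivity, openness of ${\rm CA}$, then inverse function theorem) is genuinely different from the paper's. The paper never computes $dL$; instead it constructs a local holomorphic inverse of $L$ directly, first at every point $iv\in i{\rm BMO}_{\mathbb R}(\mathbb R)^\circ$ using Semmes' explicit quasiconformal extension (\cite[Proposition~4.13]{Se}), which produces a continuous (in fact G\^ateaux holomorphic and locally bounded) map $w\mapsto\mu_w\in\mathcal M(\mathbb U)\times\mathcal M(\mathbb L)$ on a BMO-neighborhood of $iv$, and then transports this local inverse to an arbitrary point of $L({\rm CA})$ via the equivariance of Proposition~\ref{Lequation} together with Theorem~\ref{bijective}. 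Your direct argument for the openness of ${\rm CA}$ (preimage of ${\rm BMO}_{\mathbb R}^*$ under ${\rm Re}\circ L$) and your derivative computation at the origin are both clean and correct, and the reduction via $\widetilde R_{[\nu]}$ to a cross-section is exactly the mechanism the paper uses too---only the paper reduces to ${\rm ICA}$ rather than ${\rm RM}^\circ$.

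The gap is precisely the step you flag as ``the core technical step.'' Your proposed computation of $dL_h(\phi,0)$ for $h\in{\rm RM}^\circ$ differentiates the welding decomposition $\gamma_t=h_t\circ f_t$ with $h_t=\Phi(\gamma_t)$. But $\Phi$ is \emph{not continuous} on ${\rm BE}$ (Proposition~\ref{continuity}), and in particular the left-translation map $[\nu]\mapsto[\mu_2]\ast[\nu]$ need not be continuous at $[0]$; hence $t\mapsto h_t=([0],[\mu_2]\ast[\mu_1(t)]^{-1})$ need not be differentiable, and the term $b(\dot{\mathcal L}_{H_t})$ in your formula is not a priori defined. The identity $L(\gamma_t)=\log h_t'\circ f_t+\log f_t'$ holds pointwise for each $t$, and the left side is holomorphic in $t$, but you cannot conclude that each summand on the right is separately differentiable---there may be cancellation you cannot separate. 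So the route through ${\rm RM}^\circ$ and the welding is obstructed by the very discontinuity phenomenon the paper later exploits in Theorem~\ref{main}. The paper sidesteps this by working on ${\rm ICA}$, where Semmes' extension furnishes the missing analytic input without ever invoking $\Phi$.
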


\begin{proof}
It suffices to show that $L$ has a local holomorphic inverse at 
any point $w \in L({\rm CA}) \subset {\rm BMO}(\mathbb R)$. 
We see that if $w=iv \in i{\rm BMO}_{\mathbb R}(\mathbb R)^\circ$,
then there is a neighborhood $V_{iv} \subset L({\rm CA})$ of $iv$ and
a holomorphic map $\Psi_{iv}^*:V_{iv} \to {\rm BE}$ such that $L \circ \Psi_{iv}^*={\rm id}|_{V_{iv}}$.
Indeed, it was proved in \cite[Proposition 4.13]{Se} that there is a BMO quasiconformal homeomorphism $G_w:\mathbb C \to \mathbb C$
with complex dilatation $\mu_w$ 
for every $w$ in some neighborhood $V_{iv}$ so that 
the map $\Lambda_{iv}:V_{iv} \to \mathcal M(\mathbb U) \times \mathcal M(\mathbb L)$ defined by this correspondence
is continuous at $w=iv$. Since ${\rm BMO}_{\mathbb R}^*(\mathbb R)$ is open by Proposition \ref{convex},
we may choose a smaller neighborhood $V_{iv}$ so that ${\rm Re}\, w \in {\rm BMO}_{\mathbb R}^*(\mathbb R)$ if $w \in V_{iv}$.
In this case, the image of the BMO embedding $G_w|_{\mathbb R}$ is a chord-arc curve by Theorem \ref{Ainfty}.
Then, the local inverse $\Psi_{iv}^*$ of $L$ is given by $\widetilde \pi \circ \Lambda_{iv}$, and $V_{iv} \subset L({\rm CA})$
by (\ref{LCA}).

The holomorphy of $\Lambda_{iv}$ is proved as follows.
We may assume that $\Lambda_{iv}$ is bounded on $V_{iv}$ by the continuity of $\Lambda_{iv}$ at $iv$. 
Then, it suffices to show that
$\Lambda_{iv}$ is a G\^ateaux holomorphic function
(see \cite[Theorem 14.9]{Ch}, \cite[Theorem 36.5]{Mu}). However, because $\mu_w$ can be represented
explicitly in terms of $w \in V_{iv}$, we can verify this weak holomorphy as in \cite[Theorem 6.1]{SW}
though the norm used there is different from ours.
A similar argument using the norm of $\mathcal M(\mathbb U)$ is contained in \cite{WM-6}.

If $w=u+iv'$ is an arbitrary point in $L({\rm CA})$, then by Theorem \ref{bijective},
we can find $iv \in i{\rm BMO}_{\mathbb R}(\mathbb R)^\circ$ satisfying $Q_u(iv)=u+iv'$. 
Since $Q_u$ is a biholomorphic automorphism of ${\rm BMO}(\mathbb R)$ keeping $L({\rm CA})$ invariant by 
Proposition \ref{BMO*},
we see that $\widetilde R_{[\nu]}\circ \Psi_{iv}^* \circ Q_u^{-1}$ is holomorphic on $Q_u(V_{iv}) \subset L({\rm CA})$
for $[\nu] \in T_b$ corresponding to $u \in {\rm BMO}_{\mathbb R}^*(\mathbb R)$. Then, Proposition \ref{Lequation} shows that
$$
L \circ \widetilde R_{[\nu]} \circ \Psi_{iv}^* \circ Q_u^{-1}=Q_{L([\nu],[\nu])} \circ L \circ \Psi_{iv}^* \circ Q_u^{-1}
=Q_{L([\nu],[\nu])} \circ Q_u^{-1}=\rm id
$$
on $Q_u(V_{iv})$. Hence, $\widetilde R_{[\nu]}\circ \Psi_{iv}^* \circ Q_u^{-1}$ is 
a local holomorphic inverse of $L$ on $Q_u(V_{iv})$.
\end{proof}

Let ${\rm ICA} \subset {\rm CA}$ denote the subset of all normalized 
chord-arc curves
with arc-length parametrization. Namely,
$$
{\rm ICA}=L^{-1}(i{\rm BMO}_{\mathbb R}(\mathbb R)^{\circ}).
$$
As $i{\rm BMO}_{\mathbb R}(\mathbb R)^{\circ}$ is a real-analytic submanifold of 
the domain $L({\rm CA})$ in the complex Banach space ${\rm BMO}(\mathbb R)$,
${\rm ICA}$ is a real-analytic submanifold of the complex manifold ${\rm CA}$.
Similarly, since ${\rm SQS}=L^{-1}({\rm BMO}_{\mathbb R}^*(\mathbb R))$, 
this is also a real-analytic submanifold of ${\rm CA}$.

\begin{corollary}\label{real-analytic}
$(1)$
${\rm SQS}$ is a real-analytic submanifold of ${\rm CA}$ that is
the diagonal of $T_b(\mathbb U) \times T_b(\mathbb L)$ in the Bers coordinates, and
$L|_{{\rm SQS}}$ is a real-analytic homeomorphism onto ${\rm BMO}_{\mathbb R}^*(\mathbb R)$ whose inverse 
is also real-analytic.
$(2)$
${\rm ICA}$ is a real-analytic submanifold of ${\rm CA}$, and
$L|_{\rm ICA}$ 
is a real-analytic homeomorphism onto $i{\rm BMO}_{\mathbb R}(\mathbb R)^{\circ}$ whose inverse 
is also real-analytic.
\end{corollary}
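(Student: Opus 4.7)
The plan is to deduce both parts directly from Theorem \ref{biholo}, which provides that $L:\mathrm{CA}\to L(\mathrm{CA})$ is a biholomorphic homeomorphism between open sets in complex Banach manifolds. A biholomorphic map pulls back real-analytic submanifolds to real-analytic submanifolds, so the work reduces to identifying the images $L(\mathrm{SQS})$ and $L(\mathrm{ICA})$ and recognizing them as real-analytic submanifolds of $L(\mathrm{CA})$.

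First I would handle part (1). If $\gamma\in\mathrm{SQS}$, then $\gamma'$ is a positive $A_\infty$-weight, so $\log\gamma'\in\mathrm{BMO}_{\mathbb R}^*(\mathbb R)$; conversely, for any $u\in\mathrm{BMO}_{\mathbb R}^*(\mathbb R)$, the indefinite integral $\gamma_u(x)=\int_0^x e^{u(t)}dt$ lies in $\mathrm{SQS}$ and satisfies $L(\gamma_u)=u$. Theorem \ref{bijective} (applied with $v=0$) confirms this identification. Hence $L(\mathrm{SQS})=\mathrm{BMO}_{\mathbb R}^*(\mathbb R)$, which is an open subset of the real Banach subspace $\mathrm{BMO}_{\mathbb R}(\mathbb R)\subset\mathrm{BMO}(\mathbb R)$ by Proposition \ref{convex}, and is therefore a real-analytic submanifold of $\mathrm{BMO}(\mathbb R)$. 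Since $L$ is biholomorphic on $\mathrm{CA}$, the preimage $\mathrm{SQS}=L^{-1}(\mathrm{BMO}_{\mathbb R}^*(\mathbb R))$ is a real-analytic submanifold of $\mathrm{CA}$, and $L|_{\mathrm{SQS}}$ is a real-analytic homeomorphism with real-analytic inverse.

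For the statement that $\mathrm{SQS}$ is the diagonal in the Bers coordinates, I would argue as follows: for any $f\in\mathrm{SQS}$, Theorem \ref{FKP} produces a bi-Lipschitz quasiconformal extension $F:\mathbb U\to\mathbb U$ with $\mu_F=\mu_1\in\mathcal M(\mathbb U)$; reflecting across $\mathbb R$ yields an extension to $\mathbb C$ whose complex dilatation on $\mathbb L$ is $\overline{\mu_1(\bar z)}$. Under the identification of $T_b(\mathbb U)$ and $T_b(\mathbb L)$ via reflection (which is exactly the identification used in Section 4 to denote both by $T_b$), this gives a point $([\mu_1],[\mu_1])$ on the diagonal. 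Conversely, a diagonal class $([\mu],[\mu])$ admits a representative symmetric under reflection, so $G(\mu_1,\mu_2)$ fixes $\mathbb R$ setwise and hence restricts to an element of $\mathrm{SQS}$.

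Part (2) is immediate by the same mechanism: by definition $\mathrm{ICA}=L^{-1}(i\mathrm{BMO}_{\mathbb R}(\mathbb R)^\circ)$, and $i\mathrm{BMO}_{\mathbb R}(\mathbb R)^\circ=i\mathrm{BMO}_{\mathbb R}(\mathbb R)\cap L(\mathrm{CA})$ is the intersection of an open subset of $\mathrm{BMO}(\mathbb R)$ with a closed real Banach subspace, hence a real-analytic submanifold of $L(\mathrm{CA})$. Transporting through the biholomorphism $L$ gives the result for $\mathrm{ICA}$ and for $L|_{\mathrm{ICA}}$.

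There is no real obstacle here: the entire content has been compressed into Theorem \ref{biholo}, and the corollary is essentially a translation exercise. The only subtle point worth spelling out is the identification $L(\mathrm{SQS})=\mathrm{BMO}_{\mathbb R}^*(\mathbb R)$ together with the diagonal description, since one must pass correctly between the normalization of $\mathrm{SQS}$ as a set of self-maps of $\mathbb R$ and the Bers representation via pairs of Beltrami coefficients.
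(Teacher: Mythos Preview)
Your proposal is correct and follows essentially the same approach as the paper: the paper states the corollary without explicit proof, having already observed in the surrounding text that $\mathrm{SQS}=L^{-1}(\mathrm{BMO}_{\mathbb R}^*(\mathbb R))$ and $\mathrm{ICA}=L^{-1}(i\mathrm{BMO}_{\mathbb R}(\mathbb R)^\circ)$ are preimages of real-analytic submanifolds under the biholomorphism $L$ of Theorem~\ref{biholo}, and that the diagonal description of $\mathrm{SQS}$ was established in Section~4. Your write-up simply makes these deductions explicit.
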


Part (1) of the above corollary extends Proposition \ref{topequiv} concerning $\Psi=L|_{\rm SQS}$.
This shows that ${\rm SQS}$ is equipped with both the complex-analytic structure of $T_b$
and the real-analytic structure of ${\rm BMO}_{\mathbb R}^*(\mathbb R)$, which are real-analytically equivalent.
Introducing the real-analytic structure to $T_b$ from the real-analytic submanifold ${\rm SQS}$
on the diagonal of ${\rm BE} \cong T_b(\mathbb U) \times T_b(\mathbb L)$ is usual if we know the theory of
quasifuchsian deformation spaces.

\begin{remark}
We consider a problem of the connectivity of $\rm CA$ (see \cite[p.614]{AZ} and \cite[Problem 5.5]{FHS}).
Since $L$ is a homeomorphism on $\rm CA$, we may transfer this problem to $L({\rm CA}) \subset {\rm BMO}(\mathbb R)$.
By Theorem \ref{bijective}, $J:{\rm BMO}_{\mathbb R}^*(\mathbb R) \times i{\rm BMO}_{\mathbb R}(\mathbb R)^\circ \to 
L({\rm CA})$ is bijective. Even though $J$ is not continuous, for each fixed $u \in {\rm BMO}_{\mathbb R}^*(\mathbb R)$,
$J(u, \cdot)=Q_u=u+P_{\gamma_u}$ is continuous on $i{\rm BMO}_{\mathbb R}(\mathbb R)^\circ$.
Thus, the problem on the connectivity for $L({\rm CA})$ is reduced to that for $i{\rm BMO}_{\mathbb R}(\mathbb R) ^{\circ}
=i{\rm BMO}_{\mathbb R}(\mathbb R) \cap L({\rm BE})$. Since ${\rm BE} \cong T_b(\mathbb U) \times  T_b(\mathbb L)$,
$L({\rm BE})$ is connected.
Then, the problem is to show the connectivity of the intersection of
$L({\rm BE})$ with the linear subspace
$i{\rm BMO}_{\mathbb R}(\mathbb R)$.
If this were not connected, then distinct components would be joined by a path in $L({\rm BE})$ taking a detour 
whose real parts escape from
${\rm BMO}_{\mathbb R}^*(\mathbb R)$.
\end{remark}

We add one more property to
the biholomorphic mapping 
$L:{\rm CA} \to {\rm BMO}(\mathbb R)$ restricted to ${\rm SQS}$,
which is concerning the correspondence of bounded subsets. 
Here, the boundedness in ${\rm SQS}$ is considered regarding
a metric structure of ${\rm SQS} \cong T_b$.
The invariant metric provided for $T_b$ is the Carleson metric (see \cite{WM-3}), and
let $d_c$ denote the Carleson distance in $T_b$. 

The composition operator $P_f$ is a bounded linear operator as in Theorem \ref{pullback}.
Concerning uniform boundedness of this operator norm,
we see the following proposition.
This is stated in \cite[p.18]{CM}. We provide a proof for it here.

\begin{proposition}\label{uniformPh}
For any $f \in {\rm SQS}$,
let $P_f:{\rm BMO}(\mathbb R) \to {\rm BMO}(\mathbb R)$ be the bounded linear operator defined by
$w \mapsto w \circ f$ for every $w \in {\rm BMO}(\mathbb R)$. Then, there exist 
constants $\tau_0>0$ and $C_0>0$ such that
the operator norm of $P_f$ satisfies $\Vert P_f \Vert \leq C_0$
for every $f \in {\rm SQS}$ with $\Vert \log f' \Vert_* \leq \tau_0$.
\end{proposition}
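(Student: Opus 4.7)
\medskip
\noindent\textbf{Proof proposal.} The plan is to estimate the mean oscillation of $w \circ f$ on an arbitrary interval by a change of variables, the John--Nirenberg inequality, and the $A_\infty$-characterisation \eqref{SD} applied to $(f^{-1})'$; then to extract uniform constants from the smallness assumption on $\Vert \log f'\Vert_\ast$. Fix $w \in {\rm BMO}(\mathbb R)$ and a bounded interval $I \subset \mathbb R$, and write $J = f(I)$. Since $f$ is locally absolutely continuous with $f' \in A_\infty$, substituting $y = f(x)$ gives
\begin{equation*}
\frac{1}{|I|}\int_I |w \circ f(x) - w_J|\,dx = \frac{1}{|I|}\int_J |w(y) - w_J|\,(f^{-1})'(y)\,dy,
\end{equation*}
and because $\int_J (f^{-1})'\,dy = |I|$, the measure $d\mu(y) := (f^{-1})'(y)\,dy/|I|$ is a probability measure on $J$.

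Next, I would use the layer-cake formula with $E_\lambda = \{y \in J : |w(y) - w_J| > \lambda\}$, converting the right-hand side above into $\int_0^\infty \mu(E_\lambda)\,d\lambda$. The John--Nirenberg inequality furnishes $|E_\lambda|/|J| \leq C_{\mathrm{JN}}'\exp(-C_{JN}\lambda/\Vert w\Vert_\ast)$ for a universal $C_{\mathrm{JN}}'$, and \eqref{SD} applied to the weight $(f^{-1})'$ on the subset $E_\lambda$ of $J$ bounds $\mu(E_\lambda) \leq K(|E_\lambda|/|J|)^\alpha$. Integrating the resulting exponential in $\lambda$, replacing the optimal constant by $w_J$ (at the cost of a factor of at most $2$), and taking the supremum over $I$, yields $\Vert w\circ f\Vert_\ast \leq (2K (C_{\mathrm{JN}}')^\alpha/\alpha C_{JN})\,\Vert w\Vert_\ast$, where $K$ and $\alpha$ are the $A_\infty$-constants of $(f^{-1})'$.

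The main difficulty is to show that $K$ and $\alpha$ can be chosen uniformly in $f$ when $\Vert \log f'\Vert_\ast \leq \tau_0$ is small. I plan to handle this in two steps. First, by the inversion part of Proposition \ref{nearid} combined with the homeomorphism $\Psi$ of Proposition \ref{topequiv}, one has $\Vert \log(f^{-1})'\Vert_\ast \to 0$ as $\Vert \log f'\Vert_\ast \to 0$; hence, choosing $\tau_0$ sufficiently small, we may ensure $\Vert \log(f^{-1})'\Vert_\ast \leq \tau_1$ for any preassigned $\tau_1 < C_{JN}$. Second, whenever a weight $\omega$ satisfies $\Vert \log\omega\Vert_\ast \leq \tau_1 < C_{JN}$, picking $p>1$ with $p\tau_1 < C_{JN}$ and applying John--Nirenberg to $p\log\omega$ yields the reverse Hölder inequality $\bigl(\tfrac{1}{|I|}\int_I \omega^p\bigr)^{1/p} \leq C(\tau_1)\cdot \tfrac{1}{|I|}\int_I \omega$, and Hölder's inequality then delivers \eqref{SD} with constants $K, \alpha$ depending only on $\tau_1$. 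Feeding this into the estimate of the previous paragraph produces the uniform bound $\Vert P_f\Vert \leq C_0$ of the proposition.
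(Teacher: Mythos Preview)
Your proof is correct, but it proceeds quite differently from the paper's argument. The paper does not estimate $\Vert P_f w\Vert_\ast$ by a direct harmonic-analysis computation; instead it exploits the \emph{composition} part of Proposition~\ref{nearid}. Concretely, for real $u=\log h'$ with $\Vert u\Vert_\ast\le\delta_0$ small (so that $h\in{\rm SQS}$ by Proposition~\ref{C_0}), one writes $P_f(u)=\log h'\circ f=\log(h\circ f)'-\log f'$ and uses that $\Vert\log(h\circ f)'\Vert_\ast\le 1$ whenever $\Vert\log h'\Vert_\ast\le\delta_0$ and $\Vert\log f'\Vert_\ast\le\tau_0$. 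This gives $\Vert P_f(u)\Vert_\ast\le 2$ for real $u$ on the $\delta_0$-ball, and the complex case follows by splitting into real and imaginary parts, yielding $C_0=4/\delta_0$.

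Your approach, by contrast, is essentially a quantitative version of Jones' theorem (Theorem~\ref{pullback}): you change variables, apply John--Nirenberg layer by layer, and feed in the $A_\infty$-estimate~\eqref{SD} for $(f^{-1})'$. You then make the constants uniform by controlling $\Vert\log(f^{-1})'\Vert_\ast$ via only the \emph{inversion} part of Proposition~\ref{nearid}, and extracting a uniform reverse H\"older inequality from John--Nirenberg. This is in the spirit of the remark after Lemma~\ref{general} citing \cite{Got}: $\Vert P_f\Vert$ depends only on the $A_\infty$-constants of $f'$ (equivalently, of $(f^{-1})'$), and you supply the missing step that those constants are uniform for small $\Vert\log f'\Vert_\ast$. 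Your route is more self-contained from the harmonic-analysis side and gives an explicit bound in terms of $K$ and $\alpha$; the paper's route is shorter and stays within the Teichm\"uller-theoretic framework, delegating the analytic work to the already-established partial topological group property.
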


\begin{proof}
Lemma \ref{nearid} implies that there exist $\delta_0 >0$ and $\tau_0>0$
such that if $\Vert \log h' \Vert_* \leq \delta_0$ and
$\Vert \log f' \Vert_* \leq \tau_0$, then $\Vert \log (h\circ f)' \Vert_* \leq 1$.
We may choose $\tau_0$ so that $\tau_0 \leq 1$. Then, for any $u=\log h' \in {\rm BMO}_{\mathbb R}(\mathbb R)$
with $\Vert u \Vert_* \leq \delta_0$ and $f \in {\rm SQS}$ with $\Vert \log f'\Vert_* \leq \tau_0$,
we have
$$
\Vert P_f(u) \Vert_* \leq \Vert \log(h \circ f)'\Vert_*+\Vert \log f'\Vert_*\leq 2.
$$
For $w=u+iv \in {\rm BMO}(\mathbb R)$ with $\Vert w \Vert_* = \delta_0$, we apply this estimate to
$\Vert P_f(u) \Vert_*$ and $\Vert P_f(v) \Vert_*$; we obtain that
if $\Vert w \Vert_* = \delta_0$ and $\Vert \log f'\Vert_* \leq \tau_0$, then 
$\Vert P_f(w) \Vert_* \leq 4$. 
This completes the proof by setting $C_0=4/\delta_0$.
\end{proof}

This can be extended to the local uniform boundedness of the operator norm as follows. 

\begin{lemma}\label{general}
For any $f \in {\rm SQS}$,
the operator norm $\Vert P_f \Vert$ of the composition operator $P_f:{\rm BMO}(\mathbb R) \to {\rm BMO}(\mathbb R)$
depends only on the Carleson distance $d_c(f,\rm id)$.
\end{lemma}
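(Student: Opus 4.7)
The plan is to reduce to Proposition \ref{uniformPh} by writing $f$ as a finite composition of strongly quasisymmetric homeomorphisms each close to the identity, so that each factor falls inside the regime where the composition operator has uniformly bounded norm. The number of factors needed will depend only on $d_c(f,{\rm id})$, giving a bound of the form $\Vert P_f\Vert\leq C_0^{N(d_c(f,{\rm id}))}$.

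First I would use the fact that the Carleson distance $d_c$ on $T_b\cong{\rm SQS}$ is a right-invariant length metric coming from an infinitesimal Carleson-type norm on $\mathcal M(\mathbb U)$ (see \cite{WM-3}), together with the local real-analytic equivalence $L|_{\rm SQS}:{\rm SQS}\to{\rm BMO}_{\mathbb R}^*(\mathbb R)$ of Corollary \ref{real-analytic}, to produce a threshold $\delta>0$ with the following property: every $g\in{\rm SQS}$ with $d_c(g,{\rm id})\leq\delta$ satisfies $\Vert\log g'\Vert_*\leq\tau_0$, where $\tau_0$ is the constant supplied by Proposition \ref{uniformPh}. The local BMO-to-$d_c$ comparison at the origin of $T_b$, which is a consequence of the real-analytic diffeomorphism in Corollary \ref{real-analytic}, is the essential ingredient here.

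Next, given $f\in{\rm SQS}$ with $d_c(f,{\rm id})\leq R$, I would choose an integer $N=N(R,\delta)$ and intermediate points ${\rm id}=f_0,f_1,\ldots,f_N=f$ with $d_c(f_k,f_{k-1})\leq\delta$ for every $k$; the length-metric property of $d_c$ makes such a chain available along a near-geodesic path of length at most $R+1$. Setting $g_k=f_k\circ f_{k-1}^{-1}$, the right-invariance of $d_c$ gives $d_c(g_k,{\rm id})=d_c(f_k,f_{k-1})\leq\delta$, and hence $\Vert\log g_k'\Vert_*\leq\tau_0$ by the first step. Proposition \ref{uniformPh} then yields $\Vert P_{g_k}\Vert\leq C_0$ for every $k$.

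Finally, since $f=g_N\circ g_{N-1}\circ\cdots\circ g_1$, the relation $P_{g\circ h}=P_h\circ P_g$ gives $P_f=P_{g_1}\circ P_{g_2}\circ\cdots\circ P_{g_N}$, and therefore $\Vert P_f\Vert\leq C_0^{N}$, a bound depending only on $R=d_c(f,{\rm id})$. The main obstacle I anticipate is the first step: a clean justification that $d_c$ and the BMO norm of $\log g'$ are locally comparable near the identity, which relies on the precise definition of $d_c$ from \cite{WM-3} and on Corollary \ref{real-analytic}; once this local comparison and the right-invariance of $d_c$ are in hand, the chaining argument is routine.
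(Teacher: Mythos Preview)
Your proposal is correct and follows essentially the same route as the paper: choose a radius $r_0>0$ so that $d_c(g,{\rm id})\leq r_0$ forces $\Vert\log g'\Vert_*\leq\tau_0$, then chain along a near-geodesic from ${\rm id}$ to $f$ with steps of $d_c$-length at most $r_0$, use right-invariance of $d_c$ to bring each increment $f_k\circ f_{k-1}^{-1}$ near the identity, apply Proposition~\ref{uniformPh} to each factor, and compose to get $\Vert P_f\Vert\leq C_0^{n}$ with $n$ depending only on $d_c(f,{\rm id})$. The paper simply asserts the existence of $r_0$ without comment, whereas you explicitly ground it in Corollary~\ref{real-analytic}; otherwise the arguments are identical.
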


\begin{proof}
For the constant $\tau_0$ in Proposition \ref{uniformPh}, we choose a constant $r_0>0$ such that
if $f \in {\rm SQS}$ satisfies $d_c(f,{\rm id}) \leq r_0$ then $\Vert \log f' \Vert_* \leq \tau_0$.
Any element $f \in {\rm SQS}$ can be joined to $\rm id$ by a curve in ${\rm SQS}$
with its length arbitrary close to $d_c(f,{\rm id})$. We choose the minimal number of consecutive points
$$
{\rm id}=f_0, f_1, \ldots, f_n=f
$$
on the curve such that $d_c(f_i,f_{i-1}) < r_0$ for any $i=1,\ldots,n$. Then, the number $n$ is determined by
$d_c(f,\rm id)$, and the invariance of $d_c$ under the right translation implies that the composition
$f_i \circ f_{i-1}^{-1}$ satisfies 
$d_c(f_i \circ f_{i-1}^{-1},{\rm id}) <r_0$, and hence $\Vert \log (f_i \circ f_{i-1}^{-1})'\Vert_* \leq \tau_0$.
By decomposing $f$ into these $n$ mappings, we have
$$
P_f=P_{f_1 \circ f_0^{-1}} \circ P_{f_2 \circ f_1^{-1}} \circ \cdots \circ P_{f_n \circ f_{n-1}^{-1}}.
$$
Then, Proposition \ref{uniformPh} shows that $\Vert P_f \Vert \leq C_0^n$.
\end{proof}

\begin{remark}
It is known that $\Vert P_f \Vert$ can be estimated only by the constants $\alpha$ and $K$ for 
the strongly quasisymmetric condition (\ref{alphaK}) of $f \in {\rm SQS}$. See \cite[Example 2.3]{Got}.
\end{remark}

\begin{theorem}\label{b-b}
For any bounded subset $W \subset {\rm SQS}$, the image $L(W)$ is
also bounded in ${\rm BMO}_{\mathbb R}(\mathbb R)$. In more details, if $f \in {\rm SQS}$ is
within distance $r$ from $\rm id$ in the Carleson distance $d_c$, then $\Vert \log f' \Vert_*$
is bounded by a constant depending only on $r$. 
\end{theorem}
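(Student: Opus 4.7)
The plan is to reduce the statement to its quantitative form (the second sentence), since a bounded subset $W \subset {\rm SQS}$ with respect to $d_c$ lies in a ball $\{f : d_c(f,{\rm id}) \leq r\}$ for some $r$. So it suffices to estimate $\Vert \log f'\Vert_*$ in terms of $r=d_c(f,{\rm id})$.

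The main idea is to recycle the chain-of-compositions argument from Lemma~\ref{general}, but combine it with the chain rule for the logarithmic derivative rather than with composition operators alone. With $r_0, \tau_0$ as in the proof of Lemma~\ref{general}, pick consecutive points ${\rm id}=f_0, f_1, \ldots, f_n=f$ on a nearly geodesic curve in ${\rm SQS}$ with $d_c(f_i, f_{i-1}) < r_0$, where $n$ depends only on $r$; invariance of $d_c$ under right translation together with Proposition~\ref{topequiv} (or Lemma~\ref{general}'s hypothesis) guarantees $\Vert \log g_i' \Vert_* \leq \tau_0$ for $g_i := f_i \circ f_{i-1}^{-1}$. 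Writing $f = g_n \circ g_{n-1} \circ \cdots \circ g_1$ and iterating the identity $\log(\varphi \circ \psi)' = (\log\varphi') \circ \psi + \log\psi'$, the chain rule gives
\begin{equation*}
\log f' = \sum_{i=1}^n (\log g_i') \circ (g_{i-1} \circ \cdots \circ g_1) = \sum_{i=1}^n P_{h_i}(\log g_i'),
\end{equation*}
where $h_i = g_{i-1} \circ \cdots \circ g_1 = f_{i-1}$ (with $h_1 = {\rm id}$). Since each $h_i$ satisfies $d_c(h_i,{\rm id}) \leq r$, Lemma~\ref{general} supplies a bound $\Vert P_{h_i}\Vert \leq C(r)$ uniform in $i$. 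Taking the BMO norm and applying the triangle inequality then yields
\begin{equation*}
\Vert \log f' \Vert_* \leq \sum_{i=1}^n \Vert P_{h_i}\Vert \,\Vert \log g_i'\Vert_* \leq n\, C(r)\,\tau_0,
\end{equation*}
and the right-hand side depends only on $r$ because $n$ does.

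I do not expect a real obstacle here: all ingredients are in place, and the only subtlety is keeping track that both the number of pieces $n$ and the operator-norm bound $C(r)$ from Lemma~\ref{general} depend only on $r$, not on $f$ itself. The mild care point is ensuring the $h_i$ are admissible inputs to Lemma~\ref{general}, which follows directly from $d_c(h_i,{\rm id}) = d_c(f_{i-1},{\rm id}) \leq r$ by construction of the chain.
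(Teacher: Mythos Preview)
Your proof is correct and follows essentially the same approach as the paper: both pick a chain ${\rm id}=f_0,\ldots,f_n=f$ with $d_c(f_i,f_{i-1})<r_0$, expand $\log f'$ via the chain rule into a sum of terms $P_{f_{i-1}}(\log g_i')$ with $g_i=f_i\circ f_{i-1}^{-1}$, and bound each term. The only cosmetic difference is in how $\Vert P_{f_{i-1}}\Vert$ is estimated: the paper writes $P_{f_{i-1}}=P_{g_1}\circ\cdots\circ P_{g_{i-1}}$ and applies Proposition~\ref{uniformPh} to each factor directly, obtaining $\Vert P_{f_{i-1}}\Vert\leq C_0^{i-1}$ and the final bound $\sum_{i=1}^n C_0^{i-1}\tau_0$, whereas you invoke Lemma~\ref{general} as a black box to get $\Vert P_{f_{i-1}}\Vert\leq C(r)$ and the bound $nC(r)\tau_0$; since the proof of Lemma~\ref{general} is precisely that product estimate, the two are the same argument with slightly different packaging.
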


\begin{proof}
For $f \in {\rm SQS}$ with $d_c(f,{\rm id}) \leq r$, we decompose $f$ as in the proof of
Lemma \ref{general}. Then, by this lemma, we have
\begin{align*}
\Vert \log f'\Vert_* 
&=\Vert \log ((f_n \circ f_{n-1}^{-1}) \circ (f_{n-1} \circ f_{n-2}^{-1}) \circ \cdots \circ (f_{1} \circ f_{0}^{-1}) )'\Vert_*\\
& \leq \Vert P_{f_1 \circ f_0^{-1}} \circ P_{f_2 \circ f_1^{-1}} \circ \cdots \circ P_{f_{n-1} \circ f_{n-2}^{-1}}(\log (f_n \circ f_{n-1}^{-1})')\Vert_*\\
&\quad + \cdots +\Vert \log(f_{1} \circ f_{0}^{-1})' \Vert_*\\
& \leq C_0^{n-1} \tau_0+ C_0^{n-2} \tau_0 + \cdots +\tau_0.
\end{align*}
Since $n$ depends only on $r$, the statement is proved.
\end{proof}

\section{Arc-length and Riemann mapping parametrizations}

Any chord-arc curve $\Gamma$ admits two canonical parametrizations. One is arc-length parametrization and the other is
Riemann mapping parametrization, which is
given by the boundary extension of the Riemann mapping from $\mathbb U$ to the left domain bounded by $\Gamma$.
In this section, we consider the correspondence of these parametrizations.

Let ${\rm RM}^\circ={\rm RM} \cap {\rm CA}$. 
This is the subset of ${\rm BE}$ consisting of all chord-arc curves with Riemann mapping parametrizations.
Since ${\rm CA}$ is an open subset of ${\rm BE}$ by Theorem \ref{biholo},
${\rm RM}^\circ$ is an open subset of the complex-analytic submanifold ${\rm RM}$ of ${\rm BE}$.
Under the identification ${\rm RM} \cong T_b$, this corresponds to an open subset $T_c$ of $T_b$
investigated in \cite{WM-3}.
Every $\gamma \in {\rm CA}$ 
is decomposed uniquely into $\gamma=h \circ f$, where $h=\Phi(\gamma) \in {\rm RM}^\circ$ and
$f=\Pi(\gamma) \in {\rm SQS}$. Conversely, the parameter change of $h \in {\rm RM}^\circ$ by $f \in {\rm SQS}$
gives $\gamma=h \circ f \in {\rm CA}$. In this sense, ${\rm CA}$ is represented by
the product ${\rm SQS} \times {\rm RM}^\circ$, which is in fact induced by
the conformal welding coordinates $T_b \times T_b$ of ${\rm BE}$.

Similarly, ${\rm CA}$ is represented by the product ${\rm SQS} \times {\rm ICA}$ (see \cite[Theorem 5.3]{FHS}). 
Namely, every $\gamma \in {\rm CA}$ is decomposed uniquely into $\gamma=\gamma_0 \circ f$, and vice versa,
which can be understood as the parameter change of $\gamma_0 \in {\rm ICA}$ by $f \in {\rm SQS}$.
Under the map $J^{-1} \circ L$, this decomposition corresponds to the product structure
${\rm BMO}_{\mathbb R}^*(\mathbb R) \times i{\rm BMO}_{\mathbb R}(\mathbb R)^{\circ}$.

Having two product structures ${\rm SQS} \times {\rm RM}^\circ$ and 
${\rm SQS} \times {\rm ICA}$ on ${\rm CA}$, we compare the arc-length parametrizations ${\rm ICA}$ 
with the Riemann mapping para\-metri\-zations ${\rm RM}^\circ$. 
Each fiber of the projection $\Phi:{\rm CA} \to {\rm RM}^\circ$ consists of a family of normalized BMO embeddings
with the same image, 
and hence they have the same arc-length parametrization. 
This observation leads to the following.

\begin{proposition}\label{fibers}
There is a bijection between ${\rm ICA}$ and ${\rm RM}^\circ$ 
keeping the images of the corresponding embeddings the same. 
This bijection is nothing but $\Phi|_{\rm ICA}:{\rm ICA} \to {\rm RM}^\circ$.
\end{proposition}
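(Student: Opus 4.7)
The plan is to verify the bijectivity of $\Phi|_{\rm ICA}$ directly. By construction, $\Phi:{\rm CA}\to{\rm RM}^\circ$ sends each BMO embedding to the Riemann mapping parametrization of the same image, so $\Phi|_{\rm ICA}$ automatically preserves images. Thus the proposition reduces to showing that every normalized chord-arc curve admits exactly one constant-speed parametrization in ${\rm ICA}$.

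For injectivity, I would observe that elements $\gamma \in {\rm ICA}$ are exactly the BMO embeddings with $\log \gamma' \in i{\rm BMO}_{\mathbb R}(\mathbb R)$, i.e., $|\gamma'|$ is (a.e.) constant, together with the fixed normalization $\gamma(0)=0$, $\gamma(1)=1$, $\gamma(\infty)=\infty$. If $\gamma_1,\gamma_2\in{\rm ICA}$ satisfy $\Phi(\gamma_1)=\Phi(\gamma_2)$, then they parametrize the same curve $\Gamma$; the common constant $|\gamma_i'|$ is forced to equal the arc length of the subarc of $\Gamma$ from $0$ to $1$ in the orientation determined by $\gamma_i(\infty)=\infty$, and the constant-speed parametrization with these normalizations is unique. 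So $\gamma_1=\gamma_2$.

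For surjectivity, given $h\in{\rm RM}^\circ$ with image $\Gamma$, I would construct $\gamma\in{\rm ICA}$ with $\Phi(\gamma)=h$ by reparametrizing $h$ by arc length. Theorem \ref{Ainfty} gives that $|h'|$ is an $A_\infty$-weight on $\mathbb R$, hence
\[
\eta(x)=\frac{\int_0^x|h'(t)|\,dt}{\int_0^1|h'(t)|\,dt}
\]
belongs to ${\rm SQS}$. Set $f=\eta^{-1}\in{\rm SQS}$ and $\gamma=h\circ f$. The remark following Lemma \ref{composition} ensures $\gamma\in{\rm BE}$, and since $\gamma(\mathbb R)=h(\mathbb R)=\Gamma$ is chord-arc, $\gamma\in{\rm CA}$. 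A chain-rule computation using $f'(y)=\bigl(\int_0^1|h'|\bigr)/|h'(f(y))|$ gives $|\gamma'|\equiv \int_0^1|h'(t)|\,dt$, so $\log\gamma'\in i{\rm BMO}_{\mathbb R}(\mathbb R)$ modulo complex constants, and therefore $\gamma\in{\rm ICA}$. Finally, $\Phi(\gamma)=h$ since $\gamma$ and $h$ share the image $\Gamma$.

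The only mild subtlety is that $|\gamma'|$ equals the constant $\int_0^1|h'|$ rather than $1$, but this is consistent with $\gamma\in{\rm ICA}$: since ${\rm BMO}(\mathbb R)$ and its purely imaginary subspace are taken modulo complex constants, the real constant $\log\int_0^1|h'|$ is absorbed into the equivalence class. No genuine obstacle arises; the whole argument is a direct application of Theorem \ref{Ainfty} together with the compositional stability of BMO embeddings under strongly quasisymmetric reparametrizations.
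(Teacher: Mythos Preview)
Your proof is correct and follows essentially the same idea as the paper, which in fact does not give a formal proof of this proposition: it merely records, just before the statement, that each fiber of $\Phi:{\rm CA}\to{\rm RM}^\circ$ consists of all normalized BMO embeddings with the same image and therefore contains a unique arc-length parametrization, by the product structure ${\rm CA}\cong{\rm SQS}\times{\rm ICA}$ established earlier via Theorem~\ref{bijective}. Your argument simply unpacks this observation, proving injectivity by the uniqueness of a normalized constant-speed parametrization of a given curve, and surjectivity by explicitly constructing the arc-length reparametrization $\gamma=h\circ\eta^{-1}$ of a given $h\in{\rm RM}^\circ$ via Theorem~\ref{Ainfty} and the closure of BMO embeddings under precomposition with ${\rm SQS}$; your final remark about the additive constant is also handled correctly.
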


We consider the other projection $\Pi$ restricted to ${\rm ICA}$,
which has been studied with great interest in the literature.
For any $\gamma_0 \in {\rm ICA}$, $\Pi(\gamma_0) \in {\rm SQS}$ is defined by
the strongly quasisymmetric homeomorphism of $\mathbb R$ inducing
the parameter change from $\gamma_0$ to $\Phi(\gamma_0) \in {\rm RM}^\circ$. 
The following result was proved by Coifman and Meyer \cite[Theorem 1]{CM}
by operator theoretical arguments. See also \cite[Section 6]{Se} and \cite{Wu}.
In our formulation, this statement itself is already self-evident.

\begin{theorem}\label{CM}
The map $\Pi|_{\rm ICA}:{\rm ICA} \to {\rm SQS}$ is real-analytic. Hence,
$$
\lambda=L \circ \Pi \circ L^{-1}|_{i{\rm BMO}_{\mathbb R}(\mathbb R)^\circ}:i{\rm BMO}_{\mathbb R}(\mathbb R)^\circ \to 
{\rm BMO}_{\mathbb R}^*(\mathbb R)
$$ 
is also real-analytic.
\end{theorem}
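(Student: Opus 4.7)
The plan is to read off both statements directly from the Bers coordinate description of $\Pi$ together with Theorem \ref{biholo} and Corollary \ref{real-analytic}; as the author hints, once the framework is set up, no new analysis is required.

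First I would recall that, in the Bers coordinates ${\rm BE}\cong T_b(\mathbb U)\times T_b(\mathbb L)$, the projection $\Pi$ is given by $([\mu_1],[\mu_2])\mapsto ([\mu_1],[\mu_1])$. This is the composition of the holomorphic projection onto the first coordinate with the diagonal embedding $T_b\hookrightarrow T_b(\mathbb U)\times T_b(\mathbb L)$ whose image is the real-analytic submanifold ${\rm SQS}$. In particular, $\Pi:{\rm BE}\to{\rm SQS}$ is globally real-analytic (even better, it is the holomorphic projection followed by inclusion of a totally real submanifold).

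Next I would invoke Theorem \ref{biholo} to the effect that ${\rm CA}$ is an open subset of ${\rm BE}$ and $L:{\rm CA}\to{\rm BMO}(\mathbb R)$ is a biholomorphic homeomorphism onto its image. Since ${\rm ICA}=L^{-1}(i{\rm BMO}_{\mathbb R}(\mathbb R)^\circ)$ and $i{\rm BMO}_{\mathbb R}(\mathbb R)^\circ$ is a (totally) real-analytic submanifold of the complex Banach manifold $L({\rm CA})$, it follows that ${\rm ICA}$ is a real-analytic submanifold of ${\rm CA}$, and hence of ${\rm BE}$. Then $\Pi|_{\rm ICA}:{\rm ICA}\to{\rm SQS}$, being the restriction of the real-analytic map $\Pi$ to a real-analytic submanifold, is real-analytic.

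Finally, for the statement about $\lambda$, I would combine the first part with Corollary \ref{real-analytic}: by (2) the map $L|_{\rm ICA}$ is a real-analytic homeomorphism $\rm ICA\to i{\rm BMO}_{\mathbb R}(\mathbb R)^\circ$ whose inverse is real-analytic, and by (1) the map $L|_{\rm SQS}:{\rm SQS}\to{\rm BMO}_{\mathbb R}^*(\mathbb R)$ is real-analytic. Writing
\[
\lambda=(L|_{\rm SQS})\circ\Pi|_{\rm ICA}\circ (L|_{\rm ICA})^{-1},
\]
we see that $\lambda$ is a composition of real-analytic maps of real Banach manifolds, and is therefore real-analytic. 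There is no real obstacle here: all the substance has been absorbed into Theorem \ref{biholo} (the biholomorphy of $L$ on $\rm CA$) and into the description of $\Pi$ via the Bers coordinates, which is what allows us to bypass the harmonic-analytic arguments used originally in \cite{CM}.
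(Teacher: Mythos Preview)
Your proposal is correct and follows essentially the same approach as the paper: invoke Corollary \ref{real-analytic} (equivalently Theorem \ref{biholo}) to see that ${\rm ICA}$ is a real-analytic submanifold of ${\rm CA}$, restrict the real-analytic projection $\Pi$ to it, and then conjugate by the biholomorphic map $L$ to conclude that $\lambda$ is real-analytic. Your write-up is slightly more detailed (you spell out why $\Pi$ is real-analytic in Bers coordinates and decompose $\lambda$ explicitly), but the argument is the same.
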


\begin{proof}
By Corollary \ref{real-analytic}, ${\rm ICA}$ is a real-analytic submanifold of ${\rm CA}$.
Hence, the restriction $\Pi|_{\rm ICA}$ of the projection $\Pi:{\rm CA} \to {\rm SQS}$ is real-analytic.
Since $L$ is biholomorphic by Theorem \ref{biholo}, the conjugate map $\lambda$ is real-analytic.
\end{proof}

However, what is essential in properties of $\Pi|_{\rm ICA}$ is that it is injective and
the inverse map is also real-analytic. See \cite[Theorem 5]{SeB}.
In fact, the original arguments for Theorem \ref{CM} and their generalization
were obtained by investigating this inverse map.

\begin{theorem}\label{inverse}
The map $\Pi|_{\rm ICA}$ is injective, its image is
an open subset of ${\rm SQS}$, and the inverse $(\Pi|_{\rm ICA})^{-1}$ is real-analytic.
Equivalently, $\lambda=L \circ \Pi \circ L^{-1}|_{i{\rm BMO}_{\mathbb R}(\mathbb R)^\circ}$ is injective,
its image is an open subset of ${\rm BMO}_{\mathbb R}^*(\mathbb R)$, and the inverse $\lambda^{-1}$ is
real-analytic.
\end{theorem}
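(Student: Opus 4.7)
The plan is to establish injectivity first, then deduce openness of the image and real-analyticity of the inverse simultaneously from the real-analytic inverse function theorem, working throughout with $\lambda$ since $L$ is a biholomorphism by Theorem~\ref{biholo} and Corollary~\ref{real-analytic}.

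For injectivity, suppose $\gamma_0, \tilde\gamma_0 \in {\rm ICA}$ satisfy $\Pi(\gamma_0) = \Pi(\tilde\gamma_0) = f \in {\rm SQS}$. Decomposing $\gamma_0 = h \circ f$ and $\tilde\gamma_0 = \tilde h \circ f$ with $h, \tilde h \in {\rm RM}^\circ$, the arc-length conditions $|\gamma_0'|=|\tilde\gamma_0'|=1$ force $|h'|=|\tilde h'|$ almost everywhere on $\mathbb R$. Let $H, \tilde H$ denote the conformal extensions of $h, \tilde h$ to $\mathbb U$. By Theorem~\ref{Guo11}, $\log H', \log \tilde H' \in {\rm BMOA}(\mathbb U)$, so both are Poisson extensions of their boundary values by Theorem~\ref{131}. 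Their real parts on $\mathbb U$, being Poisson extensions of the agreeing functions $\log|h'|=\log|\tilde h'|$, coincide; thus $\log H' - \log \tilde H'$ is a holomorphic function on $\mathbb U$ with zero real part, hence a purely imaginary constant. Integration gives $H = c\tilde H + d$ with $|c|=1$, and the normalizations $H(0)=\tilde H(0)=0$, $H(1)=\tilde H(1)=1$ force $c=1$ and $d=0$, yielding $\gamma_0=\tilde\gamma_0$.

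For real-analyticity of the inverse, I would apply the real-analytic inverse function theorem in Banach manifolds: since $\lambda$ is real-analytic by Theorem~\ref{CM}, it suffices to show that its Fréchet derivative $d\lambda(iv_0)$ is a topological isomorphism between the relevant tangent spaces at every $iv_0 \in i{\rm BMO}_{\mathbb R}(\mathbb R)^\circ$. The key computation is at the origin $iv_0=0$ (where $\gamma_0={\rm id}$): linearizing the defining relation $iv = \log h' \circ f + u$ to first order and using the holomorphy of $\log H'$ on $\mathbb U$ to express its boundary imaginary part as the Hilbert transform $\mathscr H$ of its boundary real part, one obtains $d\lambda(0)(i\dot v) = \mathscr H(\dot v)$. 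Since $\mathscr H$ is a topological automorphism of ${\rm BMO}_{\mathbb R}(\mathbb R)$ modulo constants (its square is $-I$), this is an isomorphism. For a general base point $iv_0$, the same linearization applied to the explicit inversion formula
\[
\lambda^{-1}(u) = -i\, P_{\gamma_u} \circ \mathscr H \circ P_{\gamma_u^{-1}}(u),
\]
derived by solving $iv = \log h'\circ f + u$ under the arc-length constraint $\mathrm{Re}(iv)=0$ together with the BMOA constraint on $\log H'$, yields an isomorphism as the composition of the bounded invertible operators $P_{\gamma_u^{\pm1}}$ of Theorem~\ref{pullback} with $\mathscr H$. Openness of the image in ${\rm SQS}$ is then automatic from the local diffeomorphism property.

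The main obstacle is handling real-analyticity and invertibility cleanly at non-origin base points, because the nonlinear dependence of $P_{\gamma_u^{\pm1}}$ on $u$ is not manifestly analytic at the operator level. The remedy, mirroring the argument used for $\Lambda_{iv}$ in the proof of Theorem~\ref{biholo}, is to verify G\^ateaux holomorphy together with local uniform boundedness (guaranteed by Lemma~\ref{general}) of the composite nonlinear map $u \mapsto -iP_{\gamma_u}\mathscr HP_{\gamma_u^{-1}}(u)$ directly into ${\rm BMO}(\mathbb R)$, then invoke the Hartogs-type theorem for Banach spaces to conclude real-analyticity. A secondary subtlety is that the image of $\lambda$ need not fill all of ${\rm BMO}_{\mathbb R}^*(\mathbb R)$, since the reconstructed $|h'| = \exp(-P_{\gamma_u^{-1}}(u))$ must itself be an $A_\infty$-weight for Lemma~\ref{JK4.2} to produce a chord-arc image; the true image is an open subset cut out by this further $A_\infty$-condition, open by continuous dependence of $-P_{\gamma_u^{-1}}(u)$ on $u$ combined with the openness of ${\rm BMO}_{\mathbb R}^*(\mathbb R)$ (Proposition~\ref{convex}).
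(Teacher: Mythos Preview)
Your injectivity argument is correct and essentially equivalent to the paper's: where you use the Poisson representation to conclude that $|h'|=|\tilde h'|$ forces $h=\tilde h$, the paper writes out the Hilbert-transform identity $v=-P_f\circ T\circ P_f^{-1}(\log f')$ explicitly, exhibiting $v$ as a function of $f$. Both rest on the same BMOA fact, and both lead to the same closed formula $\lambda^{-1}(u)=-iP_{\gamma_u}\,T\,P_{\gamma_u}^{-1}(u)$. Your computation of $d\lambda(0)=-i\mathscr H$ likewise agrees with the paper.

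For general base points, your outline has the right shape but does not close the main gap---and the paper does not close it either: it explicitly says it will only prove injectivity and invertibility of the derivative at the origin, deferring the full real-analyticity to Coifman--Meyer \cite{CM0} and Coifman--Semmes \cite{CS}. The issue is this: to show $u\mapsto -iP_{\gamma_u}\mathscr H P_{\gamma_u}^{-1}(u)$ is real-analytic (or, equivalently, that $d\lambda(iv_0)$ is an isomorphism for every $iv_0$), you must complexify and verify holomorphy. Your appeal to ``G\^ateaux holomorphy plus local boundedness'' is the correct template, but you do not name the complexification: for complex $w=\log\gamma'\in L({\rm CA})$ the conjugated Hilbert transform $P_{\gamma}\,T\,P_{\gamma}^{-1}$ becomes the Cauchy singular integral along the chord-arc curve $\gamma$, and it is for \emph{that} operator-valued function that weak holomorphy can actually be checked. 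Without this identification the G\^ateaux step is not grounded. It is worth stressing that this analyticity is not a formality: the paper later shows (Corollary~\ref{last}) that $P_f$ itself fails to be continuous in operator norm as $f\to{\rm id}$, so the real-analyticity of the conjugate $P_f\,T\,P_f^{-1}$ reflects a genuine cancellation that only the Cauchy-integral interpretation makes visible.
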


We will only prove the injectivity of $\lambda$ and explain the existence of the derivative $d_0(\lambda^{-1})$ at the origin
which is invertible. Then, we in particular see that some neighborhoods at the origin of $\rm ICA$ and ${\rm SQS}$ 
correspond homeomorphically by $\Pi|_{\rm ICA}$. This claim will be used in the next section.

Every $\gamma_0 \in {\rm ICA}$ is decomposed uniquely into $\gamma_0=h \circ f$ for $h \in {\rm RM}^\circ$ and $f \in {\rm SQS}$.
Taking the logarithm of the derivative of this equation, we have
\begin{equation*}\label{logder}
\log \gamma_0'= \log h' \circ f+\log f'.
\end{equation*}
Since $\log \gamma_0'=iv$ is purely imaginary and $\log f'$ is real, the real and the imaginary parts of this equation
become
\begin{equation}\label{ReIm}
0={\rm Re} \log h' \circ f +\log f'\quad {\rm and} \quad v={\rm Im} \log h' \circ f.
\end{equation}
Moreover, since $\log h'$ is the boundary extension of the holomorphic function $\log H'$ for the Riemann mapping $H$ on $\mathbb U$,
${\rm Re}\log h'$ and ${\rm Im}\log h'$ are related by the Hilbert transformation $T$ on $\mathbb R$:
\begin{equation}\label{Hilbert}
{\rm Im}\log h'=T({\rm Re}\log h').
\end{equation}
Then, the combination of (\ref{ReIm}) and (\ref{Hilbert}) yields that
\begin{equation}\label{injective}
-P_f\circ T \circ P_f^{-1}(\log f')=v.
\end{equation}
This shows that $v$ is determined by $f$ and thus $\lambda:\log \gamma_0' \mapsto \log f'$ is injective.
 
Let $u=\log f' \in {\rm BMO}_{\mathbb R}^*(\mathbb R)$. Then, equation (\ref{injective}) also gives
$$
\lambda^{-1}(u)=-iP_f\circ T \circ P_f^{-1}(u).
$$
Here, we assume a fact that the conjugate $P_f\circ T \circ P_f^{-1}$ by the composition operator $P_f$
tends to the Hilbert transformation $T$ in the operator norm as $f \to \rm id$ in ${\rm SQS} \cong T_b$
(in spite of the fact that $P_f \nrightarrow I$ in Corollary \ref{last}).
This was proved in \cite[Theorem 1]{CM0} in a more general form. In fact, $P_f\circ T \circ P_f^{-1}$ was shown to be
real-analytic as an operator valued function, from which Theorem \ref{inverse} follows.
A conceptional explanation for this fact is in \cite[p.86]{CS}.
Indeed, $P_f\circ T \circ P_f^{-1}$ can be extended from the function on $u=\log f' \in {\rm BMO}_{\mathbb R}^*(\mathbb R)$
to that on $w=\log \gamma' \in L({\rm CA})$ as the Cauchy integral operator on a chord-arc curve. Then, local boundedness and
weak holomorphy of $P_\gamma\circ T \circ P_\gamma^{-1}$ imply that this function is holomorphic.
The local boundedness of $P_f$ is shown in Proposition \ref{uniformPh}.
In addition, we can find a proof for an estimate $\Vert P_f\circ T \circ P_f^{-1} -T \Vert \lesssim \Vert u \Vert_*$
when $\Vert u \Vert_*$ is small
in \cite[p.206]{Se1}.

By $P_f\circ T \circ P_f^{-1} \to T$ as $f \to \rm id$, we obtain that $\lambda^{-1}$ is differentiable at the origin and
$d_0(\lambda^{-1})=-iT$. Since the Hilbert transformation $T$ is an isomorphism of ${\rm BMO}_{\mathbb R}(\mathbb R)$
onto itself, we see that $d_0(\lambda^{-1})$ is invertible and in fact $d_0\lambda=-iT$. By the inverse function theorem,
there are some neighborhoods at the origin of $i{\rm BMO}_{\mathbb R}(\mathbb R)^\circ$ and ${\rm BMO}_{\mathbb R}^*(\mathbb R)$
that are real-analytically equivalent under $\lambda$.

\section{Discontinuity of the correspondence to Riemann mappings}

In this section, we prove that the bijection $\Phi|_{\rm ICA}:{\rm ICA} \to {\rm RM}^\circ$ is not continuous. This means that
the correspondence of Riemann mappings to the arc-length parametrizations of chord-arc curves
is not continuous. To this end,
we first consider a problem on the topological group structure of $T_b \cong {\rm SQS}$.

It is well-known that the universal Teichm\"uller space $T$,
which can be identified with the group ${\rm QS}$ of all 
normalized quasisymmetric homeomorphisms of $\mathbb R$ onto itself, 
does not constitute a topological group. An example of discontinuity of the map
$([\mu_1],[\mu_2]) \mapsto [\mu_2] \ast [\mu_1]^{-1}$ is given in \cite{Le}.
We adjust this example slightly to 
see that $T_b$ is not a topological group either.
On the contrary, Proposition \ref{nearid} implies that $T_b$ is a partial topological group.
In this case, \cite[Lemma 1.2]{GS} proved that the subgroup consisting of all elements $[\nu] \in T_b$
such that $[\nu] \circ [\mu_n] \circ [\nu]^{-1} \to [0]$ for any sequence $[\mu_n] \in T_b$ converging to $[0]$
is a closed topological group, which is called the characteristic topological subgroup of $T_b$.
We denote this subgroup by ${\rm char}(T_b)$.

\begin{proposition}\label{continuity}
The map $\Phi:{\rm BE} \to {\rm RM}$ is not continuous at $([0],[\nu]) \in {\rm BE}$
with some $[\nu] \neq [0]$ in 
any small neighborhood of the origin of $T_b$, but continuous at a point
$([\mu_1],[\mu_2]) \in {\rm BE}$ if $[\mu_2] \ast [\mu_1]^{-1}$ belongs to ${\rm char}(T_b)$.
\end{proposition}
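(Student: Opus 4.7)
The plan is to work directly with the Bers-coordinate formula $\Phi([\mu_1],[\mu_2])=([0],[\mu_2]\ast[\mu_1]^{-1})$, so that continuity of $\Phi$ becomes a continuity question for the map $([\alpha],[\beta])\mapsto[\beta]\ast[\alpha]^{-1}$ on $T_b\times T_b$. Both halves of the proposition then reduce to properties of the group operation on $T_b$, controlled by Proposition \ref{nearid} (joint continuity near the origin) and Lemma \ref{auto} (each individual right translation $R_{[\rho]}$ is a biholomorphic automorphism, hence a homeomorphism).

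For the continuity half, I assume $[\rho]:=[\mu_2]\ast[\mu_1]^{-1}\in{\rm char}(T_b)$ and take $([\alpha_n],[\beta_n])\to([\mu_1],[\mu_2])$. Setting $[\gamma_n]=[\alpha_n]\ast[\mu_1]^{-1}$ and $[\delta_n]=[\beta_n]\ast[\mu_2]^{-1}$, continuity of $R_{[\mu_1]^{-1}}$ and $R_{[\mu_2]^{-1}}$ forces $[\gamma_n],[\delta_n]\to[0]$; Proposition \ref{nearid} also gives $[\gamma_n]^{-1}\to[0]$. A direct rearrangement yields
\[
[\beta_n]\ast[\alpha_n]^{-1}
=[\delta_n]\ast[\rho]\ast[\gamma_n]^{-1}
=[\delta_n]\ast\bigl([\rho]\ast[\gamma_n]^{-1}\ast[\rho]^{-1}\bigr)\ast[\rho].
\]
The parenthesized factor tends to $[0]$ by the defining property of ${\rm char}(T_b)$; Proposition \ref{nearid} promotes this to $[\delta_n]\ast\bigl([\rho]\ast[\gamma_n]^{-1}\ast[\rho]^{-1}\bigr)\to[0]$, and applying the continuous map $R_{[\rho]}$ produces the required limit $[\rho]=\Phi([\mu_1],[\mu_2])$.

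For the discontinuity half, I plan to adapt the example of Lehto \cite{Le} already invoked just before the proposition. The aim is to produce, inside any prescribed neighborhood of the origin of $T_b$, a class $[\nu]\neq[0]$ together with a sequence $[\mu_n]\to[0]$ for which $[\nu]\ast[\mu_n]^{-1}\nrightarrow[\nu]$; equivalently, using the identity $[\nu]\ast[\mu_n]^{-1}=\bigl([\nu]\ast[\mu_n]^{-1}\ast[\nu]^{-1}\bigr)\ast[\nu]$ and the continuity of $R_{[\nu]}$, the task is to find $[\nu]$ arbitrarily close to $[0]$ that is \emph{not} in ${\rm char}(T_b)$. Once such data are in place, taking $[\alpha_n]=[\mu_n]$ and $[\beta_n]=[\nu]$ (constant) gives $([\alpha_n],[\beta_n])\to([0],[\nu])$ while $\Phi([\alpha_n],[\beta_n])=[\nu]\ast[\mu_n]^{-1}$ fails to converge to $\Phi([0],[\nu])=[\nu]$.

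The main obstacle is precisely this last step. Lehto's construction lives originally in the universal Teichm\"uller space $T$, and for our purposes the witnessing Beltrami coefficients must be chosen inside $\mathcal M(\mathbb U)$, i.e.\ with the associated $\lambda_\mu$ a Carleson measure; moreover their supports and amplitudes must be rescaled so that $[\nu]$ sits in an arbitrarily small neighborhood of the origin in $T_b$ while the quantitative failure of conjugation by $[\nu]$ at $[0]$ survives. All other steps are routine manipulations based on Proposition \ref{nearid} and Lemma \ref{auto}; the genuinely delicate work is this transplant of Lehto's example into the BMO Teichm\"uller setting with controlled Carleson norm.
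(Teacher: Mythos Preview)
Your treatment of the continuity half is correct and coincides with the paper's: both parametrize a neighborhood of $([\mu_1],[\mu_2])$ via the right translations $R_{[\mu_i]}$ (Lemma \ref{auto}), rewrite $[\beta_n]\ast[\alpha_n]^{-1}$ so that a conjugation by $[\rho]=[\mu_2]\ast[\mu_1]^{-1}$ appears, and then invoke the defining property of ${\rm char}(T_b)$ together with Proposition \ref{nearid}.

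For the discontinuity half you have correctly reduced the problem to exhibiting, arbitrarily close to the origin, some $[\nu]\notin{\rm char}(T_b)$, but you leave this as an ``obstacle'' and frame it as a Carleson-measure problem for Beltrami coefficients in $\mathcal M(\mathbb U)$. This is where your plan diverges from the paper, and the divergence matters. The paper bypasses Beltrami coefficients entirely by working in the model $T_b\cong{\rm SQS}\cong{\rm BMO}_{\mathbb R}^*(\mathbb R)$ of (\ref{trinity}): it writes down explicit piecewise linear homeomorphisms $f_k,\ell_n:\mathbb R\to\mathbb R$ for which $\log f_k'$ and $\log\ell_n'$ are step functions supported on a single bounded interval with jumps of size $\log(1+\tfrac1k)$ and $\log(1+\tfrac1n)$. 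Their BMO norms, and hence their distance from the origin in $T_b$, are read off immediately, so $[\nu]=[\nu_k]$ can be placed in any prescribed neighborhood of $[0]$ by taking $k$ large. The non-convergence $[\nu]\ast[\varepsilon_n]^{-1}\nrightarrow[\nu]$ is then checked by a direct computation of $\log(f_k\circ\ell_n^{-1}\circ f_k^{-1})'$, whose BMO norm stays $\geq\tfrac12\log(1+\tfrac1k)$ for all $n$; alternatively, since these are precisely maps of Lehto's type, one may cite his non-convergence in $T$ and use that the $T_b$-topology is finer. No Carleson estimates for $\lambda_\mu$ are needed at any point, and the ``delicate transplant'' you anticipate dissolves once you change from $\mathcal M(\mathbb U)$ to the ${\rm BMO}_{\mathbb R}^*(\mathbb R)$ coordinate.
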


\begin{proof}
We modify the example in \cite[Theorem III.3.3]{Le}.
For $k>0$,
let $f_k: \mathbb R \to \mathbb R$ be a normalized quasisymmetric homeomorphism defined by
$$
f_k(x)=
\left\{
\begin{array}{ll}
x & (0 \leq x)\\
\frac{k}{k+1}x & (-\frac{k+1}{k} < x <0)\\
x+\frac{1}{k} & (x \leq -\frac{k+1}{k})\quad.
\end{array}
\right.
$$
Since $\Vert \log f_k' \Vert_* \to 0$ as $k \to \infty$, we have $f_k \in {\rm SQS}$ if $k>0$ is sufficiently
large. Under the identification $T_b \cong {\rm SQS}$, we take $[\nu_k] \in T_b$ that corresponds to
$f_k$. Since $[\nu_k] \to [0]$ in $T_b$ as $k \to \infty$, we can choose and fix some $k>0$ 
so that $[\nu]=[\nu_k] \neq [0]$ is arbitrarily close to $[0]$. Set also $f=f_k$. 

Next, for every $n \in \mathbb N$, let
$\ell_n: \mathbb R \to \mathbb R$ be a normalized quasisymmetric homeomorphism defined by
$$
\ell_n(x)=
\left\{
\begin{array}{ll}
x & (x \geq 0)\\
\frac{n}{n+1}x & (x <0)
\quad.
\end{array}
\right.
$$
Similarly to the above, since $\Vert \log \ell_n' \Vert_* \to 0$ as $n \to \infty$, 
the sequence $[\varepsilon_n] \in T_b$ corresponding to $\ell_n$ tends to $[0]$ as $n \to \infty$.
Then, $([\varepsilon_n],[\nu])$ converges to $([0],[\nu])$.
On the contrary, we see that $\Phi([\varepsilon_n],[\nu])=([0],[\nu] \ast [\varepsilon_n]^{-1})$ does not converge to
$\Phi([0],[\nu])=([0],[\nu])$. Indeed, a simple computation yields that
$$
\log (f \circ \ell_n^{-1} \circ f^{-1})'(x)=
\left\{
\begin{array}{ll}
\log \left(1+\frac{1}{n} \right) & (-\frac{n}{n+1} \leq x<0)\\
\log \left(1+\frac{1}{n} \right)+\log \left(1+\frac{1}{k} \right) & (-1 \leq x <-\frac{n}{n+1})
\quad ,
\end{array}
\right.
$$
and hence
$$
\Vert \log(f \circ \ell_n^{-1} \circ f^{-1})'\Vert_* \geq \frac{1}{2}\log\left(1+\frac{1}{k} \right).
$$
This implies that $f \circ \ell_n^{-1}$ does not converge to $f$ as $n \to \infty$ in ${\rm SQS}$.
Alternatively,
the proof of \cite[Theorem III.3.3]{Le} asserts that 
$[\nu] \ast [\varepsilon_n]^{-1}$ does not converge to $[\nu]$ in the universal Teichm\"uller space $T$. 
Since the topology of $T_b$ is stronger than
that of $T$, $[\nu] \ast [\varepsilon_n]^{-1}$ does not converge to $[\nu]$ in $T_b$ either.
This proves that $\Phi$ is not continuous on any small neighborhood of the origin
of ${\rm BE}$.

For the continuity at $([\mu_1],[\mu_2])$ when $[\mu_2] \ast [\mu_1]^{-1}$ belongs to ${\rm char}(T_b)$,
we take any sequences $[\varepsilon_n]$ and $[\varepsilon'_n]$ converging to $[0]$, and
show that $\Phi([\varepsilon_n] \ast [\mu_1],[\varepsilon'_n] \ast[\mu_2]) \to \Phi([\mu_1],[\mu_2])$
as $n \to \infty$. This is equivalent to showing that
$$
[\varepsilon'_n] \ast ([\mu_2] \ast [\mu_1]^{-1}) \ast [\varepsilon_n]^{-1}\ast([\mu_2] \ast [\mu_1]^{-1})^{-1} \to [0].
$$
By the definition of ${\rm char}(T_b)$, this is satisfied.
\end{proof}

\begin{remark}
The VMO Teichm\"uller space $T_v$ can be defined as the set of all normalized strongly symmetric homeomorphisms of
the unit circle onto itself, and this is a closed subspace of $T_b$ (see \cite{SWei}).
The subset $T_c$ of $T_b$ consisting of all elements corresponding to chord-arc curves contains $T_v$ (see \cite{WM-3}).
It was shown in \cite{Wei} that all strongly symmetric homeomorphisms on the unit circle
constitute a topological group, and
from its proof, we see that $T_v$ is contained in ${\rm char}(T_b)$.
We expect that ${\rm char}(T_b)=T_v$ but it seems that this is not known yet.
\end{remark}

Proposition \ref{nearid} implies that $\Phi$ is continuous at the origin of ${\rm BE}$, and 
so is $\Phi|_{\rm ICA}:{\rm ICA} \to {\rm RM}^\circ$. We also consider the continuity of the inverse map
$(\Phi|_{\rm ICA})^{-1}$ at the origin.
This follows from the continuity of $J^{-1}$ at the origin.

\begin{lemma}\label{origin}
$J^{-1}:L({\rm CA}) \to {\rm BMO}_{\mathbb R}^*(\mathbb R) \times i{\rm BMO}_{\mathbb R}(\mathbb R)^\circ$
is continuous at $0$. As a consequence,
$(\Phi|_{\rm ICA})^{-1}:{\rm RM}^\circ \to {\rm ICA}$ is continuous at $([0],[0])$.
\end{lemma}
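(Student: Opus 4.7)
The plan is to first establish continuity of $J^{-1}$ at $0$, and then deduce continuity of $(\Phi|_{\rm ICA})^{-1}$ at $([0],[0])$ by exploiting the decomposition ${\rm CA} \cong {\rm SQS} \times {\rm ICA}$ recalled at the start of Section 7.

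For the first assertion, I would take any sequence $w_n \in L({\rm CA})$ with $w_n \to 0$ and split it as $w_n = u_n + i v_n'$, where $u_n = {\rm Re}\, w_n \to 0$ in ${\rm BMO}_{\mathbb R}^*(\mathbb R)$ and $v_n' = {\rm Im}\, w_n \to 0$ in ${\rm BMO}_{\mathbb R}(\mathbb R)$. By the formula computed in Proposition \ref{arclength}, $J^{-1}(w_n) = (u_n, i v_n)$ with $v_n = P_{\gamma_{u_n}^{-1}}(v_n') = v_n' \circ \gamma_{u_n}^{-1}$, so the whole task reduces to showing $\|v_n\|_* \to 0$. The strategy is to apply Proposition \ref{uniformPh} to the composition operator $P_{\gamma_{u_n}^{-1}}$ once the hypothesis on $\gamma_{u_n}^{-1}$ is secured.

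To secure that hypothesis, I would first invoke Proposition \ref{topequiv} to translate $u_n \to 0$ in ${\rm BMO}_{\mathbb R}^*(\mathbb R)$ into $\gamma_{u_n} \to {\rm id}$ in ${\rm SQS} \cong T_b$, and then apply the inversion part of Proposition \ref{nearid} to obtain $\gamma_{u_n}^{-1} \to {\rm id}$, equivalently $\|\log (\gamma_{u_n}^{-1})'\|_* \to 0$. For $n$ large enough that this norm lies below the constant $\tau_0$ in Proposition \ref{uniformPh}, we have the uniform operator bound $\|P_{\gamma_{u_n}^{-1}}\| \leq C_0$, and therefore
\[
\|v_n\|_* \leq C_0 \|v_n'\|_* \longrightarrow 0,
\]
which proves that $J^{-1}(w_n) \to (0,0)$. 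The main subtlety here is precisely the chain of continuities needed to put $\gamma_{u_n}^{-1}$ into the regime where the uniform operator-norm bound in Proposition \ref{uniformPh} applies; once that bound is available, the estimate is immediate.

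For the consequence on $(\Phi|_{\rm ICA})^{-1}$, take any sequence $h_n \to {\rm id}$ in ${\rm RM}^\circ$ and use the product structure ${\rm CA} \cong {\rm SQS} \times {\rm ICA}$ to write $h_n = \gamma_n \circ f_n$ uniquely with $\gamma_n \in {\rm ICA}$ and $f_n \in {\rm SQS}$; by Proposition \ref{fibers} this is exactly the identification $(\Phi|_{\rm ICA})^{-1}(h_n) = \gamma_n$. Taking logarithmic derivatives and using Proposition \ref{arclength}, we get $L(h_n) = J(\log f_n', L(\gamma_n))$ with $L(\gamma_n) \in i{\rm BMO}_{\mathbb R}(\mathbb R)^\circ$. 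Since $L$ is a homeomorphism on ${\rm CA}$ by Theorem \ref{biholo}, $h_n \to {\rm id}$ yields $L(h_n) \to 0$. The continuity of $J^{-1}$ at $0$ just established then forces $L(\gamma_n) \to 0$ in $i{\rm BMO}_{\mathbb R}(\mathbb R)^\circ$, and one final application of $L^{-1}$ (again via Theorem \ref{biholo}) gives $\gamma_n \to {\rm id}$ in ${\rm ICA}$, completing the proof.
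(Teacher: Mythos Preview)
Your proof is correct and follows essentially the same route as the paper. The only cosmetic difference is in the first part: the paper expresses $Q_u^{-1}$ as $L \circ \widetilde R_{[\nu]}^{-1} \circ L^{-1}$ via Proposition \ref{Lequation} and invokes Proposition \ref{nearid} directly on the Teichm\"uller side, whereas you stay on the BMO side and use Proposition \ref{uniformPh} (itself a consequence of Proposition \ref{nearid}) to get the uniform operator bound on $P_{\gamma_{u_n}^{-1}}$. For the second part, your argument is exactly the paper's formula $(\Phi|_{\rm ICA})^{-1}=L^{-1}\circ J \circ p \circ J^{-1}\circ L$ spelled out with sequences.
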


\begin{proof}
Since $J^{-1}(u+iv)=(u,Q_u^{-1}(u+iv))$ and $Q_{u}^{-1}=L \circ \widetilde R_{[\nu]}^{-1} \circ L^{-1}$
with the correspondence between $u \in {\rm BMO}_{\mathbb R}^*(\mathbb R)$ and $[\nu] \in T_b$,
the continuity of $J^{-1}$ at $0$ follows from the continuity of $R_{[\nu]}^{-1}([\mu])=[\mu] \ast [\nu]^{-1}$
at $([0],[0])$ by Proposition \ref{nearid}. Since
$$
(\Phi|_{\rm ICA})^{-1}=L^{-1}\circ J \circ p \circ J^{-1}\circ L
$$
where $p:{\rm BMO}_{\mathbb R}^*(\mathbb R) \times i{\rm BMO}_{\mathbb R}(\mathbb R)^\circ \to i{\rm BMO}_{\mathbb R}(\mathbb R)^\circ$
is the projection to the second factor, this is continuous at $([0],[0])$.
\end{proof}

We are ready to prove the main result, 
which solves a conjecture of
Katznelson, Nag and Sullivan \cite[p.303]{KNS}
by showing that 
the dependence of the Riemann mapping on the arc-length parametrization is not continuous.
See also \cite[Section 1]{SW}.

\begin{theorem}\label{main}
$\Phi|_{\rm ICA}:{\rm ICA} \to {\rm RM}^\circ$ is not continuous.
\end{theorem}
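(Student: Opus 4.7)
The plan is to argue by contradiction: I assume $\Phi|_{\rm ICA}$ is continuous and derive an inconsistency by transferring the failure illustrated in the proof of Proposition~\ref{continuity} onto ${\rm ICA}$, using the local homeomorphism of $\Pi|_{\rm ICA}$ at the origin.

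Choose open neighborhoods $U_0 \ni {\rm id}$ in ${\rm ICA}$ and $V_0 \ni {\rm id}$ in ${\rm SQS}$ provided by Theorem~\ref{inverse} such that $\Pi|_{U_0} : U_0 \to V_0$ is a homeomorphism, and set
\[
\Theta := \Phi \circ (\Pi|_{U_0})^{-1} : V_0 \to {\rm RM}^\circ,
\]
which is continuous under the assumption. For $f \in V_0$, let $\gamma_0(f) := (\Pi|_{U_0})^{-1}(f)$. The decomposition $\gamma_0(f) = \Theta(f) \circ f$ together with the arc-length condition $|\gamma_0(f)'| \equiv 1$ gives the pointwise identity
\[
|\Theta(f)'(y)| = (f^{-1})'(y), \qquad y \in \mathbb{R}.
\]
Since $L|_{{\rm RM}^\circ}$ is a homeomorphism by Theorem~\ref{biholo} and $\log|\Theta(f)'| = \operatorname{Re} L(\Theta(f))$, the map $f \mapsto \log (f^{-1})'$ from $V_0$ into ${\rm BMO}_{\mathbb{R}}^*(\mathbb{R})$ would then be continuous; equivalently, by Proposition~\ref{topequiv}, inversion $f \mapsto f^{-1}$ from $V_0$ into ${\rm SQS}$ would be continuous.

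Next I produce a sequence in $V_0$ where inversion fails. Take $f := f_k$ and $\ell_n \to {\rm id}$ from the proof of Proposition~\ref{continuity}, with $k$ chosen large enough that $f_k \in V_0$ (possible since $\|\log f_k'\|_* \to 0$), and set $f_n := \ell_n \circ f$. Right-invariance of the Carleson distance $d_c$ gives $d_c(f_n, f) = d_c(\ell_n, {\rm id}) \to 0$, so $f_n \to f$ in $V_0$. On the other hand, $f_n^{-1} = f^{-1} \circ \ell_n^{-1}$, and right-invariance again yields
\[
d_c(f_n^{-1}, f^{-1}) = d_c\bigl(f^{-1} \circ \ell_n^{-1} \circ f,\; {\rm id}\bigr).
\]
A direct computation parallel to the one for $f \circ \ell_n^{-1} \circ f^{-1}$ in the proof of Proposition~\ref{continuity} shows that $\log(f^{-1} \circ \ell_n^{-1} \circ f)'$ is piecewise constant with jumps of size $\log(1+1/k)$ across each endpoint of the thin interval $I_n := \bigl(-\tfrac{k+1}{k},\, -\tfrac{n(k+1)}{(n+1)k}\bigr)$. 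A test interval straddling an endpoint of $I_n$ then gives
\[
\bigl\|\log(f^{-1} \circ \ell_n^{-1} \circ f)'\bigr\|_* \geq \tfrac{1}{2}\log(1 + 1/k) > 0
\]
uniformly in $n$. By Proposition~\ref{topequiv} this forbids $f_n^{-1} \to f^{-1}$ in ${\rm SQS}$, contradicting the inversion continuity deduced above.

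The main technical step is the piecewise linear computation yielding the uniform BMO lower bound in the last display: one has to track carefully how the breakpoints of $f_k$ and $\ell_n^{-1}$ interact under the conjugation $f^{-1} \circ \ell_n^{-1} \circ f$, and verify that although $|I_n| \to 0$ as $n \to \infty$, the BMO norm of its logarithmic derivative remains bounded away from zero. Once this estimate is in hand, the local homeomorphism from Theorem~\ref{inverse} together with continuity of $L|_{{\rm RM}^\circ}$ close the argument in a routine way, and the companion assertion about $\rho$ in Theorem~\ref{1.4} follows immediately, since $\rho$ is obtained from $\Phi|_{\rm ICA}$ by pre- and post-composing with the homeomorphisms $L^{\pm 1}$ and $\Pi^*$.
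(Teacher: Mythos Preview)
Your argument is correct and runs closely parallel to the paper's, but with one genuine twist worth noting. The paper works entirely in Bers coordinates: it picks $([0],[\nu])\in{\rm RM}^\circ$ where $\Phi$ fails to be continuous, pulls it back to a point $([\mu_1],[\mu_2])\in{\rm ICA}$ using Lemma~\ref{origin}, and then uses the local homeomorphism $\Pi|_U$ to manufacture a sequence $([\varepsilon_n]\ast[\mu_1],[\varepsilon'_n]\ast[\mu_2])$ in ${\rm ICA}$ whose $\Phi$-images $([0],[\varepsilon'_n]\ast[\nu]\ast[\varepsilon_n]^{-1})$ do not converge. You instead exploit the arc-length identity $|\Theta(f)'|=(f^{-1})'$ to reduce continuity of $\Phi|_{\rm ICA}$ to continuity of the inversion $f\mapsto f^{-1}$ on a neighborhood of ${\rm id}$ in ${\rm SQS}$, and then refute the latter by a direct computation of $\log(f^{-1}\circ\ell_n^{-1}\circ f)'$ mirroring the one in Proposition~\ref{continuity}. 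This reformulation is clean and has the minor advantage of bypassing Lemma~\ref{origin}; on the other hand, the paper's version keeps everything visibly inside the Bers coordinate picture and makes the parallel with Proposition~\ref{continuity} more transparent. Both proofs rest on the same two pillars: the local homeomorphism of $\Pi|_{\rm ICA}$ from Theorem~\ref{inverse}, and the explicit piecewise-linear example $f_k$, $\ell_n$.
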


\begin{proof}
By Proposition \ref{continuity}, we can find a point $([0],[\nu]) \in {\rm RM}^\circ$
arbitrarily close to $([0],[0])$ at which $\Phi$ is not continuous.
Let $(\Phi|_{\rm ICA})^{-1}([0],[\nu])=([\mu_1],[\mu_2])$, which is in ${\rm ICA}$.
Since $(\Phi|_{\rm ICA})^{-1}$ is continuous at $([0],[0])$ by Lemma \ref{origin},
we may assume that $([\mu_1],[\mu_2])$ is sufficiently close to $([0],[0])$.

By Theorems \ref{CM} and \ref{inverse}, there are neighborhoods
$U$ and $V$ of the origin in ${\rm ICA}$ and ${\rm SQS}$ respectively such that
$\Pi|_{U}:U \to V$ is a homeomorphism preserving the origin.
We can take $([\mu_1],[\mu_2])$ so that it is in $U$. Then, $\Pi([\mu_1],[\mu_2])=([\mu_1],[\mu_1]) \in V$.
We choose a sequence $[\varepsilon_n] \in T_b$ as in the proof of Proposition \ref{continuity},
and consider $([\varepsilon_n] \ast [\mu_1],[\varepsilon_n] \ast[\mu_1]) \in V$. This sequence is mapped to
$([\varepsilon_n] \ast [\mu_1],[\varepsilon'_n] \ast[\mu_2]) \in U$ by $(\Pi|_U)^{-1}$, where $[\varepsilon'_n]$
also tends to $[0]$ as $n \to \infty$ by the continuity of $(\Pi|_U)^{-1}$.
Thus, we obtain a sequence $([\varepsilon_n] \ast [\mu_1],[\varepsilon'_n] \ast[\mu_2])$ in ${\rm ICA}$
converging to $([\mu_1],[\mu_2])$. 

The image of this sequence under $\Phi$ is
$$
([0],[\varepsilon'_n] \ast[\mu_2] \ast [\mu_1]^{-1} \ast [\varepsilon_n] ^{-1})
=([0],[\varepsilon'_n] \ast[\nu] \ast [\varepsilon_n] ^{-1}) \in {\rm RM}^\circ.
$$
As we have seen in the proof of Proposition \ref{continuity}, $[\nu] \ast [\varepsilon_n] ^{-1}$
does not converge to $[\nu]$ in $T_b$, and neither does $[\varepsilon'_n] \ast[\nu] \ast [\varepsilon_n] ^{-1}$.
This implies that
$\Phi([\varepsilon_n] \ast [\mu_1],[\varepsilon'_n] \ast[\mu_2])$ does not converge to
$\Phi([\mu_1],[\mu_2])=([0],[\nu])$. Hence, $\Phi|_{\rm ICA}$ is not continuous.
\end{proof}

In fact, the mapping in question was given on the space of BMO functions in \cite{KNS}.
Let $\Pi^*:{\rm BE} \to {\rm SQS}$ be the projection defined by $([\mu_1],[\mu_2]) \mapsto ([\mu_2],[\mu_2])$
in parallel to $\Pi$. The identification ${\rm RM} \cong T_b \cong {\rm SQS}$ is realized by
$\Pi^*|_{\rm RM}:{\rm RM} \to {\rm SQS}$. Then,
$$
\Pi^* \circ \Phi|_{\rm ICA}:{\rm ICA} \to {\rm SQS}
$$
is the correspondence of conformal welding homeomorphisms to arc-length para\-metri\-zations.
Its conjugation by $L$, that is,
$$
\rho=L \circ \Pi^* \circ \Phi \circ L^{-1}|_{i{\rm BMO}_{\mathbb R}(\mathbb R)^\circ} 
:i{\rm BMO}_{\mathbb R}(\mathbb R)^\circ \to {\rm BMO}_{\mathbb R}^*(\mathbb R)
$$
is the one we deal with.  

\begin{corollary}\label{another}
$\rho:i{\rm BMO}_{\mathbb R}(\mathbb R)^\circ \to {\rm BMO}_{\mathbb R}^*(\mathbb R)$ is not continuous.
\end{corollary}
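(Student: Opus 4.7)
The plan is to reduce the discontinuity of $\rho$ to that of $\Phi|_{\rm ICA}$ established in Theorem \ref{main}. The map $\rho$ is built by conjugating $\Pi^* \circ \Phi|_{\rm ICA}$ with $L$ on the source and with $L$ on the target. By Theorem \ref{biholo} together with Corollary \ref{real-analytic}, the maps $L^{-1}|_{i{\rm BMO}_{\mathbb R}(\mathbb R)^\circ}\colon i{\rm BMO}_{\mathbb R}(\mathbb R)^\circ \to {\rm ICA}$ and $L|_{\rm SQS}\colon {\rm SQS} \to {\rm BMO}_{\mathbb R}^*(\mathbb R)$ are real-analytic homeomorphisms. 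Hence continuity of $\rho$ is equivalent to continuity of $\Pi^* \circ \Phi|_{\rm ICA}\colon {\rm ICA} \to {\rm SQS}$, and it suffices to prove the latter fails.

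The key observation is that $\Pi^*|_{\rm RM}\colon {\rm RM} \to {\rm SQS}$ is a homeomorphism, so composing with $\Pi^*$ cannot cure a discontinuity on ${\rm RM}^\circ$. Indeed, in the Bers coordinates introduced in Section 4, ${\rm RM}$ is the complex-analytic submanifold $\{([0],[\mu]) : [\mu] \in T_b\}$ and ${\rm SQS}$ is the real-analytic diagonal $\{([\mu],[\mu]) : [\mu] \in T_b\}$, and under these identifications with $T_b$, the map $\Pi^*|_{\rm RM}$ is nothing but the identity $T_b \to T_b$. In particular, its restriction $\Pi^*|_{{\rm RM}^\circ}\colon {\rm RM}^\circ \to \Pi^*({\rm RM}^\circ) \subset {\rm SQS}$ is a homeomorphism onto its image, with inverse given by $([\mu],[\mu]) \mapsto ([0],[\mu])$.

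Now suppose for contradiction that $\rho$ is continuous at some point. Then $\Pi^* \circ \Phi|_{\rm ICA}$ is continuous at the corresponding point $\gamma_0 \in {\rm ICA}$. Write $\Phi|_{\rm ICA}(\gamma_0) = h_0 \in {\rm RM}^\circ$. Since $\Pi^*|_{{\rm RM}^\circ}$ is a homeomorphism onto an open subset of ${\rm SQS}$ containing $\Pi^*(h_0)$, we may invert it in a neighborhood to obtain
\[
\Phi|_{\rm ICA} = (\Pi^*|_{{\rm RM}^\circ})^{-1} \circ (\Pi^* \circ \Phi|_{\rm ICA})
\]
locally around $\gamma_0$, and conclude that $\Phi|_{\rm ICA}$ is continuous at $\gamma_0$. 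Applying this reasoning at the specific point $([\mu_1],[\mu_2])$ constructed in the proof of Theorem \ref{main}, where $\Phi|_{\rm ICA}$ was shown to be discontinuous, yields the desired contradiction, establishing that $\rho$ is not continuous.

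There is essentially no obstacle beyond bookkeeping: the serious work has been done in Theorem \ref{main}, and the present corollary is a formal transport of that result through the homeomorphisms $L$, $L^{-1}$ and $\Pi^*|_{{\rm RM}^\circ}$. The only point worth checking carefully is that $\Pi^*|_{\rm RM}$ is a genuine homeomorphism (and not merely a bijection), which is immediate from the Bers coordinate description together with the fact that ${\rm RM}$, ${\rm SQS}$, and $T_b$ all carry the same underlying topology by construction.
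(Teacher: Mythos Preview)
Your proof is correct and follows essentially the same approach as the paper: deduce the discontinuity of $\rho$ from Theorem \ref{main} by transporting through the homeomorphisms $L$ and $\Pi^*|_{\rm RM}$. The paper's proof is a single sentence invoking exactly these facts; your version spells out the details (in particular why $\Pi^*|_{\rm RM}$ is a homeomorphism via the Bers coordinates), with only a minor infelicity in the phrase ``suppose for contradiction that $\rho$ is continuous at some point'' where you really mean continuity at the specific point $L([\mu_1],[\mu_2])$.
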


\begin{proof} 
This follows from Theorem \ref{main} and the facts that $L$ and $\Pi^*|_{\rm RM}$ are homeomorphisms.
\end{proof}

We also have the following properties on the bijection $J$. As mentioned before in Section 5,
this answers the problem raised in \cite[Problem 5.4]{FHS} negatively.

\begin{theorem}\label{mapL}
The bijection
$$
J:{\rm BMO}_{\mathbb R}^*(\mathbb R) \times i{\rm BMO}_{\mathbb R}(\mathbb R)^\circ \to 
L({\rm CA}) \subset {\rm BMO}(\mathbb R)
$$
defined by $J(u,iv)=Q_u(iv)=u+iP_{\gamma_u}(v)$
is not continuous but locally bounded. 
\end{theorem}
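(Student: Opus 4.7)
The plan is to treat the two claims separately: local boundedness follows quickly from the uniform operator-norm bound of Section~6, while discontinuity reduces to an explicit one-parameter example that exploits the failure of right translation to be continuous on $T_b \cong {\rm SQS}$.

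For local boundedness I would fix $(u_0, iv_0)$ and choose a small BMO-neighborhood on which $\Vert u \Vert_*$ and $\Vert v \Vert_*$ are bounded. Because $L|_{{\rm SQS}}$ is a homeomorphism onto ${\rm BMO}_{\mathbb R}^*(\mathbb R)$ by Corollary~\ref{real-analytic}, this BMO-neighborhood corresponds under $u \mapsto \gamma_u$ to a Carleson-distance neighborhood of $\gamma_{u_0}$; in particular $d_c(\gamma_u, {\rm id})$ stays uniformly bounded there. Lemma~\ref{general} then provides a uniform bound on the operator norm $\Vert P_{\gamma_u} \Vert$, and hence $\Vert J(u, iv) \Vert_* \leq \Vert u \Vert_* + \Vert P_{\gamma_u} \Vert \cdot \Vert v \Vert_*$ stays bounded on the neighborhood.

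For discontinuity at $(0, iv)$ with a suitably chosen small $iv$, I would reuse the piecewise linear family $\ell_n$ from the proof of Proposition~\ref{continuity}, defined by $\ell_n(x) = x$ for $x \geq 0$ and $\ell_n(x) = \tfrac{n}{n+1}x$ for $x < 0$. Setting $u_n = \log \ell_n'$ gives $\gamma_{u_n} = \ell_n$ and $\Vert u_n \Vert_* \to 0$. Take $v = \varepsilon \chi_{(-1, 0)}$ with $\varepsilon > 0$ so small that $iv \in i{\rm BMO}_{\mathbb R}(\mathbb R)^\circ$, which is possible because $L({\rm CA})$ is an open neighborhood of the origin. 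A direct computation yields $v \circ \ell_n = \varepsilon \chi_{(-(n+1)/n,\, 0)}$ and hence the defect $P_{\gamma_{u_n}}(v) - v = \varepsilon \chi_{(-(n+1)/n,\, -1)}$. Since the BMO norm of the characteristic function of any bounded interval equals $\tfrac{1}{2}$ (attained on the interval of twice its length with the same center), this defect has BMO norm $\varepsilon/2$ independent of $n$. Consequently $\Vert J(u_n, iv) - J(0, iv) \Vert_* \geq \tfrac{\varepsilon}{2} - \Vert u_n \Vert_*$ is bounded below by a positive constant for large $n$, so $J$ fails to be continuous at $(0, iv)$.

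The main obstacle is selecting the test function $v$ so that the defect $v \circ \gamma_{u_n} - v$ persists in BMO norm even as $\gamma_{u_n} \to {\rm id}$. The characteristic function of a bounded interval is the natural choice precisely because its BMO norm is invariant under the scale of the underlying interval, so the arbitrarily small stretching of the left endpoint by $\ell_n$ carves out a new characteristic-function defect of fixed BMO size $1/2$, regardless of how tiny the stretched interval becomes. This is the same mechanism responsible for the discontinuity of right translation in $T_b \cong {\rm SQS}$ seen in Proposition~\ref{continuity} and for the non-continuity of $\Phi|_{\rm ICA}$ in Theorem~\ref{main}.
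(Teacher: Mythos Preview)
Your proof is correct and follows essentially the same strategy as the paper. For local boundedness, the paper factors $P_{\gamma_u}=P_{\gamma_{u_0}}\circ P_{\gamma_u\circ\gamma_{u_0}^{-1}}$ and applies Proposition~\ref{uniformPh} to the second factor, whereas you appeal to Lemma~\ref{general} via the Carleson distance; these are equivalent. For discontinuity, the paper chooses $v_0=\log f_k'$ (which is itself a constant multiple of $\chi_{(-(k+1)/k,\,0)}$), rewrites $J(u,iv_0)=i\log(\gamma_{v_0}\circ\gamma_u)'+(1-i)u$, and then invokes the non-convergence $\gamma_{v_0}\circ\gamma_{u_n}\nrightarrow\gamma_{v_0}\circ\gamma_{u_0}$ established in Proposition~\ref{continuity}. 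Your version is slightly more direct: by taking $v=\varepsilon\chi_{(-1,0)}$ and computing $v\circ\ell_n-v=\varepsilon\chi_{(-(n+1)/n,\,-1)}$ explicitly, you avoid the detour through the group law on ${\rm SQS}$ and isolate the scale-invariance of $\Vert\chi_I\Vert_*=\tfrac12$ as the sole mechanism. The underlying example and computation are the same in both approaches.
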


\begin{proof}
All the properties of $J$ stem from those of the composition operator $P_{\gamma_u}$.
For a given $u_0 \in {\rm BMO}_{\mathbb R}^*(\mathbb R)$, 
we consider any $u \in {\rm BMO}_{\mathbb R}^*(\mathbb R)$ that is sufficiently close to $u_0$.
For the normalized strongly quasisymmetric homeomorphisms $\gamma_u$ and $\gamma_{u_0}$ in
${\rm SQS}$, we set $f=\gamma_u \circ \gamma_{u_0}^{-1}$, which also belongs to ${\rm SQS}$.
Then, $\Vert \log f' \Vert_* \to 0$ as $u \to u_0$.
By Proposition \ref{uniformPh}, if $\Vert \log f' \Vert_* \leq \tau_0$, then $\Vert P_f \Vert \leq C_0$.
Hence, the operator norm of $P_{\gamma_u}=P_{\gamma_{u_0}} \circ P_f$ is bounded by $C_0 \Vert P_{\gamma_{u_0}} \Vert$
for any $u$ in some neighborhood of $u_0$. This implies that $J$ is locally bounded.

For $iv_0 \in i{\rm BMO}_{\mathbb R}(\mathbb R)^\circ$ whose BMO norm is small so that $e^{v_0}$ is an $A_\infty$-weight, 
we can take the normalized strongly quasisymmetric homeomorphism $\gamma_{v_0}:\mathbb R \to \mathbb R$ such that
$\log \gamma_{v_0}'=v_0$. Then,
\begin{equation}\label{realv}
J(u,iv_0)=u+ i \log \gamma_{v_0}'\circ \gamma_u=i \log(\gamma_{v_0} \circ \gamma_u)'+(1-i)u.
\end{equation}
We may choose $\gamma_{v_0} \in {\rm SQS}$ so that it corresponds to $[\nu] \in T_b$ in the proof of
Theorem \ref{continuity}. We may also choose $\gamma_{u_n} \in {\rm SQS}$
for each $n \in \mathbb N$ so that $\gamma_{u_n} \circ \gamma_{u_0}^{-1}$ corresponds to 
$[\varepsilon_n] \in T_b$. 
Then, $[\varepsilon_n] \to [0]$ but $[\nu] \ast [\varepsilon_n]^{-1} \nrightarrow [\nu]$ as $n \to \infty$.
This implies that $\gamma_{u_n} \to \gamma_{u_0}$ but $\gamma_{v_0} \circ \gamma_{u_n} \nrightarrow \gamma_{v_0} \circ \gamma_{u_0}$,
namely, $u_n \to u_0$ but $\log (\gamma_{v_0} \circ \gamma_{u_n})' \nrightarrow \log (\gamma_{v_0} \circ \gamma_{u_0})'$.
By equation (\ref{realv}), this shows that $J$ is not continuous at $(u_0,iv_0)$.
\end{proof}

This also implies discontinuous properties of the composition operator 
$P_f:{\rm BMO}(\mathbb R) \to {\rm BMO}(\mathbb R)$ for $f \in {\rm SQS}$.

\begin{corollary}\label{last}
There are $v_0 \neq 0$ in ${\rm BMO}(\mathbb R)$ with an arbitrarily small norm and
a sequence $f_n$ in ${\rm SQS}$ converging to $\rm id$ such that
$\Vert P_{f_n}(v_0)-v_0 \Vert_*$ does not converge to $0$ as $n \to \infty$.
In particular, the operator norm $\Vert P_{f_n}-I \Vert$ does not necessarily converge to $0$ 
when $f_n \to {\rm id}$.
\end{corollary}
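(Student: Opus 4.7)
The plan is to deduce this directly from Theorem \ref{mapL} by fixing the base point at $u_0 = 0$ and reading off the failure of continuity of $J$ as a statement about the composition operator $P_{f_n}$ with $f_n = \gamma_{u_n}$. The work has essentially been done: this corollary is just a repackaging of the last part of the proof of Theorem \ref{mapL}.

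First I would select $v_0 \in {\rm BMO}_{\mathbb R}(\mathbb R)$ with arbitrarily small BMO norm such that $e^{v_0}$ is an $A_\infty$-weight and $\gamma_{v_0}$ corresponds to the nontrivial class $[\nu] \neq [0]$ used in the proof of Proposition \ref{continuity}; such $v_0 \neq 0$ with $\Vert v_0 \Vert_*$ arbitrarily small exists because $\Vert \log f_k' \Vert_* \to 0$ as $k \to \infty$ for the Lehto-type maps $f_k$ constructed there. Then I would apply Theorem \ref{mapL} at $(0, iv_0)$ to produce a sequence $u_n \to 0$ in ${\rm BMO}_{\mathbb R}^*(\mathbb R)$ satisfying
$$
J(u_n, iv_0) = u_n + i\, v_0 \circ \gamma_{u_n} \not\to iv_0 = J(0, iv_0)
$$
in ${\rm BMO}(\mathbb R)$.

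Next I would set $f_n = \gamma_{u_n} \in {\rm SQS}$. Since $L(f_n) = u_n \to 0$ in BMO and $L|_{\rm SQS}$ is a homeomorphism onto ${\rm BMO}_{\mathbb R}^*(\mathbb R)$ by Corollary \ref{real-analytic}(1), it follows that $f_n \to \mathrm{id}$ in ${\rm SQS}$. Combined with $u_n \to 0$, the nonconvergence above forces
$$
P_{f_n}(v_0) = v_0 \circ f_n \not\to v_0
$$
in ${\rm BMO}(\mathbb R)$, which is the first assertion with this particular $v_0$. For the operator-norm statement, I would observe that if $\Vert P_{f_n} - I \Vert \to 0$, then $\Vert P_{f_n}(v_0) - v_0 \Vert_* \leq \Vert P_{f_n} - I \Vert \cdot \Vert v_0 \Vert_* \to 0$, contradicting what has just been shown. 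There is no genuine obstacle here beyond being careful with the dictionary between $J$, $P_f$, and the identification $L|_{\rm SQS}\colon {\rm SQS} \cong {\rm BMO}_{\mathbb R}^*(\mathbb R)$; all the real work sits inside Theorem \ref{mapL}.
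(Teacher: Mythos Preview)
Your proposal is correct and takes essentially the same approach as the paper: set $u_0=0$, take $f_n=\gamma_{u_n}$ and $v_0$ from the proof of Theorem \ref{mapL}, and read off $P_{f_n}(v_0)=v_0\circ\gamma_{u_n}\nrightarrow v_0$ from $J(u_n,iv_0)\nrightarrow J(0,iv_0)$ together with $u_n\to 0$. The only stylistic quibble is that you phrase this as ``applying Theorem \ref{mapL} at $(0,iv_0)$'' when what you actually need is the explicit construction from its \emph{proof} (the theorem statement alone does not locate the point of discontinuity), but you make clear you are aware of this.
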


\begin{proof}
For $u_0=0$, choose $f_n=\gamma_{u_n}$ and $v_0$ as in the proof of Theorem \ref{mapL}. Then,
$P_{f_n}(v_0)=\log (\gamma_{v_0} \circ \gamma_{u_n})'-\log \gamma_{u_n}'$ does not converge to
$v_0=\log \gamma_{v_0}'$.
\end{proof}

\end{document}